\documentclass[reqno]{amsart}
\makeatletter
\@namedef{subjclassname@2010}{%
  \textup{2010} Mathematics Subject Classification}
\makeatother

\setlength{\textheight}{22cm}
\setlength{\textwidth}{13cm}

\usepackage{amssymb,amscd, amsmath}
\usepackage{mathtools,centernot,tensor}

 \usepackage{amsmath}       
 \usepackage{amsthm}        
 \usepackage{amssymb}       

\usepackage{enumerate} 
\usepackage{hyperref}





\newcounter{homework}
\newcounter{project}

 \renewcommand{\theequation}{\thesection.\arabic{equation}}         
  \renewcommand{\thehomework}{\thesection.\arabic{homework}}
   
 \makeatletter                                                      
    \@addtoreset{equation}{section} 
    \@addtoreset{footnote}{section} 
    \@addtoreset{homework}{section} 
    \@addtoreset{project}{section} 
    {\hspace*{\fill}$\lrcorner$\endgraf\endgroup\end{trivlist}}     

 \def\newprooflikeenvironment#1#2#3#4{
      \newenvironment{#1}[1][]{
          \refstepcounter{equation}                                 
          \begin{proof}[{\rm\csname#4\endcsname{#2~\theequation}
          \@ifnotempty{##1}{\the\thm@notefont \ (##1)}\csname#4\endcsname{.}}]
          \def\qedsymbol{#3}}
         {\end{proof}}}                                             
 \makeatother                                                       
 \theoremstyle{plain}               
 \newtheorem{theorem}[equation]{Theorem}                            
 \newtheorem*{lemma*}{Lemma}                                        
 \newtheorem*{theorem*}{Theorem}                                    
 \newtheorem{lemma}[equation]{Lemma}                                
\newtheorem{corollary}[equation]{Corollary}                        
 \newtheorem{proposition}[equation]{Proposition}                    

 \theoremstyle{definition} 
\newtheorem{definition}[equation]{Definition}
\newtheorem{example}[equation]{Example}



 \newprooflikeenvironment{remark}{Remark}{$\diamond$}{textbf}       
 \newprooflikeenvironment{digression}{Digression}{$\diamond$}{textbf}       

\theoremstyle{remark} 



 \DeclareFontFamily{OT1}{pzc}{}                                 
 \DeclareFontShape{OT1}{pzc}{m}{it}{<-> s * [1.100] pzcmi7t}{}  
 \DeclareMathAlphabet{\mathpzc}{OT1}{pzc}{m}{it}                
 \DeclareFontFamily{U}{manual}{}                                
 \DeclareFontShape{U}{manual}{m}{n}{ <->  manfnt }{}            
 \newcommand{\manfntsymbol}[1]{
    {\fontencoding{U}\fontfamily{manual}\selectfont\symbol{#1}}}


 \newcommand{\bbar}[1]{\setbox0=\hbox{$#1$}\dimen0=.2\ht0 \kern\dimen0 \overline{\kern-\dimen0 #1}}


 \DeclareMathOperator{\End}{\ensuremath{\mathcal{E}\kern-.125em\mathpzc{nd}}}

 
 \DeclareMathOperator{\Hom}{\mathcal{H}\kern-.125em\mathpzc{om}}

 \DeclareMathOperator{\Proj}{\mathcal{P}\kern-.125em\mathpzc{roj}}

 \newcommand{\udot}{\ensuremath{{\lower .183333em \hbox{\LARGE \kern -.05em$\cdot$}}}}



 \newcommand{\cA}{\mathcal{A}}

 \newcommand{\cG}{\mathcal{G}}

 \newcommand{\cL}{\mathcal{L}}
 
 \newcommand{\cN}{\mathcal{N}}


 \newcommand{\R}{\mathbb{R}}

  \newcommand{\Z}{\mathbb{Z}}




\setlength{\marginparwidth}{.8in}


 \openout0=lastupdated.html
 \write0{\today}
 \closeout0

\allowdisplaybreaks




\newcommand{\vectpair}[2]{(\ovr{#1}, \ovr{#2})}
\newcommand{\rcm}{\stackrel{rc}{\prec}}
\newcommand{\wrcm}{\stackrel{\ rc}{\tensor[_w]{\prec}{^w}}}

\def\ton#1{1,\ldots, {#1}}

\def\xton#1#2#3{{#1}_1 {#3} {#1}_2{#3} \ldots{#3} {#1}_{#2}}

\usepackage{tikz-cd}
\usepackage{cite}
\usepackage{standalone}
\newcommand\numberthis{\addtocounter{equation}{1}\tag{\theequation}}

\newcommand\ovr[1]{\overrightarrow{#1}}

\newcommand\wN{\widetilde{N}}

\newcommand\wL{\widetilde{L}}

\begin{document}

\title[convolution of gamma and negative binomial variables]{Stochastic orders for convolution of heterogeneous gamma and negative binomial random variables}

\author{Ying Zhang}

\address[Y. Zhang]{Susquehanna International Group, Bala Cynwyd, PA 19004}

\email{zhangyingmath@gmail.com}

\date{\today}

\subjclass[2010]{primary 60E15, secondary 60K10}

\keywords{convolution order, usual stochastic order, weak majorization, arrangement increasing, gamma convolution, negative binomial convolution, mixture distribution}




\begin{abstract}
Convolutions of independent random variables often arise in a natural way in many applied problems. In this article, we compare convolutions of two sets of gamma (negative binomial) random variables in the convolution order and the usual stochastic order in a unified set-up, when the shape and scale parameters satisfy a partial order called reverse-coupled majorization order. This partial order is an extension of the majorization order from vectors to pairs of vectors which also incorporates the arrangement information. The results established in this article strengthen those known in the literature. 
\end{abstract}

\maketitle

\section{Introduction}

Convolutions of independent random variables often arise in a natural way in many applied areas including applied probability, reliability theory, actuarial science, nonparametric goodness-of-fit testing, and operations research. Since the distribution theory is quite complicated when the convolution involves INID (independent and non-identical) random variables, it is of great interest to derive bounds and approximations in the form of some stochastic orderings. Typical applications of such bounds could be found in \cite{Diaconis1990}\cite{Boland1994}\cite{Ma2000convex}\cite{Roosta2015schur} and references therein. One of the most commonly used stochastic ordering is the \textit{usual stochastic order}: random variables $X_1$ is said to be smaller than $X_2$ in this order, denoted by $X_1\leq_{st} X_2$ if $X_2$ has the same distribution as $X_1 + Z$, where $Z$ is nonnegative. The fact that the usual stochastic order is useful can readily be seen from its equivalent characterizations \cite{Shaked2007stochastic}:
\begin{align*}
& X_1 \leq_{st} X_2 \iff \forall t, \quad P(X_1\geq t) \leq P(X_2\geq t) \iff \\
& \forall \phi \mbox{ increasing and where the integrals are finite},\quad E[\phi(X_1)] \leq E[\phi(X_2)] 
\end{align*}

A closely related order is the \textit{convolution order}: $X_1\leq_{conv} X_2$ if $X_2\stackrel{st}{=} X_1 + Z$, where $Z$ is nonnegative and independent of $X_1$. As is evident from the definition, the convolution order is a stronger order that implies the usual stochastic order. The convolution order is useful for the purpose of comparison of experiments, see \cite{Shaked2003convolution}. As we shall see in this article, the convolution order is a natural order to study for convolutions of infinitely divisible random variables, such as the gamma random variable and its discrete analogue, the negative binomial random variable. 


It is well known that gamma distribution is one of the most commonly used distributions in statistics, reliability and life testing. Stochastic orderings for convoluions of heterogeneous gamma random variables are extensively studied in the literature, see \cite{Proschan1976}\cite{nevius1977}\cite{Bock1987}\cite{Boland1994}\cite{Bon1999ordering}\cite{Kochar1999dispersive}\cite{Korwar2002} \cite{Khaledi2004}\cite{Yu2009}\cite{Zhao2009mean}\cite{Mao2010}\cite{Zhao2010pascal}\cite{Kochar2011tail}\cite{Li2013stochastic}\cite{Zhao2014ordering} and the references therein. Let $G_{\alpha, \beta}$ be a gamma random variable with shape parameter $\alpha$ and scale parameter $\beta$. Then, in its standard form $G_{\alpha, \beta}$ has the probability density function
\[
f_{G}(t)= \begin{cases}
\frac{\beta^{\alpha}t^{\alpha-1}}{\Gamma(\alpha)}e^{-\beta t}, & t\geq 0\\
0, & t < 0
\end{cases}
\]

$G_{\alpha, \beta}$ is a flexible family of distributions with decreasing, constant, and increasing failure rates when $0 < \alpha < 1$ , $\alpha = 1$ and $\alpha > 1$, respectively. Given $\ovr{\alpha_1}=(\alpha_{11}, \alpha_{12}, \ldots, \alpha_{1n})$, $\ovr{\alpha_2}=(\alpha_{21}, \alpha_{22}, \ldots, \alpha_{2n})$, $\ovr{\beta_1}=(\beta_{11}, \beta_{12}, \ldots, \beta_{1n})$, $\ovr{\beta_2}=(\beta_{21}, \beta_{22}, \ldots, \beta_{2n})$, we adopt the following notation to denote convolutions of gamma random variables 
\[
\cG_r := \cG_{\ovr{\alpha_r}, \ovr{\beta_r}}:=\sum_{i=1}^n G_{\alpha_{ri}, \beta_{ri}}, \quad r=1,2
\]
In this article, unless otherwise stated, random variables with different indices in a summation are assumed independent. A negative binomial random variable $N_{\alpha, p}$ has the following distribution: 
\begin{equation}\label{nbrv-density}
P(N=k)=\begin{pmatrix}k+\alpha-1\\
k
\end{pmatrix}p^{\alpha}q^k, \ \ \ \    k = 0,1,2,3,4,\ldots
\end{equation}
where $\alpha > 0$ is called the \textit{shape} parameter, $p$ is called the \textit{success probability}, $0<p, q = 1-p < 1$. The geometric distribution is an important special case of the negative binomial distribution (when the shape parameter $\alpha=1$). Denote $\ovr{p_1}=(p_{11}, p_{12}, \ldots, p_{1n})$, $\ovr{p_2}=(p_{21}, p_{22}, \ldots, p_{2n})$, where $0<p_{ri}<1$, and
\[
\cN_r := \cN_{\ovr{\alpha_r}, \ovr{p_r}}:=\sum_{i=1}^n N_{\alpha_{ri}, p_{ri}}, \quad r = 1,2
\]

In this article, we study linear and arrangement constraints on the parameter $(\ovr{\alpha}, \ovr{\beta})$ that lead to $\leq_{st}$ and $\leq_{conv}$ on gamma (negative binomial) convolutions. We propose a general framework that conveniently unifies and generalizes some known results such as \cite[Theorem III.11.E.8.a]{Marshall2011}, the gamma case of \cite[Theorem 3.3]{Xu2011inequalities}, the AI (\textit{arrangement increasing}) property for tails of gamma convolutions (see the historic remark at \ref{rem:gamma-tail}), the usual stochastic orders for Pascal variables (a.k.a. negative binomial variables with integral shape parameters) in \cite{Boland1994}\cite{Zhao2010hazard} and \cite{Zhao2010pascal}. We prove the following results in Section \ref{sed:conv-order} and Section \ref{sec:st-order}, where the notations are defined in Section \ref{sec:definitions} and except $\wrcm$ is in Definition \ref{def:rcm}:

\begin{theorem}\label{intro-main-result}
\begin{enumerate}
\item \[
\vectpair{\alpha_1}{\beta_1} \geq^a \vectpair{\alpha_2}{\beta_2} \implies \cG_1 \leq_{conv} \cG_2
\]

\item 
\begin{align*}
& \vectpair{\alpha_2}{p_2} =^a (\ovr{\alpha_2}_{\uparrow}, \ovr{p_2}_{\downarrow}),\\
& \ovr{\alpha_1} \prec_w \ovr{\alpha_2}, \quad \ovr{\beta_1} \prec^w \ovr{\beta_2} \implies \cG_1\leq_{conv} \cG_2
\end{align*}

\item
\[
\vectpair{\alpha_1}{\beta_1} \wrcm \vectpair{\alpha_2}{\beta_2} \implies \cG_1\leq_{conv}\cG_2
\]

\item 
\begin{align*}
& \vectpair{\alpha_2}{p_2} =^a (\ovr{\alpha_2}_{\uparrow}, \ovr{p_2}_{\downarrow}),\\
& \ovr{\alpha_1} \prec_w \ovr{\alpha_2}, \quad \log\ovr{\beta_1} \prec^w \log\ovr{\beta_2} \implies \cG_1\leq_{st} \cG_2
\end{align*}

\item
\[
(\ovr{\alpha_1}, \log\ovr{\beta_1}) \wrcm (\ovr{\alpha_2}, \log\ovr{\beta_2}) \implies \cG_1\leq_{st}\cG_2\]
\end{enumerate}
\end{theorem}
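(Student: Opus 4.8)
\emph{Plan of proof.} Set $b_{ri}=\log\beta_{ri}$, so the hypothesis reads $(\ovr{\alpha_1},\ovr{b_1})\wrcm(\ovr{\alpha_2},\ovr{b_2})$. The plan is to unwind this order, via the description in Definition~\ref{def:rcm}, into a finite chain of \emph{elementary moves} carrying $(\ovr{\alpha_2},\ovr{b_2})$ to $(\ovr{\alpha_1},\ovr{b_1})$, each affecting at most two of the $n$ coordinates and keeping the affected pair reverse-coupled, and to check that each move makes the gamma convolution smaller in $\leq_{st}$. Since $\leq_{st}$ is transitive and is preserved under convolution with a fixed independent summand, one may at each stage freeze the untouched summands and then compose, obtaining $\cG_1\leq_{st}\cG_2$. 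The elementary moves are: a relabelling of the index set (which changes neither $\cG_1$ nor $\cG_2$, each being a convolution); a coordinatewise decrease of one entry of $\ovr\alpha$ or increase of one entry of $\ovr\beta$ (the ``weak'' part); and a reverse-coupled Robin~Hood transfer on two coordinates $i,j$ --- a mass transfer contracting $\{\alpha_i,\alpha_j\}$ with $\ovr\beta$ held fixed, or contracting $\{b_i,b_j\}$ with $\ovr\alpha$ held fixed, performed when the coordinate of the smaller rate carries the larger shape. I would also record the standard facts that $G_{\alpha,\beta}$ is stochastically increasing in $\alpha$ and decreasing in $\beta$, that $cG_{\alpha,\beta}\overset{d}{=}G_{\alpha,\beta/c}$ for $c>0$, that $G_{\alpha_1,\beta}+G_{\alpha_2,\beta}\overset{d}{=}G_{\alpha_1+\alpha_2,\beta}$, and that for a fixed shape $G_{\alpha,\beta'}\leq_{conv}G_{\alpha,\beta}$ whenever $\beta'\geq\beta$ (with an explicit infinitely divisible residual).

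The coordinatewise moves are immediate from stochastic monotonicity and closure of $\leq_{st}$ under convolution. A transfer on the shapes is nearly as easy: if the affected pair is $(a,\lambda),(a',\mu)$ before and $(a-\delta,\lambda),(a'+\delta,\mu)$ after, then writing $G_{a,\lambda}\overset{d}{=}G_{a-\delta,\lambda}+G_{\delta,\lambda}$ and $G_{a'+\delta,\mu}\overset{d}{=}G_{a',\mu}+G_{\delta,\mu}$ with independent summands exhibits the ``before'' sum as the ``after'' sum plus an independent copy of $G_{\delta,\lambda}\geq 0$; so such a transfer decreases the convolution even in $\leq_{conv}$, hence in $\leq_{st}$, and no arrangement hypothesis is needed here.

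The load-bearing case is a transfer on the log-scales. Now the shapes $a\geq a'$ are frozen (reverse-coupled: larger shape at the smaller rate $\lambda\leq\mu$) and the rates move from $\lambda\leq\mu$ to $\lambda e^{\epsilon}\leq\mu e^{-\epsilon}$, with $\epsilon>0$ not overshooting equalization, so $\mu e^{-\epsilon}\geq\lambda$. Using $cG_{\alpha,\beta}\overset{d}{=}G_{\alpha,\beta/c}$, the ``after'' sum equals $e^{-\epsilon}X+e^{\epsilon}Y$ and the ``before'' sum equals $X+Y$ in distribution, with $X=G_{a,\lambda}$ and $Y=G_{a',\mu}$ independent. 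Since $\lambda\leq\lambda e^{\epsilon}$ and $\mu e^{-\epsilon}\leq\mu$, I would peel off nonnegative independent residuals $Z_1,Z_2$ with $X\overset{d}{=}e^{-\epsilon}X+Z_1$ and $e^{\epsilon}Y\overset{d}{=}Y+Z_2$; both sums then take the form $(e^{-\epsilon}X+Y)+(\text{independent residual})$, and cancelling the common part reduces the claim to $Z_2\leq_{st}Z_1$. A short Laplace-transform computation gives $Z_1$ the transform $\bigl(e^{-\epsilon}+(1-e^{-\epsilon})\tfrac{\lambda}{\lambda+s}\bigr)^{a}$ and $Z_2$ the transform $\bigl(e^{-\epsilon}+(1-e^{-\epsilon})\tfrac{\mu e^{-\epsilon}}{\mu e^{-\epsilon}+s}\bigr)^{a'}$; for integral shapes these are, respectively, the law of a $\mathrm{Binomial}(a,1-e^{-\epsilon})$-indexed sum of i.i.d.\ $\mathrm{Exponential}(\lambda)$ summands and a $\mathrm{Binomial}(a',1-e^{-\epsilon})$-indexed sum of i.i.d.\ $\mathrm{Exponential}(\mu e^{-\epsilon})$ summands, with the \emph{same} thinning probability $1-e^{-\epsilon}$. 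Then $a'\leq a$ gives $\mathrm{Binomial}(a',1-e^{-\epsilon})\leq_{st}\mathrm{Binomial}(a,1-e^{-\epsilon})$, while $\mu e^{-\epsilon}\geq\lambda$ gives $\mathrm{Exponential}(\mu e^{-\epsilon})\leq_{st}\mathrm{Exponential}(\lambda)$, so a simultaneous monotone coupling of the indices and of the summands yields $Z_2\leq Z_1$ pathwise, hence $Z_2\leq_{st}Z_1$. The general (non-integral) shape case follows either by approximation or, more robustly, by showing directly that the density of $e^{-\epsilon}X+e^{\epsilon}Y$ has a single sign change in $t$ as $\epsilon$ increases over $[0,\epsilon_{\max}]$, a variation-diminishing estimate whose kernel is totally positive precisely because $a\geq a'$ and $\lambda e^{\epsilon}\leq\mu e^{-\epsilon}$.

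I expect the log-scale transfer to be the main obstacle: the shape transfers and coordinatewise moves are essentially free, whereas the log-scale transfer consumes the reverse-coupled hypothesis ($a\geq a'$) and the no-overshoot condition essentially, and for general shapes the relevant density is not log-concave in its parameters, so some total-positivity (or coupling-plus-approximation) input is unavoidable. A secondary point needing care --- the analogue of the reduction behind the third assertion above --- is the purely combinatorial verification, from Definition~\ref{def:rcm}, that $(\ovr{\alpha_1},\ovr{b_1})\wrcm(\ovr{\alpha_2},\ovr{b_2})$ really can be realised by such a chain of elementary moves with every transfer performed on a reverse-coupled pair; this is where the arrangement information encoded in $\wrcm$ is used up.
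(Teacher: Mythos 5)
Your plan only engages parts (4)--(5) of the theorem. You set $b_{ri}=\log\beta_{ri}$ at the outset and every move is checked in $\leq_{st}$, but parts (1)--(3) assert the \emph{convolution} order under hypotheses on $\ovr{\beta}$ itself, and your load-bearing step cannot deliver it: after peeling off the self-decomposability residuals you compare $Z_2\leq_{st}Z_1$ and add the common part, which yields only $\leq_{st}$ (the ``before'' sum is not exhibited as the ``after'' sum plus one independent nonnegative summand). The missing ingredient for (1)--(3) is that a plain Robin--Hood transfer on the rates with equal shapes, $(\beta_{1i},\beta_{1j})\prec(\beta_{2i},\beta_{2j})$, already gives $G_{\alpha,\beta_{2i}}+G_{\alpha,\beta_{2j}}\stackrel{st}{=}G_{\alpha,\beta_{1i}}+G_{\alpha,\beta_{1j}}+(\text{independent nonnegative term})$; in the paper this comes from the two-parameter compounding identity (Proposition \ref{compound-negative-binomial-pair} and its gamma corollary in the appendix, an m.g.f.\ computation producing the explicit compound residual), and nothing in your listed facts (scaling, shape additivity, self-decomposability of a single gamma) produces it. Indeed the paper points out that the log-scale hypothesis genuinely cannot be upgraded to $\leq_{conv}$, so the conv-order parts require this separate input; as written, your argument proves at most (4)--(5).

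Two further points. First, your claim that a shape transfer ``needs no arrangement hypothesis'' is false: with rates $(1,10)$ fixed, moving shapes from $(1,1)$ to $(0.5,1.5)$ \emph{decreases} the mean, so no stochastic comparison can hold without the reverse-coupling. The identity you state is also not one: before $=G_{a-\delta,\lambda}+G_{a',\mu}+G_{\delta,\lambda}$ while after $=G_{a-\delta,\lambda}+G_{a',\mu}+G_{\delta,\mu}$, and the comparison of the residuals $G_{\delta,\mu}\leq_{conv}G_{\delta,\lambda}$ is exactly where $\lambda\leq\mu$, i.e.\ condition \eqref{eqn:reverse_pair}, is consumed (this also quietly covers the swap/arrangement moves needed to realize a general relation (i)--(ii) of Definition \ref{def:rcm}, which your list of elementary moves omits -- a joint relabelling of the index set is not the same as interchanging two entries of one vector). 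These are fixable, but as stated they are errors. Second, parts (2) and (4) also require the combinatorial reduction from $\prec_w$, $\prec^w$ plus $\vectpair{\alpha_2}{\beta_2}=^a(\ovr{\alpha_2}_{\uparrow},\ovr{\beta_2}_{\downarrow})$ to $\wrcm$ (Theorem \ref{thm:weak-prec-cond}, proved via T-transforms), which you do not supply; and your non-integral-shape step can be closed without total positivity or approximation by writing each residual as a negative-binomial shape mixture of gammas and invoking Lemma \ref{lem:st-mixture}. For (4)--(5) your direct gamma route (scaling plus self-decomposability) is a legitimate alternative to the paper's strategy, which instead proves everything for negative binomials via p.g.f.\ compounding and transfers to the gamma case through the shape-mixture representation $\cG\stackrel{st}{=}G_{\widetilde{\cL},\beta}$, a route that handles all shapes uniformly and, crucially, also yields the convolution-order statements.
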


In fact, (1) and (2) are useful special cases of (3); (4) is also a special case of (5). The partial order $\wrcm$ is defined on pairs $\vectpair{\alpha}{\beta}\in (\R^n, \R^n)$ in order to capture certain linear and arrangement operations on the parameter space that respects the $\leq_{conv}$ and $\leq_{st}$ order. Theorem \ref{intro-main-result} still holds if we replace $\ovr{\beta_{r}}$ with $\ovr{p_r}$, and $\cG_r$ with $\cN_r$, $r=1,2$. In fact, the negative binomial version of Theorem \ref{intro-main-result} is proved first, and then ``transfered'' to the gamma case with the help of a shape-mixture distribution of a MLR (\textit{monotone likelihood ratio}) family, see Section \ref{sec:section-compound-rv}.

\section{Definitions}\label{sec:definitions}
Given $\overrightarrow{x}=(\xton{x}{n}{,})\in \R^n$, let $\ovr{x}_{\uparrow} = (x_{(1)}, x_{(2)}, \ldots , x_{(n)})$ and\\ $\ovr{x}_{\downarrow} = (x_{[1]}, x_{[2]}, \ldots , x_{[n)]})$ be, respectively, the vectors with components of $\ovr{x}$ arranged in increasing (decreasing) order. For $\overrightarrow{x}, \overrightarrow{y}\in \R^n$, if
\[
\sum_{i=1}^k x_{(i)}\geq \sum_{i=1}^k y_{(i)}, \ \ \ \ k=\ton{n} 
\]
we say $\overrightarrow{x}$ is weakly majorized by $\overrightarrow{y}$ from above, denoted by $\overrightarrow{x}\prec^{w} \overrightarrow{y}$. If
\[
\sum_{i=1}^k x_{[i]}\leq \sum_{i=1}^k y_{[i]}, \ \ \ \ k=\ton{n} 
\]
we say $\overrightarrow{x}$ is weakly majorized by $\overrightarrow{y}$ from below, denoted by $\overrightarrow{x}\prec_{w} \overrightarrow{y}$. If in addition $\sum_{i=1}^n x_i=\sum_{i=1}^n y_i$, we say $\overrightarrow{x}$ is majorized by $\overrightarrow{y}$, denoted by $\overrightarrow{x}\prec \overrightarrow{y}$. It is easy to see
\[
\overrightarrow{x}\prec \overrightarrow{y} \iff \overrightarrow{x}\prec^w \overrightarrow{y}, \quad \overrightarrow{x}\prec_w \overrightarrow{y}
\] 
\[\overrightarrow{x}\prec \overrightarrow{y},\quad \overrightarrow{y}\prec \overrightarrow{x} \iff \ovr{x} \mbox{ is a permutation of } \ovr{y}\]

Given a vector $\ovr{x_1}$, we will use a double subscript $x_{1i}$ to denote the $i$-th coordinate of $\ovr{x_1}$, $i=1,\ldots, n$.

\begin{lemma}\label{T-transformations}
\cite[Section 2.19, Lemma 2]{Hardy1988inequalities}
Given $\ovr{x}\prec \ovr{y}\in \R^n$, up to permutation, assume $\ovr{x} = \ovr{x}_{\uparrow}$, $\ovr{y} = \ovr{y}_{\uparrow}$. There exist finitely many vectors $\ovr{\phi_1}, \ovr{\phi_2}, \ldots, \ovr{\phi_k}$, $k\leq n$, such that:
\begin{enumerate}
	\item $\ovr{\phi_i} = \ovr{\phi_i}_{\uparrow}$, $i=1,\ldots, k$;
    \item $\ovr{x} = \ovr{\phi_1}$, $\ovr{y} = \ovr{\phi_k}$, and $\ovr{\phi_1}\prec \ldots \prec \ovr{\phi_k}$; 
    \item For each $s=1,\ldots, k-1$, there exist coordinates $i < j$ and a constant $\epsilon \geq 0$, where
\begin{align*}
& \phi_{s+1,i} = \phi_{s, i} - \epsilon,\quad \phi_{s+1,j} = \phi_{s, j} + \epsilon\\
& \phi_{s+1, m} = \phi_{s, m}, \quad m \neq i, j
\end{align*} 
\end{enumerate}
\end{lemma}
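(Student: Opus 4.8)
The plan is to build the chain $\ovr{\phi_1},\dots,\ovr{\phi_k}$ by iterating one elementary move of the type in (3), each move strictly decreasing an integer potential that never exceeds $n-1$; the one genuine difficulty is keeping every intermediate vector sorted. Fix the sorted target $\ovr{y}=\ovr{y}_{\uparrow}$, and for $\ovr{z}\in\R^n$ write $s_k(\ovr{z})=\sum_{i=1}^k z_i$ and $\delta_k(\ovr{z})=s_k(\ovr{z})-s_k(\ovr{y})$, so $\delta_0(\ovr{z})=0$ always, and $\delta_n(\ovr{z})=0$ whenever $\ovr{z}$ and $\ovr{y}$ have equal sums. For a sorted $\ovr{z}$ with $\sum_i z_i=\sum_i y_i$ one has $\ovr{z}\prec\ovr{y}$ iff $\delta_k(\ovr{z})\ge 0$ for all $k$. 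Put
\[
P(\ovr{z})=\#\bigl\{\,k\in\{1,\dots,n-1\}\ :\ \delta_k(\ovr{z})>0\,\bigr\},
\]
so $0\le P(\ovr{z})\le n-1$, and (using $\delta_0(\ovr{z})=\delta_n(\ovr{z})=0$) $P(\ovr{z})=0$ iff $\ovr{z}=\ovr{y}$. If $\ovr{x}=\ovr{y}$ we may take $k=1$; so assume $\ovr{x}\ne\ovr{y}$.

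The heart of the argument is a one-step statement: \emph{if $\ovr{z}=\ovr{z}_{\uparrow}$, $\ovr{z}\prec\ovr{y}$, $\ovr{z}\ne\ovr{y}$, then there are indices $i<j$ and $\epsilon>0$ for which the vector $\ovr{z}\,'$ with $z'_i=z_i-\epsilon$, $z'_j=z_j+\epsilon$, $z'_m=z_m$ $(m\ne i,j)$ is sorted and satisfies $\ovr{z}\prec\ovr{z}\,'\prec\ovr{y}$ and $P(\ovr{z}\,')<P(\ovr{z})$.} To prove it, let $\{i,i+1,\dots,j-1\}$ be a maximal block of indices on which $\delta_k(\ovr{z})>0$ (nonempty since $\ovr{z}\ne\ovr{y}$); then $1\le i<j\le n$ and $\delta_{i-1}(\ovr{z})=\delta_j(\ovr{z})=0$. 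Set $\epsilon=\min_{i\le k\le j-1}\delta_k(\ovr{z})>0$. The move preserves the total sum and changes $\delta_k(\ovr{z})$ into $\delta_k(\ovr{z})-\epsilon$ for $i\le k\le j-1$ while leaving all other $\delta_k(\ovr{z})$ fixed; hence every $\delta_k(\ovr{z}\,')\ge 0$ (so $\ovr{z}\,'\prec\ovr{y}$), no deficit increases (so $\ovr{z}\prec\ovr{z}\,'$), and $\delta_k(\ovr{z}\,')=0$ for at least one $k$ in the block (so $P(\ovr{z}\,')<P(\ovr{z})$).

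It remains to check that $\ovr{z}\,'$ is sorted. Every inequality $z'_m\le z'_{m+1}$ except possibly $z_{i-1}\le z'_i$ (when $i>1$) and $z'_j\le z_{j+1}$ (when $j<n$) is immediate from $\ovr{z}=\ovr{z}_{\uparrow}$ and $\epsilon\ge 0$. For the first, $\delta_{i-1}(\ovr{z})=0$ gives $z_{i-1}-y_{i-1}=\delta_{i-1}(\ovr{z})-\delta_{i-2}(\ovr{z})\le 0$, while $\epsilon\le\delta_i(\ovr{z})=z_i-y_i$; hence $z'_i=z_i-\epsilon\ge y_i\ge y_{i-1}\ge z_{i-1}$. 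For the second, $\delta_j(\ovr{z})=0$ gives $z_{j+1}-y_{j+1}=\delta_{j+1}(\ovr{z})\ge 0$, while $\epsilon\le\delta_{j-1}(\ovr{z})=y_j-z_j$; hence $z'_j=z_j+\epsilon\le y_j\le y_{j+1}\le z_{j+1}$. This proves the one-step statement.

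Starting from $\ovr{\phi_1}=\ovr{x}$ and applying it repeatedly produces sorted vectors $\ovr{\phi_1}\prec\ovr{\phi_2}\prec\cdots$, each majorized by $\ovr{y}$, along which $P$ strictly decreases; since $P(\ovr{\phi_1})\le n-1$, after at most $n-1$ moves we arrive at a vector with $P=0$, i.e.\ equal to $\ovr{y}$, and relabelling the list as $\ovr{\phi_1},\dots,\ovr{\phi_k}$ gives $k\le n$ together with (1)--(3). I expect the sortedness in the one-step statement to be the only real obstacle: a ``Robin Hood'' transfer between two arbitrary crossing coordinates generally destroys the sorted order, and the point of choosing $i,j$ as the endpoints of a maximal positive block of the deficit sequence $(\delta_k(\ovr{z}))$ is exactly that it forces the two boundary inequalities above out of the vanishing $\delta_{i-1}(\ovr{z})=\delta_j(\ovr{z})=0$.
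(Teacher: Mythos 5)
Your proof is correct. Note that the paper itself gives no argument for this lemma: it is quoted verbatim from Hardy--Littlewood--P\'olya (Section 2.19, Lemma 2), so there is no in-paper proof to match. Your potential-function argument is a legitimate self-contained substitute for the classical one. The standard HLP/Muirhead proof produces a chain of T-transforms by induction on the number of coordinates where the two vectors differ; your version instead tracks the deficit sequence $\delta_k(\ovr{z})=\sum_{i\le k}(z_i-y_i)$ and decreases the count of strictly positive deficits, which likewise bounds the number of steps by $n-1$ and hence gives $k\le n$. The genuinely valuable extra in your write-up is the maximal-block choice of the transfer indices: it is exactly what guarantees that every intermediate vector stays sorted (condition (1) of the lemma), a point the textbook argument handles only implicitly via relabelling; your boundary checks $z'_i\ge y_i\ge y_{i-1}\ge z_{i-1}$ and $z'_j\le y_j\le y_{j+1}\le z_{j+1}$, both driven by $\delta_{i-1}=\delta_j=0$, are correct, as are the verifications that $\ovr{z}\prec\ovr{z}\,'\prec\ovr{y}$ and that the potential strictly drops.
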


Consider vector pairs $(\ovr{\alpha}, \ovr{p})\in (\R^n, \R^n)$. Let $\pi$ be a permutation on $n$ letters. Obviously, 
\[
\cN = \sum_{i=1}^n N_{\alpha_i, p_i} = \sum_{i=1}^n N_{\alpha_{\pi(i)}, p_{\pi(i)}}
\]
Thus vector pairs modulo the diagonal action of permutations is a natural space to discuss the parameter configurations for convolutions of double-parameter random variables. Call this space $\cA$.  Equivalence in $\cA$ is denoted by $=^a$, in other words, $\vectpair{x_1}{y_1} = ^a \vectpair{x_2}{y_2}$ if there exists a permutation $\pi$ such that $\ovr{x_1} = \pi(\ovr{x_2})$ and $\ovr{y_1} = \pi(\ovr{y_2})$. Following \cite[Section 6.F]{Marshall2011}, we define a partial ordering called the \textit{arrangement order}, $\leq^a$ on $\cA$: $\vectpair{x_1}{y_1} \leq^a \vectpair{x_2}{y_2}$ if there exist finitely many vectors $\ovr{\phi_1}, \ldots, \ovr{\phi_k}$ such that:
\begin{enumerate}
	\item \[
	\vectpair{x_1}{y_1} =^a (\ovr{x_1}_{\uparrow}, \ovr{\phi}_1), \quad (\ovr{x_1}_{\uparrow}, \ovr{\phi}_k) =^a \vectpair{x_2}{y_2} \]

	\item For $s=1,\ldots, k-1$, $\ovr{\phi}_{s+1}$ can be obtained from $\ovr{\phi}_s$ by an interchange of two components of $\ovr{\phi}_s$, the first component is larger than the second.
\end{enumerate}
From the definition, it is easy to see that 
\begin{equation}\label{ai-relation}
(\ovr{x_1}_{\uparrow}, \ovr{y_1}_{\downarrow}) \leq^a \vectpair{x_1}{y_1} \leq^a (\ovr{x_1}_{\uparrow}, \ovr{y_1}_{\uparrow})
\end{equation}
Given a function $f: \cA\to\R$, $f$ is said to be AI (\textit{arrangement increasing}) if $f$ respects the $\leq^a$ order on the arguments. 



Let $X$ be a random variable that is absolutely continuous with respect to the Lebesgue measure in $\R$ or the counting measure in $\Z$. Denote its probability density (mass) function by $f_X(t)$. $X_1$ is said to be less than $X_2$ in the \textit{likelihood ratio order}, denoted by $X_1\leq_{lr}X_2$, if $\frac{f_{X_2}(t)}{f_{X_1}(t)}$ is an increasing function of $t$ over the union of the support of $X_1$ and $X_2$. It is well-known that $\leq_{lr}$ implies $\leq_{st}$, see \cite{Shaked2007stochastic}. 

Gamma and negative binomial variables are \textit{infinitely divisible}, in the sense that 
\begin{equation}\label{eqn:nb-inf-div}
N_{\alpha_1, p} + N_{\alpha_2, p} = N_{\alpha_1 + \alpha_2, p}
\end{equation}
where $N_{\alpha_1, p}$ and $N_{\alpha_2, p}$ are independent. 

\begin{equation}\label{eqn:gamma-inf-div}
G_{\alpha_1, \beta} + G_{\alpha_2, \beta} = G_{\alpha_1 + \alpha_2, \beta}
\end{equation}


\begin{lemma}\label{lem:alpha-beta-lr-order}
\begin{equation}\label{eqn:alpha-lr-order}
0 < \alpha_1 < \alpha_2 \implies N_{\alpha_1, p} \leq_{lr} N_{\alpha_2, p}, \quad G_{\alpha_1, \beta} \leq_{lr} G_{\alpha_2, \beta}
\end{equation}
\[
1 > p_1 > p_2 > 0 \implies N_{\alpha, p_1} \leq_{lr} N_{\alpha, p_2} 
\]
\[
\beta_1 > \beta_2 > 0 \implies G_{\alpha, \beta_1} \leq_{lr} G_{\alpha, \beta_2} 
\]
\end{lemma}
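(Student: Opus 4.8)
The proof of each implication reduces to writing down the relevant density (respectively mass) function and checking by direct computation that the ratio $f_{X_2}/f_{X_1}$ is monotone increasing on the common support of $X_1$ and $X_2$, which is precisely the defining condition of $\leq_{lr}$. I would dispatch the two gamma statements first. Using the displayed density $f_{G_{\alpha,\beta}}(t)=\beta^{\alpha}t^{\alpha-1}e^{-\beta t}/\Gamma(\alpha)$ on $(0,\infty)$, one gets
\[
\frac{f_{G_{\alpha_2,\beta}}(t)}{f_{G_{\alpha_1,\beta}}(t)}=\frac{\Gamma(\alpha_1)}{\Gamma(\alpha_2)}\,\beta^{\alpha_2-\alpha_1}\,t^{\alpha_2-\alpha_1},
\]
which is increasing in $t>0$ since $\alpha_2-\alpha_1>0$, and
\[
\frac{f_{G_{\alpha,\beta_2}}(t)}{f_{G_{\alpha,\beta_1}}(t)}=\Bigl(\tfrac{\beta_2}{\beta_1}\Bigr)^{\alpha}e^{(\beta_1-\beta_2)t},
\]
which is increasing in $t>0$ since $\beta_1-\beta_2>0$. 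Both gamma variables have support $[0,\infty)$, so the union of supports in the definition of $\leq_{lr}$ is just $[0,\infty)$ and no further bookkeeping is needed.

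For the negative binomial statements I would first note that, for every $\alpha>0$, the mass function \eqref{nbrv-density} is strictly positive on all of $\{0,1,2,\dots\}$, so the common support is again the whole of this set. Writing $\binom{k+\alpha-1}{k}=\Gamma(k+\alpha)/(\Gamma(\alpha)\,k!)$, I would verify monotonicity of $r(k):=f_{X_2}(k)/f_{X_1}(k)$ by examining the one-step quotient $r(k+1)/r(k)$. For the shape comparison the factor $p^{\alpha_2-\alpha_1}$ and $k!$ play no role and
\[
\frac{r(k+1)}{r(k)}=\frac{\Gamma(k+1+\alpha_2)/\Gamma(k+\alpha_2)}{\Gamma(k+1+\alpha_1)/\Gamma(k+\alpha_1)}=\frac{k+\alpha_2}{k+\alpha_1}>1
\]
because $\alpha_2>\alpha_1>0$; here I use only the functional equation $\Gamma(x+1)=x\Gamma(x)$. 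For the success-probability comparison the combinatorial factor and $k!$ cancel entirely, leaving $r(k)=(p_2/p_1)^{\alpha}(q_2/q_1)^{k}$, whose one-step quotient is $q_2/q_1>1$ since $p_1>p_2$ forces $q_1=1-p_1<1-p_2=q_2$. In each case $r$ is increasing, which is exactly $X_1\leq_{lr}X_2$.

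There is no genuine obstacle in this lemma: the only two points deserving a word of care are (i) correctly identifying the common support, so that monotonicity is tested on the right index set, and (ii) in the discrete cases, reducing ``monotone in $k$'' to ``one-step quotient $\ge 1$,'' which is immediate precisely because all masses are strictly positive. Everything else is a one-line algebraic simplification from the explicit formulas, and these four facts are exactly the building blocks needed downstream together with the infinite divisibility relations \eqref{eqn:nb-inf-div} and \eqref{eqn:gamma-inf-div}.
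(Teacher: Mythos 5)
Your computations are correct and constitute exactly the ``direct verification'' the paper alludes to: the paper omits the proof of Lemma \ref{lem:alpha-beta-lr-order}, stating only that the check is immediate from the explicit densities, and your ratio calculations (including the one-step quotient reduction in the discrete cases) are the intended argument. Nothing further is needed.
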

We omit the proof here since the verification is direct. \eqref{eqn:alpha-lr-order} says that negative binomial (gamma) random variables form a MLR (\textit{monotone likelihood ratio}) family in the shape parameter or scale parameter. This property is very useful in dealing with mixture distributions:
\begin{lemma}
\label{lem:st-mixture}
\cite[1.A.6]{Shaked2007stochastic}
Suppose $\Theta(\theta)$ is a family of distributions parametrized by $\theta$, where $\theta_1 \leq \theta_2 \implies \Theta(\theta_1) \leq_{st} \Theta(\theta_2)$. Let $Y_r = \Theta(X_r)$ be mixtures of $\Theta(\theta)$ distributions where $\theta$ is weighted by a random variable $X_r$, $r=1,2$. Then
\[X_1\leq_{st} X_2\implies Y_1\leq_{st} Y_2\]
\end{lemma}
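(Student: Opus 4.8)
The plan is to use the test-function characterization of the usual stochastic order recalled in the introduction: $U \leq_{st} V$ if and only if $E[\phi(U)] \leq E[\phi(V)]$ for every increasing $\phi$ for which both integrals are finite. So fix such a $\phi$ and set $g(\theta) := E[\phi(\Theta(\theta))]$, the expectation of $\phi$ under the component law $\Theta(\theta)$. The first step is to observe that $g$ is increasing: if $\theta_1 \leq \theta_2$ then $\Theta(\theta_1) \leq_{st} \Theta(\theta_2)$ by hypothesis, and applying the characterization to the increasing function $\phi$ gives $g(\theta_1) = E[\phi(\Theta(\theta_1))] \leq E[\phi(\Theta(\theta_2))] = g(\theta_2)$.

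The second step is the tower/Fubini identity $E[\phi(Y_r)] = E[g(X_r)]$ for $r = 1,2$. This just unwinds the defining statement that the law of $Y_r$ is the mixture $\int \Theta(\theta)\, dF_{X_r}(\theta)$: conditioning on the value of the mixing variable $X_r$ and interchanging the order of integration yields $E[\phi(Y_r)] = \int E[\phi(\Theta(\theta))]\, dF_{X_r}(\theta) = \int g(\theta)\, dF_{X_r}(\theta) = E[g(X_r)]$.

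The third step combines the previous two. Since $g$ is increasing and $X_1 \leq_{st} X_2$, the characterization applied to $g$ gives $E[g(X_1)] \leq E[g(X_2)]$, hence $E[\phi(Y_1)] \leq E[\phi(Y_2)]$. As $\phi$ was an arbitrary increasing function with finite integrals, the characterization in the converse direction yields $Y_1 \leq_{st} Y_2$. (Alternatively, one could run a coupling argument: Strassen's theorem gives $\widehat X_1 \leq \widehat X_2$ a.s.\ with the correct marginals, and the hypothesis on $\Theta$ gives, for each $\theta$, a monotone coupling of the component laws; splicing these along $\widehat X_1, \widehat X_2$ produces $\widehat Y_1 \leq \widehat Y_2$ a.s. I would present the test-function version as the main proof since it avoids any measurable-selection bookkeeping.)

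The only point needing a little care — and the part I would flag as the main (mild) obstacle — is measurability and integrability: one needs $\theta \mapsto g(\theta)$ measurable for $E[g(X_r)]$ to be defined, and the interchange of integrals in the second step to be legitimate. Measurability of $g$ is free because $g$ is monotone, and the integrability/interchange issue is handled exactly as in the proof of the characterization of $\leq_{st}$ itself: first establish the claim for bounded increasing $\phi$, where all quantities are finite and Fubini applies, and then pass to a general increasing $\phi$ by the truncations $\phi_M := (\phi \wedge M) \vee (-M)$ together with monotone convergence.
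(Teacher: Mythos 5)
Your proof is correct, and it is essentially the standard conditioning/test-function argument behind the cited result: the paper itself gives no proof of this lemma, quoting it directly from \cite[1.A.6]{Shaked2007stochastic}, whose proof proceeds exactly as you do (make $g(\theta)=E[\phi(\Theta(\theta))]$ increasing, then condition on the mixing variable). A minor simplification you could note: it suffices to take $\phi$ to be the indicators $\mathbf{1}_{[t,\infty)}$, which are bounded, so the truncation step in your final paragraph can be dispensed with entirely.
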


\section{Shape Mixtures of Gamma and Negative Binomial Distributions}\label{sec:section-compound-rv}

Define $\wN_{\alpha, p}$ to be the negative binomial variable $N_{\alpha, p}$ ``shifted'' by $\alpha$:
\[
\wN_{\alpha, p} = \alpha + N_{\alpha, p}
\]

\begin{lemma}\label{shifted-nb-inf-div}
\[
\wN_{\alpha_1 + \alpha_2, p} = \wN_{\alpha_1, p} + \wN_{\alpha_2, p}
\]
\end{lemma}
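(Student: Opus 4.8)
The plan is to reduce the statement to the infinite divisibility relation \eqref{eqn:nb-inf-div} for the ordinary negative binomial law. All equalities here are in distribution, and since the two summands carry distinct indices the standing convention lets us take $N_{\alpha_1,p}$ and $N_{\alpha_2,p}$ independent. Unwinding the definition $\wN_{\alpha,p}=\alpha+N_{\alpha,p}$, the deterministic shifts simply add, so
\[
\wN_{\alpha_1,p}+\wN_{\alpha_2,p}=(\alpha_1+N_{\alpha_1,p})+(\alpha_2+N_{\alpha_2,p})=(\alpha_1+\alpha_2)+(N_{\alpha_1,p}+N_{\alpha_2,p}).
\]
Applying \eqref{eqn:nb-inf-div} to the bracketed convolution rewrites the right-hand side as $(\alpha_1+\alpha_2)+N_{\alpha_1+\alpha_2,p}$, which is exactly $\wN_{\alpha_1+\alpha_2,p}$ by definition. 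This is the whole argument.

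As a self-contained cross-check one can instead work at the level of transforms. The moment generating function of $N_{\alpha,p}$ is $\bigl(p/(1-qe^{t})\bigr)^{\alpha}$ for $e^{t}<1/q$ (from the generalized binomial series), hence $E\bigl[e^{t\wN_{\alpha,p}}\bigr]=e^{\alpha t}\bigl(p/(1-qe^{t})\bigr)^{\alpha}=\bigl(pe^{t}/(1-qe^{t})\bigr)^{\alpha}$. Multiplying the transforms of $\wN_{\alpha_1,p}$ and $\wN_{\alpha_2,p}$ and using independence adds the exponents, yielding $\bigl(pe^{t}/(1-qe^{t})\bigr)^{\alpha_1+\alpha_2}$, the transform of $\wN_{\alpha_1+\alpha_2,p}$; uniqueness of the transform then closes the argument. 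The same computation runs verbatim with probability generating functions, reading $z^{\alpha}$ formally when $\alpha\notin\Z$.

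There is essentially no obstacle: the statement is a bookkeeping consequence of \eqref{eqn:nb-inf-div}. The only point worth a word of care is that $\wN_{\alpha,p}$ is supported on the shifted lattice $\{\alpha,\alpha+1,\alpha+2,\dots\}$ rather than on $\{0,1,2,\dots\}$, so ``infinite divisibility'' is being invoked in the mild sense of closure of the family under independent sums in the shape parameter $\alpha$ at fixed $p$. It is precisely this closure that makes $\wN_{\alpha,p}$ a convenient mixing family for the shape-mixture arguments of Section \ref{sec:section-compound-rv}.
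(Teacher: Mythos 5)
Your argument is correct and matches the paper's own proof: both simply unwind $\wN_{\alpha,p}=\alpha+N_{\alpha,p}$, collect the deterministic shifts, and invoke \eqref{eqn:nb-inf-div} for the sum of the independent negative binomial parts. The transform cross-check is fine but not needed.
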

\begin{proof}
By \eqref{eqn:nb-inf-div},
\begin{align*}
& \wN_{\alpha_1 + \alpha_2, p} = \alpha_1 + \alpha_2 + N_{\alpha_1 + \alpha_2, p}\\
= & \alpha_1 + N_{\alpha_1, p} + \alpha_2 + N_{\alpha_2, p} = \wN_{\alpha_1, p} + \wN_{\alpha_2, p}
\end{align*}
\end{proof}

Therefore $\wN_{\alpha, p}$ is also infinitely divisible. In the following, we will use $\wN$ to derive some technical results on shape mixtures of gamma and negative binomial random variables. Some statements in this section are proved in the appendix. To start, we will see ``the shifted negative binomial variable $\wN_{\alpha, p}$ stays the same if $p$ increases by a fixed amount and $\alpha$ increases as a certain random variable'':

\begin{proposition}\label{compound-negative-binomial}
Consider a compound random variable $\wN_{\wL, p_1}$, 
\[
P(\wN = \alpha + h + k |\wL = \alpha + h)=\begin{pmatrix}k + h + \alpha - 1\\
k
\end{pmatrix}p_1^{\alpha + h}q_1^k, \ \ \ \    k, h = 0,1,2,3,4,\ldots
\]

${\wL}$ is itself distributed as another shifted negative binomial random variable with shape parameter $\alpha$ and success probability $p_2$. Then $\wN_{{\wL}, p_1}$ has the same distribution as $\wN_{\alpha, p_1p_2}$, with shape parameter $\alpha$ and success probability $p_1p_2$, i.e.
\[\wN_{{\wL}, p_1} \stackrel{st}{=} \wN_{\alpha, p_1p_2}\]

\end{proposition}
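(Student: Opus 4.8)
The plan is to compute the probability generating function (pgf) of $\wN_{\wL,p_1}$ by conditioning on $\wL$, and to recognize the result as the pgf of $\wN_{\alpha,p_1p_2}$. The only ingredient needed is that a negative binomial variable has an explicit pgf: from \eqref{nbrv-density} and the generalized binomial series $\sum_{k\geq 0}\binom{k+\alpha-1}{k}(qz)^k=(1-qz)^{-\alpha}$, one has $\E[z^{N_{\alpha,p}}]=\bigl(\tfrac{p}{1-qz}\bigr)^{\alpha}$ for $z\in[0,1)$ (and by continuity at $z=1$), where $q=1-p$. Since $\wN_{\alpha,p_1p_2}=\alpha+N_{\alpha,p_1p_2}$ and $p_1p_2\in(0,1)$, it suffices to show that $\wN_{\wL,p_1}-\alpha$ has pgf $\bigl(\tfrac{p_1p_2}{1-(1-p_1p_2)z}\bigr)^{\alpha}$.

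Set $H:=\wL-\alpha$; by hypothesis $H$ is distributed as $N_{\alpha,p_2}$, and the displayed conditional mass function says precisely that, conditionally on $\wL=\alpha+h$ (equivalently $H=h$), the variable $K:=\wN-\wL$ is distributed as $N_{\alpha+h,p_1}$. Hence $\wN_{\wL,p_1}-\alpha=H+K$, and conditioning on $H$ gives
\[
\E\bigl[z^{H+K}\bigr]=\E\Bigl[z^{H}\,\E[z^{K}\mid H]\Bigr]
=\E\Bigl[z^{H}\bigl(\tfrac{p_1}{1-q_1z}\bigr)^{\alpha+H}\Bigr]
=\bigl(\tfrac{p_1}{1-q_1z}\bigr)^{\alpha}\,\E\Bigl[\bigl(\tfrac{zp_1}{1-q_1z}\bigr)^{H}\Bigr].
\]
Applying the pgf formula to $H$ with argument $w=\tfrac{zp_1}{1-q_1z}$ (which lies in $[0,1)$ for $z\in[0,1)$, since $w$ is increasing in $z$ and equals $1$ at $z=1$), substituting, and simplifying using the algebraic identity $q_1+p_1q_2=(1-p_1)+p_1(1-p_2)=1-p_1p_2$, the whole expression collapses to $\bigl(\tfrac{p_1p_2}{1-(1-p_1p_2)z}\bigr)^{\alpha}$, which is exactly the pgf of $N_{\alpha,p_1p_2}$. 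By uniqueness of pgfs, $\wN_{\wL,p_1}-\alpha\stackrel{st}{=}N_{\alpha,p_1p_2}$, hence $\wN_{\wL,p_1}\stackrel{st}{=}\wN_{\alpha,p_1p_2}$.

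The computation is elementary and I do not expect a genuine obstacle; the two points requiring a little care are (i) the validity of the generalized binomial expansion at the substituted argument $w$, i.e.\ a convergence check confirming $w\in[0,1)$ on $[0,1)$, and (ii) the bookkeeping that the conditional law of $\wN-\wL$ given $\wL=\alpha+h$ is exactly $N_{\alpha+h,p_1}$ rather than a shift thereof. One could instead avoid generating functions and establish the identity directly by summing $P(H=h)\,P(K=m-h\mid H=h)$ over $h=0,\dots,m$ and invoking the Vandermonde--Chu identity, but the pgf route is shorter and makes the role of the identity $q_1+p_1q_2=1-p_1p_2$ transparent.
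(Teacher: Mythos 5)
Your proposal is correct and takes essentially the same route as the paper: condition on $\wL$, insert the explicit negative binomial generating function, and simplify using $q_1+p_1q_2=1-p_1p_2$ to recognize the generating function of the parameter-$(\alpha,p_1p_2)$ variable. The only (cosmetic) difference is that the paper works with the moment generating function of the shifted variable $\wN$ directly, while you subtract $\alpha$ and use the probability generating function of the integer-valued part, which if anything makes the uniqueness step cleaner.
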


\begin{remark}
When $L$ is a negative binomial distribution with shape parameter $\alpha$, 
\[\wN_{\wL, p} = \wL + N_{\wL, p} = \alpha + L + N_{\alpha + L, p}\]

Therefore it is possible to suppress notations such as $\wN_{\wL, p}$ and only use the usual negative binomial variables. We choose to work with the ``shifted'' version since they are more succinct. 
\end{remark}

By \eqref{eqn:nb-inf-div}, negative binomial variables are additive in the shape parameter when the scale parameters are the same, therefore Proposition \ref{compound-negative-binomial} can be extended to convolutions of negative binomial variables:
\begin{proposition}\label{prop:nb-mix-more}
Let $\widetilde{\cN} = \sum_{i=1}^n \wN_{\alpha_i, p_i}$, then \[
\widetilde{\cN} \stackrel{st}{=} \wN_{\widetilde{\cL}, p}
\]
where $p > \max_{i=1,\ldots, n} p_i$, $p'_i =\frac{p_i}{p}$, $\widetilde{\cL} = \sum_{i=1}^n \wL_{\alpha_i, p'_i}$. 
\end{proposition}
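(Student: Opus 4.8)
The plan is to reduce to Proposition \ref{compound-negative-binomial} coordinate by coordinate and then assemble the pieces using the additivity of shifted negative binomial variables in the shape parameter (Lemma \ref{shifted-nb-inf-div}). Fix $p$ with $\max_{i} p_i < p < 1$ and set $p'_i = p_i/p \in (0,1)$ for each $i$. Applying Proposition \ref{compound-negative-binomial} with $p_1 = p$, $p_2 = p'_i$, and shape parameter $\alpha_i$, and noting $p_1 p_2 = p_i$, we get $\wN_{\wL_{\alpha_i, p'_i}, p} \stackrel{st}{=} \wN_{\alpha_i, p_i}$ for each $i = 1, \ldots, n$, where $\wL_{\alpha_i, p'_i}$ is a shifted negative binomial with shape $\alpha_i$ and success probability $p'_i$. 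Since random variables carrying distinct indices are taken independent, we may realize all of these on one probability space with the $\wL_{\alpha_i, p'_i}$ mutually independent and, conditionally on $(\wL_{\alpha_i, p'_i})_i$, the variables $\wN_{\wL_{\alpha_i, p'_i}, p}$ conditionally independent. Summing gives $\widetilde{\cN} \stackrel{st}{=} \sum_{i=1}^n \wN_{\wL_{\alpha_i, p'_i}, p}$.

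Next I would condition on the event $\{\wL_{\alpha_i, p'_i} = \ell_i : i = 1,\ldots,n\}$. Conditionally, $\sum_i \wN_{\wL_{\alpha_i, p'_i}, p}$ is a convolution of independent shifted negative binomials $\wN_{\ell_i, p}$, and iterating Lemma \ref{shifted-nb-inf-div} this convolution equals $\wN_{\sum_i \ell_i, p}$ in distribution; in particular the conditional law depends on the $\ell_i$ only through $\sum_i \ell_i$. Hence $\sum_i \wN_{\wL_{\alpha_i, p'_i}, p}$ is precisely the shape mixture obtained by first drawing $\widetilde{\cL} = \sum_i \wL_{\alpha_i, p'_i}$ and then, given $\widetilde{\cL} = m$, drawing $\wN_{m, p}$, i.e. $\sum_i \wN_{\wL_{\alpha_i, p'_i}, p} \stackrel{st}{=} \wN_{\widetilde{\cL}, p}$. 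Combining with the previous display yields $\widetilde{\cN} \stackrel{st}{=} \wN_{\widetilde{\cL}, p}$. (Equivalently, one can run this by induction on $n$: the base case $n = 1$ is Proposition \ref{compound-negative-binomial}, and the inductive step conditions on the two shape variables and uses $\wN_{m', p} + \wN_{m_n, p} = \wN_{m' + m_n, p}$ from Lemma \ref{shifted-nb-inf-div}.)

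The only delicate point is the bookkeeping for the iterated randomization: one must check that the simultaneous replacement of each $\wN_{\alpha_i, p_i}$ by its compound representation $\wN_{\wL_{\alpha_i, p'_i}, p}$ can be carried out while preserving the relevant (conditional) independence, and that after conditioning the convolution of the $\wN_{\ell_i, p}$ collapses to a single shifted negative binomial depending only on $\sum_i \ell_i$. This last fact, which is exactly Lemma \ref{shifted-nb-inf-div}, is what makes the mixing variable the plain sum $\widetilde{\cL}$ rather than the full vector $(\wL_{\alpha_i, p'_i})_i$; everything else is a direct application of Proposition \ref{compound-negative-binomial}.
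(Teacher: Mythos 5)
Your proof is correct and follows essentially the same route as the paper: apply Proposition \ref{compound-negative-binomial} to each summand to write $\wN_{\alpha_i,p_i}\stackrel{st}{=}\wN_{\wL_{\alpha_i,p'_i},p}$, then use the additivity of shifted negative binomials in the shape parameter (Lemma \ref{shifted-nb-inf-div}, i.e.\ Equation \eqref{eqn:nb-inf-div}) to collapse the sum of mixtures into the single mixture $\wN_{\widetilde{\cL},p}$. You merely make explicit the conditioning bookkeeping that the paper leaves implicit in its one-line chain of identities.
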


\begin{proof}
By Proposition \ref{compound-negative-binomial} and Equation \eqref{eqn:nb-inf-div}, 
\begin{align*}
& \widetilde{\cN} = \sum_{i=1}^n \wN_{\alpha_i, p_i} =  \sum_{i=1}^n \wN_{\wL_{\alpha_i, p'_i}, p} = \wN_{\sum_{i=1}^n \wL_{\alpha_i, p'_i}, p} =  \wN_{\widetilde{\cL}, p}
\end{align*}
\end{proof}

Next, consider the sum of two negative binomial mixtures with a common weight variable $\wL$ for the shape parameter, 
\[
\wN_{{\wL}, c_0+\lambda_1}+\wN_{\wL, c_0-\lambda_1}
\]
conditioning on $\wL=\alpha + h$, $\wN_{\alpha + h, c_0+\lambda_1}$ and $\wN_{\alpha + h, c_0 - \lambda_1}$ are independent. Assume ${c_0 \pm \lambda_r}$, $r=1,2$ are constants between $0$ and $1$, 


\begin{proposition}\label{compound-negative-binomial-pair}
\[
\wN_{{\wL}, c_0+\lambda_1} + \wN_{\wL, c_0-\lambda_1} \stackrel{st}{=} \wN_{{\alpha}, c_0+\lambda_2}+\wN_{\alpha, c_0-\lambda_2}, \quad 0<\lambda_2 < \lambda_1 < c_0
\]
where $\wL$ has shape parameter $\alpha$, success probability $p=\frac{c_0^2-\lambda_2^2}{c_0^2-\lambda_1^2}$.

\end{proposition}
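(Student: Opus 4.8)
The plan is to pass everything through generating functions, much as Proposition~\ref{compound-negative-binomial} itself is proved. Since $\wN_{\alpha,p}=\alpha+N_{\alpha,p}$ takes values in $\alpha+\Z_{\geq 0}$, its law is determined by the function $z\mapsto E\big[z^{\wN_{\alpha,p}}\big]$ on $z\in[0,1]$, and from \eqref{nbrv-density} one computes
\[
E\big[z^{\wN_{\alpha,p}}\big]=z^{\alpha}\Big(\frac{p}{1-(1-p)z}\Big)^{\alpha}=g_p(z)^{\alpha},\qquad g_p(z):=\frac{pz}{1-(1-p)z},
\]
where, for $p\in(0,1)$, $g_p$ maps $[0,1]$ increasingly onto $[0,1]$ with $g_p(1)=1$. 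The scalar Proposition~\ref{compound-negative-binomial} is exactly the composition identity $g_{p_1}\circ g_{p_2}=g_{p_1p_2}$; I would run the same mechanism on the pair and reduce the claim to one rational-function identity.

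First I would compute the generating function of the left-hand side. Conditionally on $\wL=\ell$ the two summands $\wN_{\ell,c_0+\lambda_1}$ and $\wN_{\ell,c_0-\lambda_1}$ are independent (the standing assumption before the statement), so the conditional generating function of their sum is $g_{c_0+\lambda_1}(z)^{\ell}g_{c_0-\lambda_1}(z)^{\ell}=h_{\lambda_1}(z)^{\ell}$, where I write $h_\lambda(z):=g_{c_0+\lambda}(z)\,g_{c_0-\lambda}(z)$. Averaging over $\wL\sim\wN_{\alpha,p}$ and applying the displayed formula at the argument $h_{\lambda_1}(z)\in[0,1]$ gives the left-hand generating function $E\big[h_{\lambda_1}(z)^{\wL}\big]=g_p\big(h_{\lambda_1}(z)\big)^{\alpha}$, while the right-hand side is directly $g_{c_0+\lambda_2}(z)^{\alpha}g_{c_0-\lambda_2}(z)^{\alpha}=h_{\lambda_2}(z)^{\alpha}$. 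Both laws live on $2\alpha+\Z_{\geq 0}$, so they coincide once their generating functions do, and the proposition comes down to
\[
g_p\big(h_{\lambda_1}(z)\big)=h_{\lambda_2}(z),\qquad p=\frac{c_0^2-\lambda_2^2}{c_0^2-\lambda_1^2},
\]
plus the preliminary observation that this $p$ lies in $(0,1)$ (which is where the ordering of $\lambda_1,\lambda_2$ is used and which makes $\wL$ a bona fide shifted negative binomial variable).

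To check the identity I would expand
\[
h_\lambda(z)=\frac{(c_0^2-\lambda^2)z^2}{1-2(1-c_0)z+\big[(1-c_0)^2-\lambda^2\big]z^2}=:\frac{(c_0^2-\lambda^2)z^2}{D_\lambda(z)},
\]
using $(1-c_0-\lambda)+(1-c_0+\lambda)=2(1-c_0)$ and $(1-c_0-\lambda)(1-c_0+\lambda)=(1-c_0)^2-\lambda^2$ in the denominator. Substituting $w=h_{\lambda_1}(z)$ into $g_p(w)=pw/(1-(1-p)w)$: the numerator equals $p(c_0^2-\lambda_1^2)z^2/D_{\lambda_1}(z)=(c_0^2-\lambda_2^2)z^2/D_{\lambda_1}(z)$ by the choice of $p$, and the denominator equals $\big(D_{\lambda_1}(z)-(1-p)(c_0^2-\lambda_1^2)z^2\big)/D_{\lambda_1}(z)$; since $(1-p)(c_0^2-\lambda_1^2)=(c_0^2-\lambda_1^2)-(c_0^2-\lambda_2^2)=\lambda_2^2-\lambda_1^2$, subtracting $(\lambda_2^2-\lambda_1^2)z^2$ from $D_{\lambda_1}(z)$ changes only the $z^2$-coefficient, replacing $(1-c_0)^2-\lambda_1^2$ by $(1-c_0)^2-\lambda_2^2$, i.e. turning $D_{\lambda_1}$ into $D_{\lambda_2}$. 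Hence $g_p\big(h_{\lambda_1}(z)\big)=(c_0^2-\lambda_2^2)z^2/D_{\lambda_2}(z)=h_{\lambda_2}(z)$, as needed.

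I do not anticipate a genuine obstacle: the entire content is the observation that the Möbius map $g_p$ carries the product form $h_{\lambda_1}$ to $h_{\lambda_2}$ precisely when $p$ is the ratio $\tfrac{(c_0+\lambda_2)(c_0-\lambda_2)}{(c_0+\lambda_1)(c_0-\lambda_1)}$ of the two products of success probabilities — the pair analogue of $g_{p_1}\circ g_{p_2}=g_{p_1p_2}$ underlying Proposition~\ref{compound-negative-binomial}. The only points demanding a little care are bookkeeping ones: $\alpha$ need not be an integer, so one stays with $z\in[0,1]$ (where the bases $z$, $g_{c_0\pm\lambda_r}(z)$, $h_{\lambda_r}(z)$ all lie in $[0,1]$ and the exponent $\alpha$ is harmless) rather than on the unit circle, and one should verify at the outset that $p\in(0,1)$ so that the mixing variable $\wL$ and the averaging step are legitimate.
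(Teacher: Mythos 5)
Your proposal is correct and is essentially the paper's own argument: the paper likewise conditions on $\wL$, multiplies the two conditional generating functions into a single ratio, recognizes the resulting sum as the generating function of $\wL$ evaluated at that ratio, and finishes with exactly the algebraic simplification you perform (the only cosmetic differences being that the paper works with the moment generating function in $e^{-t}$ where you use the probability generating function in $z\in[0,1]$, and it appeals to Theorem \ref{analytic-mgf} for uniqueness where you invoke determinacy of laws supported on $2\alpha+\Z_{\geq 0}$ by their generating functions). One remark: your parenthetical claim that the stated ordering yields $p\in(0,1)$ is false as written --- with $0<\lambda_2<\lambda_1<c_0$ the displayed $p=\frac{c_0^2-\lambda_2^2}{c_0^2-\lambda_1^2}$ exceeds $1$; this is an inconsistency in the proposition's own statement (in its application in Proposition \ref{majorize-beta} the roles are $\lambda_1\leq\lambda_2$, which does give $p<1$), and your algebraic identity is unaffected by it.
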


For shape mixtures of gamma variables, we have
\begin{proposition}\label{compound-single}
Consider a compound random variable $G_{\wL,\beta}$, where $\wL$ is a shifted negative binomial variable with shape parameter $\alpha$ and successful probability $p$. Then 
\[
G_{\wL, \beta} \stackrel{st}{=} G_{\alpha, p\beta}
\]

\end{proposition}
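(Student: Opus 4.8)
The plan is to compute the probability density of the gamma mixture $G_{\wL,\beta}$ explicitly by summing over the atoms of $\wL$, and to recognize the resulting series as the density of $G_{\alpha,p\beta}$.

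First I would record the law of the weight variable: since $\wL=\wN_{\alpha,p}=\alpha+N_{\alpha,p}$ with $N_{\alpha,p}$ as in \eqref{nbrv-density}, it takes the values $\alpha+k$, $k=0,1,2,\ldots$, with $P(\wL=\alpha+k)=\binom{k+\alpha-1}{k}p^{\alpha}q^{k}$. Conditioning on $\wL=\alpha+k$, the variable $G_{\wL,\beta}$ has the gamma$(\alpha+k,\beta)$ density $\beta^{\alpha+k}t^{\alpha+k-1}e^{-\beta t}/\Gamma(\alpha+k)$ for $t>0$ (and $0$ for $t\le 0$, which already matches $G_{\alpha,p\beta}$). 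Hence, for $t>0$,
\[
f_{G_{\wL,\beta}}(t)=\sum_{k=0}^{\infty}\binom{k+\alpha-1}{k}p^{\alpha}q^{k}\,\frac{\beta^{\alpha+k}t^{\alpha+k-1}}{\Gamma(\alpha+k)}\,e^{-\beta t}.
\]

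The key step is then purely algebraic: writing $\binom{k+\alpha-1}{k}=\Gamma(\alpha+k)/(\Gamma(\alpha)\,k!)$ cancels the factor $\Gamma(\alpha+k)$, and after pulling out the terms that do not depend on $k$ one is left with $\sum_{k\ge 0}(q\beta t)^{k}/k!=e^{q\beta t}$; combining this with $e^{-\beta t}$ gives $e^{-(1-q)\beta t}=e^{-p\beta t}$, so that $f_{G_{\wL,\beta}}(t)=(p\beta)^{\alpha}t^{\alpha-1}e^{-p\beta t}/\Gamma(\alpha)$, which is precisely the density of $G_{\alpha,p\beta}$. Since $0<p<1$ we have $p\beta>0$, so this is a genuine gamma density, and since all summands are nonnegative the interchange of summation and the rearrangement above are justified by Tonelli.

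There is essentially no obstacle here: the whole proof reduces to the exponential-series identity, the only care needed being the bookkeeping between $\Gamma(\alpha+k)$ and $k!$ and the trivial $t\le 0$ case. If one prefers to avoid densities, the same computation can be run with moment generating functions: $E[e^{sG_{\wL,\beta}}]=E[(1-s/\beta)^{-\wL}]=(1-s/\beta)^{-\alpha}\,E[z^{N_{\alpha,p}}]$ with $z=(1-s/\beta)^{-1}$, and substituting the negative binomial probability generating function $(p/(1-qz))^{\alpha}$ collapses the expression to $(1-s/(p\beta))^{-\alpha}$, the m.g.f.\ of $G_{\alpha,p\beta}$, valid for $s$ in a neighbourhood of $0$ (namely $s<p\beta$); this route needs the p.g.f.\ of $N_{\alpha,p}$ and a brief check of the domain of convergence. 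Either way, this is the gamma analogue of Proposition \ref{compound-negative-binomial}, with the scale $1/\beta$ inflated by $1/p$ playing the role of the multiplied success probabilities there.
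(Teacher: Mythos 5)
Your proof is correct, but it takes a different route from the paper. The paper identifies the law of $G_{\wL,\beta}$ through its moment generating function: it conditions on $\wL=\alpha+k$, recognizes the resulting sum as the probability generating function \eqref{mgf-nb} of $\wL$ evaluated at $u=(1-t/\beta)^{-1}$, and simplifies to the m.g.f.\ of $G_{\alpha,p\beta}$ — exactly the computation you sketch as your ``alternative'' route, and the identification of laws from m.g.f.s is justified by the moment-determinacy result (Theorem \ref{analytic-mgf}) quoted in the appendix. Your primary argument instead computes the mixture density pointwise, cancels $\Gamma(\alpha+k)$ against the generalized binomial coefficient, and collapses the series via $\sum_k (q\beta t)^k/k!=e^{q\beta t}$; this is more elementary, needs only Tonelli for the nonnegative interchange, and sidesteps the moment-problem justification entirely, since equal densities give equal laws directly. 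What the paper's m.g.f.\ approach buys is uniformity: the same template (condition on $\wL$, substitute into the p.g.f.) is reused verbatim in Propositions \ref{compound-negative-binomial} and \ref{compound-negative-binomial-pair}, and it extends painlessly to sums of such mixtures, whereas the density manipulation is specific to a single gamma mixture. Both computations check out, including your convergence remark $s<p\beta$ for the m.g.f.\ variant.
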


The following is an analogue of Proposition \ref{prop:nb-mix-more}:
\begin{proposition}\label{prop:gamma-mix-more}
Let $\widetilde{\cG} = \sum_{i=1}^n G_{\alpha_i, \beta_i}$, then 
\[\widetilde{\cG} \stackrel{st}{=} G_{\widetilde{\cL}, \beta}\]
where $\beta > \max_{i=1,\ldots, n} \beta_i$, $p_i =\frac{\beta_i}{\beta}$, $\widetilde{\cL} = \sum_{i=1}^n \wL_{\alpha_i, p_i}$.
\end{proposition}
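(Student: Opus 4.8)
The plan is to mirror the proof of Proposition \ref{prop:nb-mix-more}, with Proposition \ref{compound-single} and the gamma additivity \eqref{eqn:gamma-inf-div} playing the roles that Proposition \ref{compound-negative-binomial} and \eqref{eqn:nb-inf-div} played there. First note that the hypothesis $\beta > \max_i \beta_i$ guarantees $p_i = \beta_i/\beta \in (0,1)$, so each $\wL_{\alpha_i, p_i}$ is a genuine shifted negative binomial variable, and $p_i \beta = \beta_i$.

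The first step is a termwise application of Proposition \ref{compound-single}: for each fixed $i$ it gives $G_{\wL_{\alpha_i, p_i}, \beta} \stackrel{st}{=} G_{\alpha_i, p_i \beta} = G_{\alpha_i, \beta_i}$. Realizing these on a common probability space so that the $n$ compound variables $G_{\wL_{\alpha_i, p_i}, \beta}$ are mutually independent (each is built from its own independent ingredients), and using that independent families with matching one-dimensional marginals have the same joint law, I would conclude $\sum_{i=1}^n G_{\wL_{\alpha_i, p_i}, \beta} \stackrel{st}{=} \sum_{i=1}^n G_{\alpha_i, \beta_i} = \widetilde{\cG}$.

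The second step identifies $\sum_{i=1}^n G_{\wL_{\alpha_i, p_i}, \beta}$ with $G_{\widetilde{\cL}, \beta}$. Condition on $\wL_{\alpha_i, p_i} = \ell_i$ for all $i$; then the summands are independent gamma variables all having the common scale parameter $\beta$, so applying \eqref{eqn:gamma-inf-div} $n-1$ times yields $\sum_i G_{\ell_i, \beta} = G_{\sum_i \ell_i, \beta}$. Averaging over the joint law of $(\wL_{\alpha_1, p_1}, \ldots, \wL_{\alpha_n, p_n})$, whose coordinate sum is by definition $\widetilde{\cL} = \sum_i \wL_{\alpha_i, p_i}$, gives $\sum_i G_{\wL_{\alpha_i, p_i}, \beta} \stackrel{st}{=} G_{\widetilde{\cL}, \beta}$. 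Chaining the two steps proves the proposition.

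The only point requiring care is the conditioning/mixing argument in the second step: one must check that the conditional law of $\sum_i G_{\ell_i, \beta}$ given $\{\wL_{\alpha_i, p_i} = \ell_i\}$ depends measurably on $(\ell_1, \ldots, \ell_n)$ — it does, since the gamma law is continuous in its shape parameter — and that "mix then convolve" coincides with "convolve then mix" when the scale is held fixed, which is exactly the content of \eqref{eqn:gamma-inf-div} combined with Fubini. None of this is deep, so I expect the proof to be a few lines; the substantive work has already been done in Propositions \ref{compound-single} and \ref{prop:nb-mix-more}.
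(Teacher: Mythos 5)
Your proof is correct and follows essentially the same route the paper intends: the paper leaves Proposition \ref{prop:gamma-mix-more} as the direct analogue of Proposition \ref{prop:nb-mix-more}, i.e.\ apply Proposition \ref{compound-single} termwise to write each $G_{\alpha_i,\beta_i}$ as $G_{\wL_{\alpha_i,p_i},\beta}$ and then combine via the additivity \eqref{eqn:gamma-inf-div} at the common scale $\beta$, which is exactly your two steps. Your extra care about the conditioning/mixing interchange is fine but not a point of divergence.
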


\section{The Convolution Order}\label{sed:conv-order}

We recall the following useful fact:

\begin{lemma}\label{conv-additive} 
\cite[Section1.1.D]{Shaked2007stochastic} Suppose $(X_1, X_2)$, $(Y_1, Y_2)$ are pairs of independent random variables, 
\begin{equation*}
X_1\leq_{conv} Y_1,\quad X_2 \leq_{conv} Y_2 \implies X_1 + X_2\leq_{conv} Y_1+Y_2
\end{equation*}
\end{lemma}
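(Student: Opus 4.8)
The plan is to argue directly from the stochastic representation defining the convolution order. By definition, $X_1 \leq_{conv} Y_1$ means there is a nonnegative random variable $Z_1$, independent of $X_1$, with $Y_1 \stackrel{st}{=} X_1 + Z_1$; similarly there is a nonnegative $Z_2$ independent of $X_2$ with $Y_2 \stackrel{st}{=} X_2 + Z_2$. I would set $Z := Z_1 + Z_2$ and show that $Y_1 + Y_2 \stackrel{st}{=} (X_1 + X_2) + Z$, with $Z \geq 0$ and $Z$ independent of $X_1 + X_2$; this is exactly the assertion $X_1 + X_2 \leq_{conv} Y_1 + Y_2$.

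The first step is a reduction to a convenient probability space. Since the law of a sum of two independent random variables depends only on their marginal laws, I may realize $X_1, X_2, Z_1, Z_2$ as four mutually independent random variables on a single product space without altering any relevant distribution: the law of $X_1 + Z_1$ is still that of $Y_1$, the law of $X_2 + Z_2$ is still that of $Y_2$, and the joint law of $(X_1, X_2)$ — hence the law of $X_1 + X_2$ — is unchanged, because $(X_1,X_2)$ was already a pair of independent variables. On this space $(X_1, X_2)$ is a function of the first two coordinates and $(Z_1, Z_2)$ a function of the last two, so $X_1 + X_2$ and $Z_1 + Z_2$ are independent.

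The second step identifies the law of $(X_1 + X_2) + (Z_1 + Z_2)$ by regrouping it as $(X_1 + Z_1) + (X_2 + Z_2)$. By mutual independence, $X_1 + Z_1$ and $X_2 + Z_2$ are independent, so the law of their sum is the convolution of their laws, i.e. the convolution of the law of $Y_1$ with the law of $Y_2$; since $Y_1$ and $Y_2$ are independent by hypothesis, that convolution is precisely the law of $Y_1 + Y_2$. Hence $Y_1 + Y_2 \stackrel{st}{=} (X_1 + X_2) + Z$ with $Z = Z_1 + Z_2 \geq 0$ independent of $X_1 + X_2$, as required.

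The only real subtlety is the bookkeeping of independence: $Z_1$ is assumed independent of $X_1$ but a priori unrelated to $X_2$, and symmetrically for $Z_2$, so one cannot simply manipulate the given random variables in place. The product-space construction in the first step is what legitimizes treating all four as mutually independent, and it is the crux of the argument; once it is set up, the regrouping is immediate. (Alternatively one could argue at the level of Laplace transforms or probability generating functions, writing $\widehat{Y_r} = \widehat{X_r}\,\widehat{Z_r}$ and multiplying the two identities, but the representation argument is cleaner and sidesteps domain-of-definition issues.)
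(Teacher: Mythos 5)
Your argument is correct and complete: the reduction to a single product space with $X_1,X_2,Z_1,Z_2$ mutually independent is exactly the point that needs care, and once it is in place the regrouping $(X_1+X_2)+(Z_1+Z_2)=(X_1+Z_1)+(X_2+Z_2)$ together with the independence of the two summands identifies the law as that of $Y_1+Y_2$, giving the convolution-order representation with $Z=Z_1+Z_2\geq 0$ independent of $X_1+X_2$. Note that the paper itself does not prove this lemma; it simply cites \cite[Section 1.1.D]{Shaked2007stochastic}, and your representation argument is the standard proof of that cited closure property, so there is nothing to reconcile beyond observing that your write-up supplies the details the paper leaves to the reference.
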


We can show the following results on the convolution order:
\begin{proposition}\label{raise-alpha}
\[0 < \alpha_1 < \alpha_2,\quad p > 0 \implies N_{\alpha_1, p} \leq_{conv} N_{\alpha_2, p}\]
\end{proposition}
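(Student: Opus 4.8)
The statement to prove is that $0 < \alpha_1 < \alpha_2$ implies $N_{\alpha_1, p} \leq_{conv} N_{\alpha_2, p}$. By definition of the convolution order, I need to exhibit a nonnegative random variable $Z$, independent of $N_{\alpha_1, p}$, such that $N_{\alpha_2, p} \stackrel{st}{=} N_{\alpha_1, p} + Z$. The plan is to read this off directly from the infinite divisibility of the negative binomial family. Equation~\eqref{eqn:nb-inf-div} says that if $Z := N_{\alpha_2 - \alpha_1, p}$ is taken independent of $N_{\alpha_1, p}$, then $N_{\alpha_1, p} + Z = N_{\alpha_1 + (\alpha_2 - \alpha_1), p} = N_{\alpha_2, p}$ in distribution. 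Since $\alpha_2 - \alpha_1 > 0$, the shape parameter of $Z$ is a legitimate positive shape parameter, so $Z = N_{\alpha_2-\alpha_1,p}$ is a genuine negative binomial random variable; being a negative binomial variable it is supported on $\{0,1,2,\ldots\}$ and hence nonnegative. This is exactly the decomposition required by the definition of $\leq_{conv}$, so the proof is essentially one line once the right $Z$ is named.

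The only point that deserves a sentence of care is that $\alpha_2 - \alpha_1$ need not be an integer, so $Z$ is a negative binomial variable with a non-integral (but positive) shape parameter; this is fine because the density \eqref{nbrv-density} makes sense for every real $\alpha > 0$, using $\binom{k+\alpha-1}{k} = \Gamma(k+\alpha)/(\Gamma(\alpha)\,k!)$, and this is the same generality in which \eqref{eqn:nb-inf-div} is asserted in the excerpt. So there is no genuine obstacle here; the ``hard part'' is purely bookkeeping, namely invoking \eqref{eqn:nb-inf-div} with shape parameters $\alpha_1$ and $\alpha_2 - \alpha_1$ and observing that the convolution-order definition is met verbatim. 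An entirely parallel argument, using \eqref{eqn:gamma-inf-div} in place of \eqref{eqn:nb-inf-div}, would give the gamma analogue $G_{\alpha_1,\beta} \leq_{conv} G_{\alpha_2,\beta}$, should that be wanted alongside.

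I would write the proof in two lines: first set $Z = N_{\alpha_2 - \alpha_1, p}$, independent of $N_{\alpha_1, p}$, note $Z \geq 0$ since $\alpha_2 - \alpha_1 > 0$; then apply \eqref{eqn:nb-inf-div} to conclude $N_{\alpha_1,p} + Z \stackrel{st}{=} N_{\alpha_2, p}$, which is the definition of $N_{\alpha_1,p} \leq_{conv} N_{\alpha_2,p}$. No auxiliary lemma beyond \eqref{eqn:nb-inf-div} is needed, and in particular Lemma~\ref{conv-additive} is not required for this single-summand statement (it will presumably be used afterwards, together with this proposition, to handle convolutions).
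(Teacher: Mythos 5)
Your proof is correct and is essentially the same as the paper's: both take $Z = N_{\alpha_2-\alpha_1,p}$ independent of $N_{\alpha_1,p}$ and invoke the infinite divisibility identity \eqref{eqn:nb-inf-div} to get $N_{\alpha_2,p} \stackrel{st}{=} N_{\alpha_1,p} + Z$ with $Z \geq 0$, which is the definition of $\leq_{conv}$. Your extra remark about non-integral shape parameters is a harmless clarification that the paper leaves implicit.
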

\begin{proof}
This follows directly from \eqref{eqn:nb-inf-div}:
\[N_{\alpha_2, p} = N_{\alpha_1, p} + N_{\alpha_2 - \alpha_1, p} \geq_{conv} N_{\alpha_1, p}\]
\end{proof}

\begin{proposition}\label{lower-beta}
\[p_1 > p_2,\quad \alpha > 0 \implies N_{\alpha, p_1} \leq_{conv} N_{\alpha, p_2}\]
\end{proposition}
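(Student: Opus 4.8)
The claim is that lowering the success probability of a negative binomial variable (keeping the shape fixed) produces a larger variable in the convolution order, i.e. we must exhibit a nonnegative random variable $Z$, independent of $N_{\alpha,p_1}$, with $N_{\alpha,p_2} \stackrel{st}{=} N_{\alpha,p_1} + Z$. The natural strategy is to identify $Z$ explicitly as a shape-mixture of negative binomial variables and then invoke the machinery of Section \ref{sec:section-compound-rv}. Concretely, I would write $p_2 = p_1 \cdot (p_2/p_1)$ with $p' := p_2/p_1 \in (0,1)$ and apply Proposition \ref{compound-negative-binomial}: the shifted variable $\wN_{\alpha, p_2} = \wN_{\alpha, p_1 p'}$ has the same distribution as $\wN_{\wL, p_1}$, where $\wL$ is itself a shifted negative binomial with shape $\alpha$ and success probability $p'$. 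Unwinding the shift, $\wL = \alpha + L$ with $L = N_{\alpha, p'}$, and conditionally on $\wL = \alpha + h$ the variable $\wN_{\wL,p_1}$ is $\alpha + h + N_{\alpha+h, p_1}$.

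The key step is then to decompose this conditional structure so that $N_{\alpha,p_1}$ appears as an independent summand. Conditionally on $\wL = \alpha + h$, use infinite divisibility \eqref{eqn:nb-inf-div} to split $N_{\alpha+h,p_1} = N_{\alpha, p_1} + N_{h, p_1}$ with the two pieces independent; the first piece does not depend on $h$ at all. Therefore, after un-shifting (subtracting the deterministic $\alpha$ from both sides of $\wN_{\alpha,p_2} \stackrel{st}{=} \wN_{\wL,p_1}$), we obtain
\[
N_{\alpha, p_2} \stackrel{st}{=} N_{\alpha, p_1} + Z, \qquad Z := L + N_{L, p_1},
\]
where $L = N_{\alpha, p'}$ and, conditionally on $L = h$, $N_{L,p_1}$ is an independent $N_{h,p_1}$; moreover the whole block $Z$ is independent of the copy of $N_{\alpha,p_1}$ that was peeled off, because the splitting in \eqref{eqn:nb-inf-div} is into independent parts and $L$ was drawn independently. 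Since $Z \geq 0$, this is precisely the definition of $N_{\alpha,p_1} \leq_{conv} N_{\alpha,p_2}$. Equivalently and more slickly, $Z = \wN_{L, p_1}$ in the notation of the section, so the statement reads $\wN_{\alpha,p_2} \stackrel{st}{=} \wN_{\alpha,p_1} + \wN_{L,p_1}$ as a consequence of Lemma \ref{shifted-nb-inf-div} applied to the random shape $\wL = \alpha + L$.

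The main obstacle is purely bookkeeping: making the independence assertions precise. One must be careful that in the identity $\wN_{\wL,p_1} \stackrel{st}{=} \wN_{\alpha,p_2}$ the object on the left is a genuine mixture (the inner negative binomial draw uses the realized value of $\wL$), and then that the further split of the inner draw via \eqref{eqn:nb-inf-div} can be performed on a common probability space where the $N_{\alpha,p_1}$-part is independent of $(\wL, N_{h,p_1})$ jointly. This is standard but should be spelled out, perhaps by constructing the coupling explicitly: draw $L \sim N_{\alpha,p'}$, then independently draw $A \sim N_{\alpha,p_1}$ and (conditionally on $L=h$) $B \sim N_{h, p_1}$; set $Z = L + B$ and check via generating functions or the additivity \eqref{eqn:nb-inf-div} that $A + Z$ has the $N_{\alpha,p_2}$ distribution. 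Alternatively — and this may be the cleanest route for the paper — one can bypass the mixture language entirely and give a two-line proof using probability generating functions: the p.g.f. of $N_{\alpha,p}$ is $\bigl(p/(1-qs)\bigr)^{\alpha}$, and the ratio of the p.g.f.'s for $p_2$ and $p_1$ is
\[
\left(\frac{p_2}{p_1}\cdot\frac{1-q_1 s}{1-q_2 s}\right)^{\alpha},
\]
which one checks is itself the p.g.f. of a nonnegative integer-valued random variable (indeed a compound/mixture as above), so the factorization $G_{N_{\alpha,p_2}}(s) = G_{N_{\alpha,p_1}}(s)\cdot G_Z(s)$ gives the convolution order immediately. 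I would present the p.g.f. argument as the proof and relegate the mixture interpretation to a remark.
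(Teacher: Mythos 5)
Your main argument is correct and is essentially the paper's own proof: shift by $\alpha$, write $\wN_{\alpha,p_2} \stackrel{st}{=} \wN_{\wL,p_1}$ with $\wL=\wN_{\alpha,\,p_2/p_1}$ via Proposition \ref{compound-negative-binomial}, peel off $\wN_{\alpha,p_1}$ using the additivity of Lemma \ref{shifted-nb-inf-div}, and note that the remainder $\wN_{L,p_1}\geq 0$ is independent of it, which is exactly the convolution order. The probability generating function variant you prefer is just a repackaging of the same computation (the ratio of p.g.f.'s is precisely the p.g.f.\ of $\wN_{L,p_1}$, and verifying its nonnegative coefficients for non-integer $\alpha$ amounts to the same mixture or log-expansion argument), so there is no substantive difference in route.
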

\begin{proof}
Shifting by the same constant $\alpha$, it suffices to prove
\[\wN_{\alpha, p_1} \leq_{conv} \wN_{\alpha, p_2}\]
Define $p:=\frac{p_{2}}{p_{1}}$, $\wL=\wN_{\alpha, p}$. By Proposition \ref{compound-negative-binomial}, 
\begin{align*}
& \wN_{\alpha, p_{2}} = \wN_{\wL, p_{1}} = \wN_{\alpha, p_{1}} + \wN_{L, p_{1}}
\end{align*}
where $\wN_{L, p_{1}}$ is $0$ when $L$ takes the value $0$. Obviously, $\wN_{L, p_{1}}$ is always non-negative. Therefore $N_{\alpha, p_{1}}\leq_{conv} N_{\alpha, p_{2}}$. 
\end{proof}

\begin{proposition}\label{majorize-beta}
\[\alpha > 0,\quad (p_{11}, p_{12}) \prec (p_{21}, p_{22}) \implies N_{\alpha, p_{11}} + N_{\alpha, p_{12}} \leq_{conv} N_{\alpha, p_{21}} + N_{\alpha, p_{22}}\]
\end{proposition}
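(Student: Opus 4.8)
The plan is to reduce everything to the shifted negative binomial variables $\wN_{\alpha,\cdot}$ and then exploit the pair mixture identity of Proposition \ref{compound-negative-binomial-pair}. Since $N_{\alpha,p} = \wN_{\alpha,p} - \alpha$ and the convolution order is invariant under adding a common constant (if $Y \stackrel{st}{=} X + Z$ with $Z \geq 0$ independent of $X$, then $Y + c \stackrel{st}{=} (X+c) + Z$, and conversely), it is equivalent to prove
\[
\wN_{\alpha,p_{11}} + \wN_{\alpha,p_{12}} \;\leq_{conv}\; \wN_{\alpha,p_{21}} + \wN_{\alpha,p_{22}}.
\]
First I would bring both parameter pairs into the symmetric form used in Proposition \ref{compound-negative-binomial-pair}. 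Majorization of $2$-vectors forces equal sums, so the two pairs share an arithmetic mean $c_0 = \tfrac12(p_{11}+p_{12}) = \tfrac12(p_{21}+p_{22})$; setting $\lambda_1 = \tfrac12|p_{11}-p_{12}|$ and $\lambda_2 = \tfrac12|p_{21}-p_{22}|$, the majorization hypothesis becomes $0 \leq \lambda_1 \leq \lambda_2 < c_0$ (the strict bound because every success probability lies in $(0,1)$). If $\lambda_1 = \lambda_2$ the two pairs are permutations of one another and the convolutions coincide in law, so the claim is trivial; assume $\lambda_1 < \lambda_2$, and since the order of the summands is irrelevant take $\{p_{11},p_{12}\} = \{c_0-\lambda_1,\, c_0+\lambda_1\}$ and $\{p_{21},p_{22}\} = \{c_0-\lambda_2,\, c_0+\lambda_2\}$.

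The key step is to invoke Proposition \ref{compound-negative-binomial-pair}: there is a shifted negative binomial weight $\wL$ of shape $\alpha$ (with success probability the appropriate ratio in $c_0,\lambda_1,\lambda_2$) such that
\[
\wN_{\alpha, c_0+\lambda_2} + \wN_{\alpha, c_0-\lambda_2} \;\stackrel{st}{=}\; \wN_{\wL,\, c_0+\lambda_1} + \wN_{\wL,\, c_0-\lambda_1}.
\]
Then I would expand the right-hand side. Write $\wL = \alpha + L$ with $L$ a negative binomial variable, $L \geq 0$. Conditioning on $L$ and applying Lemma \ref{shifted-nb-inf-div} gives $\wN_{\alpha+L,\, c_0\pm\lambda_1} = \wN_{\alpha,\, c_0\pm\lambda_1} + \wN_{L,\, c_0\pm\lambda_1}$, and realizing this with the natural coupling in which the two ``base'' terms $\wN_{\alpha, c_0+\lambda_1}$ and $\wN_{\alpha, c_0-\lambda_1}$ are mutually independent and jointly independent of $L$, the right-hand side becomes
\[
\bigl(\wN_{\alpha, c_0+\lambda_1} + \wN_{\alpha, c_0-\lambda_1}\bigr) + \bigl(\wN_{L,\, c_0+\lambda_1} + \wN_{L,\, c_0-\lambda_1}\bigr),
\]
i.e. the convolution with parameters $c_0-\lambda_1, c_0+\lambda_1$ plus a nonnegative term independent of it. Hence $\wN_{\alpha, c_0+\lambda_1} + \wN_{\alpha, c_0-\lambda_1} \leq_{conv} \wN_{\alpha, c_0+\lambda_2} + \wN_{\alpha, c_0-\lambda_2}$, and translating back by $-2\alpha$ yields the proposition.

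The part that needs the most care is the coupling in this last step: because $\wN_{\wL, c_0+\lambda_1}$ and $\wN_{\wL, c_0-\lambda_1}$ are built from the \emph{same} weight $\wL$, one must check that the two shape-$\alpha$ base components can nonetheless be realized as mutually independent and independent of $\wL$ (equivalently, of the leftover nonnegative term), which is exactly what the definition of $\leq_{conv}$ requires; the fact that the two summands inside the leftover term are not independent of each other is harmless. I would also note why a more naive route fails: a $T$-transform within a pair lowers one success probability and raises the other, and by Proposition \ref{lower-beta} these two moves push the corresponding summands in opposite directions for $\leq_{conv}$, so Lemma \ref{conv-additive} cannot be applied coordinatewise — this is precisely the role of the pair-mixture identity. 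Everything else (the reduction to shifted variables, the reparametrization, and the degenerate case) is routine.
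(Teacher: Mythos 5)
Your argument is correct and follows essentially the same route as the paper's own proof: shift by $2\alpha$, rewrite the pairs as $c_0\pm\lambda_1$ and $c_0\pm\lambda_2$, invoke Proposition \ref{compound-negative-binomial-pair} to express the more spread-out pair as a shape mixture over $\wL$, and split off the nonnegative remainder $\wN_{L,\cdot}+\wN_{L,\cdot}$ to conclude the convolution order. Your added care about the coupling (that the shape-$\alpha$ base terms can be taken mutually independent and independent of the leftover term) is a point the paper leaves implicit, but it does not change the argument.
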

\begin{proof}
Shifting by the same constant $2\alpha$, it suffices to prove
\begin{equation}\label{shifted-beta-pair}
\wN_{\alpha, p_{11}} + \wN_{\alpha, p_{12}} \leq_{conv} \wN_{\alpha, p_{21}} + \wN_{\alpha, p_{22}}
\end{equation}
Given $(p_{11}, p_{12}) \prec (p_{21}, p_{22})$, we can find positive constants $0\leq \lambda_1 \leq \lambda_2 < c_0$ such that $p_{11}, p_{12} = c_0\pm \lambda_1$; $p_{21}, p_{22} = c_0\pm \lambda_2$. When $\lambda_1 = \lambda_2$, $N_{\alpha, p_{11}} + N_{\alpha, p_{12}} = N_{\alpha, p_{21}} + N_{\alpha, p_{22}}$ and there is nothing to prove. In the following, we assume $\lambda_1 < \lambda_2$. Following Proposition \ref{compound-negative-binomial-pair}, define $p = \frac{p_{21}p_{22}}{p_{11}p_{12}} = \frac{c_0^2 -\lambda_2^2}{c_0^2 -\lambda_1^2} < 1$. $\wL = \wN_{\alpha, p}$. 
\begin{align*}
& \wN_{\alpha, p_{21}} + \wN_{\alpha, p_{22}} = \wN_{\wL, p_{11}} + \wN_{\wL, p_{12}}\\
= & \wN_{\alpha, p_{11}} + \wN_{\alpha, p_{12}} + \wN_{L, p_{11}} + \wN_{L, p_{12}}
\end{align*}
Obviously, $\wN_{L, p_{11}} + \wN_{L, p_{12}}$ is non-negative, therefore \eqref{shifted-beta-pair} is true.
\end{proof}

\begin{proposition}\label{diff-alpha-majorize-beta}
\begin{align*}
& 0\leq \alpha_1 \leq \alpha_2,\\
& p_{11} \geq p_{12}, p_{21} \geq p_{22}, \quad (p_{11}, p_{12}) \prec (p_{21}, p_{22})\implies \numberthis \label{p-conditions-maj-beta}\\ 
& N_{\alpha_1, p_{11}} + N_{\alpha_2, p_{12}} \leq_{conv} N_{\alpha_1, p_{21}} + N_{\alpha_2, p_{22}} \numberthis \label{eqn:diff-alpha_result}
\end{align*}
\end{proposition}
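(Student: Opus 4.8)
The plan is to pass to the ``shifted'' variables $\wN_{\alpha,p}=\alpha+N_{\alpha,p}$, split the larger-shape summand using the additivity relation of Lemma~\ref{shifted-nb-inf-div}, and then recognize the two resulting pieces as instances of Propositions~\ref{majorize-beta} and~\ref{lower-beta}. Since adding a common constant to both sides preserves $\leq_{conv}$, shifting \eqref{eqn:diff-alpha_result} by $\alpha_1+\alpha_2$ reduces the claim to
\[
\wN_{\alpha_1,p_{11}}+\wN_{\alpha_2,p_{12}}\ \leq_{conv}\ \wN_{\alpha_1,p_{21}}+\wN_{\alpha_2,p_{22}}.
\]
By Lemma~\ref{shifted-nb-inf-div} one has $\wN_{\alpha_2,p_{12}}\stackrel{st}{=}\wN_{\alpha_1,p_{12}}+\wN_{\alpha_2-\alpha_1,p_{12}}$ with independent summands, and likewise $\wN_{\alpha_2,p_{22}}\stackrel{st}{=}\wN_{\alpha_1,p_{22}}+\wN_{\alpha_2-\alpha_1,p_{22}}$, so each side rewrites as a sum of three mutually independent variables (using also the standing independence convention).

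I would then handle the two ``$\alpha_1$-parts'' jointly and the ``excess-shape part'' separately. For the $\alpha_1$-parts, the hypothesis $(p_{11},p_{12})\prec(p_{21},p_{22})$ is exactly the input of Proposition~\ref{majorize-beta} with common shape $\alpha_1$, giving $\wN_{\alpha_1,p_{11}}+\wN_{\alpha_1,p_{12}}\leq_{conv}\wN_{\alpha_1,p_{21}}+\wN_{\alpha_1,p_{22}}$ (up to the harmless shift by $2\alpha_1$). For the excess-shape part, Proposition~\ref{lower-beta} with shape $\alpha_2-\alpha_1$ yields $\wN_{\alpha_2-\alpha_1,p_{12}}\leq_{conv}\wN_{\alpha_2-\alpha_1,p_{22}}$ as soon as $p_{12}\geq p_{22}$; this inequality follows from the arrangement constraints in \eqref{p-conditions-maj-beta}, since writing both pairs in decreasing order majorization of $2$-vectors gives $p_{11}\leq p_{21}$ together with $p_{11}+p_{12}=p_{21}+p_{22}$, whence $p_{12}=p_{21}+p_{22}-p_{11}\geq p_{22}$. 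Grouping the first two summands on each side against the third summand (these two groups are independent on each side) and applying Lemma~\ref{conv-additive} produces the stated inequality, which after shifting back by $\alpha_1+\alpha_2$ is \eqref{eqn:diff-alpha_result}.

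Finally I would dispose of the degenerate cases, all immediate: if $p_{11}=p_{21}$ the two parameter pairs coincide and both sides are equal; if $\alpha_1=0$ the $\alpha_1$-parts vanish and the statement reduces to Proposition~\ref{lower-beta}; if $\alpha_1=\alpha_2$ the excess-shape part vanishes and the statement reduces to Proposition~\ref{majorize-beta}. I do not expect a genuine obstacle here — the argument is an assembly of results already in hand — and the only point demanding care is orienting the $2$-vector majorization correctly, so that it is Proposition~\ref{lower-beta}, which moves $p$ \emph{downward}, and not a would-be ``increase $p$'' statement (which is false for $\leq_{conv}$), that applies to the excess-shape piece; this is exactly where the arrangement conditions $p_{11}\geq p_{12}$, $p_{21}\geq p_{22}$ (smaller shape paired with larger success probability) get used.
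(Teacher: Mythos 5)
Your proposal is correct and follows essentially the same route as the paper: split the shape-$\alpha_2$ summand into a shape-$\alpha_1$ piece plus a shape-$(\alpha_2-\alpha_1)$ piece, apply Proposition~\ref{majorize-beta} to the common-shape pair and Proposition~\ref{lower-beta} to the excess piece (using $p_{12}\geq p_{22}$, which you verify explicitly and the paper merely notes), and combine with Lemma~\ref{conv-additive}. The only cosmetic difference is your preliminary shift to the $\wN$ variables, which is unnecessary since \eqref{eqn:nb-inf-div} already gives the needed additivity for the unshifted variables.
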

\begin{proof}
Under conditions in \eqref{p-conditions-maj-beta}, $p_{12} \geq p_{22}$, therefore by Proposition \ref{lower-beta}, 
\begin{equation}\label{delta-p}
N_{\alpha_2 - \alpha_1, p_{12}} \leq_{conv} N_{\alpha_2 - \alpha_1, p_{22}}
\end{equation}

Since 
\[N_{\alpha_1, p_{11}} + N_{\alpha_2, p_{12}} = N_{\alpha_1, p_{11}} + N_{\alpha_1, p_{12}} + N_{\alpha_2 - \alpha_1, p_{12}}\]
\[N_{\alpha_1, p_{21}} + N_{\alpha_2, p_{22}} = N_{\alpha_1, p_{21}} + N_{\alpha_1, p_{22}} + N_{\alpha_2 - \alpha_1, p_{22}}\]
\eqref{eqn:diff-alpha_result} follows from Proposition \ref{majorize-beta}, Equation \eqref{delta-p} and Lemma \ref{conv-additive}.
\end{proof}

\begin{proposition}\label{majorize-alpha}
\begin{align*}
& \alpha_{11} \leq \alpha_{12}, \alpha_{21} \leq \alpha_{22}, \quad (\alpha_{11}, \alpha_{12}) \prec (\alpha_{21}, \alpha_{22}), \numberthis \label{majorize-alpha-cond}\\
& p_1 \geq p_2 \implies \\
& N_{\alpha_{11}, p_{1}} + N_{\alpha_{12}, p_{2}} \leq_{conv} N_{\alpha_{21}, p_{1}} + N_{\alpha_{22}, p_{2}} \numberthis \label{majorize-alphaa-result}
\end{align*}
\end{proposition}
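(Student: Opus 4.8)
The plan is to follow the template of Proposition \ref{diff-alpha-majorize-beta}: peel off a common summand from each side so that what remains is a single pair of negative binomial variables differing only in their success probability, and then invoke Proposition \ref{lower-beta}. First normalize the set-up: since $(\alpha_{11},\alpha_{12})\prec(\alpha_{21},\alpha_{22})$ with both pairs written in increasing order by \eqref{majorize-alpha-cond}, we get $\alpha_{21}\le\alpha_{11}\le\alpha_{12}\le\alpha_{22}$ together with $\alpha_{11}+\alpha_{12}=\alpha_{21}+\alpha_{22}$; put $\epsilon:=\alpha_{11}-\alpha_{21}=\alpha_{22}-\alpha_{12}\ge0$. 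If $\epsilon=0$ or $p_1=p_2$ the two sides of \eqref{majorize-alphaa-result} have the same distribution and there is nothing to prove, so assume $\epsilon>0$ and $p_1>p_2$.

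Now use the shape-additivity \eqref{eqn:nb-inf-div}. On the left, $N_{\alpha_{11},p_1}\stackrel{st}{=}N_{\alpha_{21},p_1}+N_{\epsilon,p_1}$ with independent summands, so $N_{\alpha_{11},p_1}+N_{\alpha_{12},p_2}\stackrel{st}{=}\bigl(N_{\alpha_{21},p_1}+N_{\alpha_{12},p_2}\bigr)+N_{\epsilon,p_1}$. On the right, $N_{\alpha_{22},p_2}\stackrel{st}{=}N_{\alpha_{12},p_2}+N_{\epsilon,p_2}$, so $N_{\alpha_{21},p_1}+N_{\alpha_{22},p_2}\stackrel{st}{=}\bigl(N_{\alpha_{21},p_1}+N_{\alpha_{12},p_2}\bigr)+N_{\epsilon,p_2}$. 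Thus the two sides share the common summand $N_{\alpha_{21},p_1}+N_{\alpha_{12},p_2}$ and otherwise differ only in $N_{\epsilon,p_1}$ versus $N_{\epsilon,p_2}$.

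Since $p_1>p_2$, Proposition \ref{lower-beta} gives $N_{\epsilon,p_1}\le_{conv}N_{\epsilon,p_2}$; combining this with the trivial relation $N_{\alpha_{21},p_1}+N_{\alpha_{12},p_2}\le_{conv}N_{\alpha_{21},p_1}+N_{\alpha_{12},p_2}$ through Lemma \ref{conv-additive} yields \eqref{majorize-alphaa-result}. I do not expect a genuine obstacle here; the only point deserving care is that the argument works precisely because the hypotheses $\alpha_{11}\le\alpha_{12}$ and $p_1\ge p_2$ pair the larger success probability with the smaller shape, which is what places the leftover chunk $N_{\epsilon,\cdot}$ at $p_1$ on the left and at $p_2$ on the right — the direction in which Proposition \ref{lower-beta} pushes. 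One also uses, harmlessly, that $\alpha_{21}\ge0$ so that $N_{\alpha_{21},p_1}$ is defined, with the convention $N_{0,p}\equiv0$.
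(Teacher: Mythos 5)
Your proof is correct and is essentially the paper's own argument: the same decomposition $\alpha_{11}=\alpha_{21}+\epsilon$, $\alpha_{22}=\alpha_{12}+\epsilon$ exposing the common summand $N_{\alpha_{21},p_1}+N_{\alpha_{12},p_2}$, followed by $N_{\epsilon,p_1}\leq_{conv}N_{\epsilon,p_2}$ from Proposition \ref{lower-beta} and the additivity of $\leq_{conv}$ in Lemma \ref{conv-additive}. The extra remarks on trivial cases and on why the hypotheses align the leftover $N_{\epsilon,\cdot}$ correctly are fine but not needed beyond what the paper does.
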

\begin{proof}
Under conditions \eqref{majorize-alpha-cond}, write $\alpha_{11} = \alpha_{21} + \epsilon$, $\alpha_{22} = \alpha_{12} + \epsilon$. 
\[N_{\alpha_{11}, p_{1}} + N_{\alpha_{12}, p_{2}} = N_{\alpha_{21}, p_{1}} + N_{\alpha_{12}, p_{2}} + N_{\epsilon, p_1}
\]
\[N_{\alpha_{21}, p_{1}} + N_{\alpha_{22}, p_{2}} = N_{\alpha_{21}, p_{1}} + N_{\alpha_{12}, p_{2}} + N_{\epsilon, p_2}\]
Since $p_1 \geq p_2$, by Proposition \ref{lower-beta}, $N_{\epsilon, p_1}\leq_{conv} N_{\epsilon, p_2}$, therefore \eqref{majorize-alphaa-result} follows from Lemma \ref{conv-additive}.
\end{proof}

\begin{theorem}\label{thm:conv-AI}
If $(\ovr{\alpha_1}, \ovr{p_1}) \leq^a (\ovr{\alpha_2}, \ovr{p_2})$, then $\cN_1\geq_{conv}\cN_2$.
\end{theorem}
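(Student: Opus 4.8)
The plan is to peel the statement down to a single elementary coordinate interchange and then dispatch that interchange with the infinite-divisibility trick already used for Propositions \ref{lower-beta} and \ref{majorize-alpha}. Unwinding the definition of $\leq^a$ from Section \ref{sec:definitions}, and using that permuting the summands changes neither $\cN_1$ nor $\cN_2$, I may assume $\ovr{\alpha_1}=\ovr{\alpha_2}=\ovr{\alpha}_{\uparrow}$, with coordinates $\alpha_{(1)}\le\cdots\le\alpha_{(n)}$, and $\ovr{p_1}=\ovr{\phi}_1$, $\ovr{p_2}=\ovr{\phi}_k$, where each $\ovr{\phi}_{s+1}$ is obtained from $\ovr{\phi}_s$ by swapping two entries at positions $i<j$ with $\phi_{s,i}>\phi_{s,j}$. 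Setting $\cN^{(s)}:=\sum_{m=1}^n N_{\alpha_{(m)},\phi_{s,m}}$, transitivity of $\leq_{conv}$ (immediate from Lemma \ref{conv-additive} with one pair taken to be $(0,\cdot)$) reduces the claim to showing $\cN^{(s)}\geq_{conv}\cN^{(s+1)}$ for every $s$. Since $\cN^{(s)}$ and $\cN^{(s+1)}$ share all summands except the $i$-th and $j$-th, Lemma \ref{conv-additive} reduces this further to the two-variable statement
\[
N_{a,p}+N_{b,p'}\ \geq_{conv}\ N_{a,p'}+N_{b,p},\qquad a:=\alpha_{(i)}\le\alpha_{(j)}=:b,\quad p:=\phi_{s,i}>\phi_{s,j}=:p'.
\]

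For this two-variable step, write $b=a+\e$ with $\e\ge 0$ and apply the infinite divisibility \eqref{eqn:nb-inf-div}:
\[
N_{a,p}+N_{b,p'}\ \stackrel{st}{=}\ \big(N_{a,p}+N_{a,p'}\big)+N_{\e,p'},\qquad N_{a,p'}+N_{b,p}\ \stackrel{st}{=}\ \big(N_{a,p'}+N_{a,p}\big)+N_{\e,p}.
\]
The two bracketed sums are equal in distribution, while $p>p'$ gives $N_{\e,p}\leq_{conv}N_{\e,p'}$ by Proposition \ref{lower-beta}; combining these via Lemma \ref{conv-additive} yields the step, and chaining over $s$ finishes the proof.

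I do not expect a genuine obstacle: the content is bookkeeping plus the one-line divisibility argument. The only point needing care is the orientation --- ascending in $\leq^a$ must correspond to descending in $\leq_{conv}$, equivalently the ``anti-aligned'' configuration $(\ovr{\alpha}_{\uparrow},\ovr{p}_{\downarrow})$ that sits at the bottom of $\leq^a$ by \eqref{ai-relation} should produce the stochastically largest convolution. This is exactly what the swap hypothesis $\phi_{s,i}>\phi_{s,j}$ at positions $i<j$ (with $\alpha_{(i)}\le\alpha_{(j)}$) encodes, and it can be sanity-checked against $E[\cN^{(s)}]=\sum_m \alpha_{(m)}(1-\phi_{s,m})/\phi_{s,m}$, which is decreasing in each $\phi_{s,m}$.
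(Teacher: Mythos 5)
Your proof is correct, and it reaches the crucial step by a genuinely different route than the paper, even though the outer skeleton is forced to be the same: both arguments use $=^a$-invariance of the convolution and Lemma \ref{conv-additive} to reduce the theorem to the single-swap inequality $N_{a,p}+N_{b,p'}\geq_{conv}N_{a,p'}+N_{b,p}$ for $a\le b$, $p>p'$ (this is \eqref{conv-ar2} in the paper). At that point the paper invokes the shape-mixture machinery: setting $p'':=p'/p$, it uses Proposition \ref{compound-negative-binomial} to write both sides as compound negative binomials with the common success probability $p$ and random shape parameters $a+b+L_{b,p''}$ versus $a+b+L_{a,p''}$, and then pushes Proposition \ref{raise-alpha} through the mixture to exhibit the extra nonnegative independent piece $N_{Z,p}$. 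You instead split off the common part directly with \eqref{eqn:nb-inf-div}, writing the two sides as $N_{a,p}+N_{a,p'}$ plus $N_{\e,p'}$ or $N_{\e,p}$ respectively, and conclude with Proposition \ref{lower-beta} and Lemma \ref{conv-additive}; this is precisely the $\epsilon$-splitting device the paper uses for Proposition \ref{majorize-alpha}, now applied to the arrangement (swap) statement. What your version buys is economy: at the level of this theorem it needs only infinite divisibility in the shape parameter plus the one-parameter comparison in $p$, making transparent that the AI property for $\leq_{conv}$ is really a consequence of those two facts (the mixture representation is still present one layer down, since Proposition \ref{lower-beta} is itself proved by compounding). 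What the paper's version buys is uniformity: the same mixture representation is the engine later for Theorem \ref{conv-conditons-gammma} and the $\leq_{st}$ results, so proving the swap step that way keeps one technique running through the whole development. Two cosmetic remarks: when $\e=0$ the swap is an identity (Proposition \ref{lower-beta} is stated for $\alpha>0$), and transitivity of $\leq_{conv}$ is most cleanly seen by composing the two independent nonnegative increments rather than via Lemma \ref{conv-additive}; neither point affects correctness.
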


\begin{proof}
By the definition of $\leq^a$, it suffices to prove Theorem \ref{thm:conv-AI} when $\alpha_{11} = \alpha_{22} < \alpha_{12} = \alpha_{21}$, $p_{11} = p_{22} > p_{12} = p_{21}$; $\alpha_{1i} = \alpha_{2i}$, $p_{1i} = p_{2i}$, $i=3,\ldots, n$. By Lemma \ref{conv-additive}, it further reduces to prove
\begin{equation}\label{conv-ar2}
N_{\alpha_{11}, p_{11}} + N_{\alpha_{12}, p_{12}} \geq_{conv} N_{\alpha_{12}, p_{11}} + N_{\alpha_{11}, p_{12}}
\end{equation}

Denote $p:=\frac{p_{12}}{p_{11}}$. By Proposition \ref{compound-negative-binomial}, 
\[N_{\alpha_{12}, p_{12}} = N_{\wL_{\alpha_{12}, p}, p_{11}}\]
Therefore
\[ N_{\alpha_{11}, p_{11}} + N_{\alpha_{12}, p_{12}} = N_{(\alpha_{11} + \alpha_{12} + L_{\alpha_{12}, p}), p_{11}}\]
Similarly, \[
N_{\alpha_{11}, p_{12}} + N_{\alpha_{12}, p_{11}} = N_{\alpha_{11} + \alpha_{12} + L_{\alpha_{11}, p}, p_{11}}
\]

By Proposition \ref{raise-alpha}, $N_{\alpha_{12}, p}\geq_{conv} N_{\alpha_{11}, p}$. In other words, $N_{\alpha_{12}, p}= N_{\alpha_{11}, p} + Z$, where $Z$ is nonnegative and independent of $N_{\alpha_{11}, p}$. Thus\begin{align*}
N_{\alpha_{11}, p_{11}} + N_{\alpha_{12}, p_{12}} = N_{(\alpha_{11} + \alpha_{12} + L_{\alpha_{12}, p}) +Z, p_{11}} = N_{\alpha_{11}, p_{12}} + N_{\alpha_{12}, p_{11}} + N_{Z, p_{11}}
\end{align*}   
which implies \eqref{conv-ar2}. 
\end{proof}

\begin{definition}\label{def:rcm}
Inspired by Proposition \ref{diff-alpha-majorize-beta}, \ref{majorize-alpha} and Theorem \ref{thm:conv-AI}, we consider a partial order on $\cA$ called the \textit{reverse-coupled majorization order} $\rcm$ that is generated by the following relations between $\vectpair{x_1}{y_1}$ and $\vectpair{x_2}{y_2}$:
\begin{enumerate}

\item[(i)] $\exists 1\leq i < j\leq n$, such that \begin{equation}\label{eqn:reverse_pair}
(y_{2j} - y_{2i})(x_{2j} - x_{2i}) \leq 0
\end{equation}
And
\begin{align*}
& (x_{1i}, x_{1j}) \prec (x_{2i}, x_{2j}), \quad x_{1m} = x_{2m}, m\neq i, j;\\
& \ovr{y_1} = \ovr{y_2} \numberthis \label{eqn:rcm-1}
\end{align*}

\item[(ii)] Under condition \eqref{eqn:reverse_pair}, 
\begin{align*}
& (y_{1i}, y_{1j}) \prec (y_{2i}, y_{2j}), \quad y_{1m} =y_{2m}, m\neq i, j;\\
& \ovr{x_1} = \ovr{x_2}  \numberthis \label{eqn:rcm-2}
\end{align*}
\end{enumerate}

Define $\vectpair{x_1}{y_1}\rcm\vectpair{x_2}{y_2}$ if we can find finitely many elements $\vectpair{\phi_i}{\psi_i}\in \cA$, $i=1,2,\ldots, k$ such that $\vectpair{x_1}{y_1} = \vectpair{\phi_1}{\psi_1}$, $\vectpair{x_2}{y_2} = \vectpair{\phi_k}{\psi_k}$; and $\vectpair{\phi_i}{\psi_i}$, $(\ovr{\phi}_{i+1}, \ovr{\psi}_{i+1})$, $i=1,\ldots, k-1$ satisfy one of the above relations (i-ii).

In addition, consider
\begin{enumerate}
\item[(iii)]
\[x_{1i} \leq x_{2i},\quad i=1,\ldots, n; \quad \ovr{y_1} = \ovr{y_2}\]

\item[(iv)]
\[y_{1i} \geq y_{2i},\quad i=1,\ldots, n; \quad \ovr{x_1} = \ovr{x_2} \]
\end{enumerate}	

We call a partial ordering generated by $\rcm$ together with relations (iii-iv) the \textit{reverse-coupled weakly sub-sup majorization order}, denoted by $\vectpair{x_1}{y_1}\wrcm \vectpair{x_2}{y_2}$. 
\end{definition}
\begin{remark}
From the definition, it is straightforward to see
\begin{equation*}
\begin{split}
\vectpair{x_1}{y_1}\rcm\vectpair{x_2}{y_2} & \implies \ovr{x_1}\prec \ovr{x_2}, \quad \ovr{y_1}\prec \ovr{y_2}\\
\vectpair{x_1}{y_1}\wrcm\vectpair{x_2}{y_2} & \implies \ovr{x_1}\prec_w \ovr{x_2}, \quad \ovr{y_1}\prec^w \ovr{y_2}\\
\end{split}
\end{equation*}
Because of the restriction in \eqref{eqn:reverse_pair}, the reverse implications are only true in special circumstances, for example if $\ovr{x_1} = \ovr{x_2} = (\alpha, \ldots, \alpha)$, then:
\begin{align}
& \ovr{y_1}\prec \ovr{y_2}\implies \vectpair{x_1}{y_1}\rcm\vectpair{x_2}{y_2} \label{rcm-reverse1}\\
& \ovr{y_1}\prec^w \ovr{y_2}\implies \vectpair{x_1}{y_1}\wrcm\vectpair{x_2}{y_2} \label{rcm-reverse2}
\end{align}

Similarly, if $\ovr{y_1} = \ovr{y_2} = (\beta, \ldots, \beta)$, then:
\begin{align}
& \ovr{x_1}\prec \ovr{x_2}\implies \vectpair{x_1}{y_1}\rcm\vectpair{x_2}{y_2} \label{rcm-reverse3}\\
& \ovr{x_1}\prec_w \ovr{x_2}\implies \vectpair{x_1}{y_1}\wrcm\vectpair{x_2}{y_2} \label{rcm-reverse4}
\end{align}

In addition, in Relation (i) let $(x_{1i}, x_{1j})$ be a permutation of $(x_{2i}, x_{2j})$, then it follows from the definition of $\leq^a$ that 
\begin{equation}\label{ai-implies-rcm}
\vectpair{x_1}{y_1} \geq^a \vectpair{x_2}{y_2} \implies \vectpair{x_1}{y_1}\rcm\vectpair{x_2}{y_2} 
\end{equation}
From \eqref{rcm-reverse1} \eqref{rcm-reverse2} \eqref{rcm-reverse3} \eqref{rcm-reverse4} and \ref{ai-implies-rcm}, we conclude that $\rcm$ and $\wrcm$ are extensions of the majorization (weak majorization) relations from vectors in $\R^n$ to vector pairs in $\cA$. Meanwhile, $\rcm$ and $\wrcm$  can also be considered as extensions of $\leq^a$ (with the inequalities reversed) between vector pairs with same components (up to rearrangements) to vector pairs with different components. In this sense, $\rcm$ and $\wrcm$ incorporate both majorization (weak majorization) and arrangement informations in a ``coupled'' way.
\end{remark}

We can capture the results obtained so far in the following general statement:
\begin{theorem}\label{thm:conv-general}
\begin{equation}
\vectpair{\alpha_1}{p_1} \wrcm \vectpair{\alpha_2}{p_2} \implies \cN_1\leq_{conv}\cN_2
\end{equation}
\end{theorem}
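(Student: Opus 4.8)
The plan is to reduce the theorem to the four elementary relations (i)--(iv) of Definition~\ref{def:rcm} that generate $\wrcm$, verify $\cN_1 \leq_{conv} \cN_2$ separately for each, and then conclude for an arbitrary $\wrcm$-chain by transitivity of $\leq_{conv}$ (it is a preorder; see \cite{Shaked2007stochastic}). In each of (i)--(iv) all coordinates of $\vectpair{\alpha_r}{p_r}$ but at most two agree after a common permutation, which is harmless since $\cN_r$ depends only on the multiset of parameter pairs; so I will use Lemma~\ref{conv-additive} throughout to work with a single pair of summands and add the common part back at the end.

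Relations (iii) and (iv) I expect to be immediate: in case (iii) we have $\alpha_{1i}\leq\alpha_{2i}$ with equal success probabilities, so $N_{\alpha_{1i},p_{1i}}\leq_{conv}N_{\alpha_{2i},p_{2i}}$ by Proposition~\ref{raise-alpha}; in case (iv) we have $p_{1i}\geq p_{2i}$ with equal shapes, so the same comparison holds by Proposition~\ref{lower-beta}. Summing over $i$ via Lemma~\ref{conv-additive} finishes both.

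For relation (i) the plan is as follows. After a common permutation assume the two pairs differ only in coordinates $i,j$, with $(\alpha_{1i},\alpha_{1j})\prec(\alpha_{2i},\alpha_{2j})$, $p_{1i}=p_{2i}$, $p_{1j}=p_{2j}$; relabelling $i\leftrightarrow j$ if necessary (which does not change $\cN_1,\cN_2$), arrange $p_{2i}\geq p_{2j}$, whereupon \eqref{eqn:reverse_pair} forces $\alpha_{2i}\leq\alpha_{2j}$. The $n=2$ majorization then yields $\alpha_{2i}\leq\min(\alpha_{1i},\alpha_{1j})$ and $\alpha_{2j}=\alpha_{1i}+\alpha_{1j}-\alpha_{2i}$. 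Peeling a common $N_{\alpha_{2i},p_{2i}}$ off both sides (using \eqref{eqn:nb-inf-div}) reduces the claim by Lemma~\ref{conv-additive} to
\[
N_{\alpha_{1i}-\alpha_{2i},\,p_{2i}}+N_{\alpha_{1j},\,p_{2j}}\ \leq_{conv}\ N_{\alpha_{1i}-\alpha_{2i}+\alpha_{1j},\,p_{2j}},
\]
which follows from $N_{\alpha_{1i}-\alpha_{2i},p_{2i}}\leq_{conv}N_{\alpha_{1i}-\alpha_{2i},p_{2j}}$ (Proposition~\ref{lower-beta}, since $p_{2i}\geq p_{2j}$), \eqref{eqn:nb-inf-div}, and Lemma~\ref{conv-additive}. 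This route is insensitive to the relative order of $\alpha_{1i}$ and $\alpha_{1j}$, so it covers all instances of (i), not only the sorted ones already contained in Proposition~\ref{majorize-alpha} and Theorem~\ref{thm:conv-AI}. Relation (ii) is dual: relabel so that $\alpha_{1i}=\alpha_{2i}=:\alpha_i\leq\alpha_j:=\alpha_{1j}=\alpha_{2j}$; then \eqref{eqn:reverse_pair} gives $p_{2i}\geq p_{2j}$ and the $n=2$ majorization $(p_{1i},p_{1j})\prec(p_{2i},p_{2j})$ gives $p_{1j}\geq p_{2j}$. Splitting $N_{\alpha_j,\cdot}=N_{\alpha_i,\cdot}+N_{\alpha_j-\alpha_i,\cdot}$ via \eqref{eqn:nb-inf-div}, it suffices to combine $N_{\alpha_i,p_{1i}}+N_{\alpha_i,p_{1j}}\leq_{conv}N_{\alpha_i,p_{2i}}+N_{\alpha_i,p_{2j}}$ (Proposition~\ref{majorize-beta}, which needs only the majorization) with $N_{\alpha_j-\alpha_i,p_{1j}}\leq_{conv}N_{\alpha_j-\alpha_i,p_{2j}}$ (Proposition~\ref{lower-beta}, since $p_{1j}\geq p_{2j}$) via Lemma~\ref{conv-additive}; this bypasses Proposition~\ref{diff-alpha-majorize-beta} entirely.

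The step I expect to be the main obstacle is not computational but conceptual, and it is exactly where the coupling in \eqref{eqn:reverse_pair} is indispensable: the majorization hypothesis on the two moving coordinates does not on its own respect their pairing with the frozen coordinate, so one must relabel $i\leftrightarrow j$ and then invoke the reverse-pair condition to guarantee that the residual single-variable comparison left over after cancellation is a \emph{decrease} of the success probability (to which Proposition~\ref{lower-beta} applies) and never an increase. Secondary points to verify along the way are that relabelling $i\leftrightarrow j$ leaves $\cN_1,\cN_2$ unchanged, and that a shape parameter equal to $0$ is the point mass at $0$ (consistent with \eqref{nbrv-density}), so that all the decompositions above are legitimate.
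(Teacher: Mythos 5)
Your proof is correct, and it takes a genuinely different route from the paper's on the two coupled relations (i) and (ii). The paper first disposes of the ``sorted'' configurations by citing Proposition~\ref{majorize-alpha} (for (i)) and Proposition~\ref{diff-alpha-majorize-beta} (for (ii)), and then handles an unsorted pair $(\alpha_{1i},\alpha_{1j})$ by inserting the intermediate parameter $(\ovr{\alpha_1}',\ovr{p_1})$ and invoking the arrangement result, Theorem~\ref{thm:conv-AI} (whose proof rests on the shape-mixture machinery of Proposition~\ref{compound-negative-binomial}). You instead treat an arbitrary arrangement of the ``1''-pair uniformly: after relabelling so that $p_{2i}\geq p_{2j}$ and $\alpha_{2i}\leq\alpha_{2j}$, the two-coordinate majorization gives $\alpha_{2i}\leq\min(\alpha_{1i},\alpha_{1j})$, and peeling $N_{\alpha_{2i},p_{2i}}$ via \eqref{eqn:nb-inf-div} reduces everything to one application of Proposition~\ref{lower-beta} (moving the excess shape $\alpha_{1i}-\alpha_{2i}$ from the larger to the smaller success probability) plus Lemma~\ref{conv-additive}; your relation-(ii) argument likewise re-derives the decomposition behind Proposition~\ref{diff-alpha-majorize-beta} inline from Propositions~\ref{majorize-beta} and \ref{lower-beta}. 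What this buys is a proof of Theorem~\ref{thm:conv-general} that never needs Theorem~\ref{thm:conv-AI} or the sortedness hypotheses of Propositions~\ref{majorize-alpha} and \ref{diff-alpha-majorize-beta} (in fact your single-pair computation subsumes the relevant instance of the AI theorem, since a transposition of $(\alpha_{1i},\alpha_{1j})$ is a special case of the majorization); the mixture machinery still enters, but only through the cited Proposition~\ref{majorize-beta}. The paper's route, by contrast, makes the conceptual point that $\leq^a$-monotonicity is the mechanism absorbing arbitrary arrangements. Two trivia to tidy up, neither a gap: when the two relevant coordinates of $\ovr{p_2}$ (resp.\ $\ovr{\alpha_2}$) are tied, \eqref{eqn:reverse_pair} forces nothing, but then the desired ordering can simply be arranged by relabelling (or the comparison degenerates to an equality in distribution); and Proposition~\ref{lower-beta} is stated with strict inequality, so the equality case should be noted as trivial. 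Your handling of a zero residual shape as the point mass at $0$ is consistent with \eqref{nbrv-density}.
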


\begin{proof}
We will first show $\cN_1\leq_{conv} \cN_2$ when $\vectpair{\alpha_1}{p_1}$, $\vectpair{\alpha_2}{p_2}$ satisfy Relation (i) in Definition \ref{def:rcm}. Fix $1\leq i \leq j \leq n$, without loss of generality, assume $p_{1i} = p_{2i} \leq p_{2j} = p_{1j}$, $\alpha_{2i} > \alpha_{2j}$. Given $(\alpha_{1i}, \alpha_{1j})\prec (\alpha_{2i}, \alpha_{2j})$, if $\alpha_{1i} \geq \alpha_{1j}$, then from Proposition \ref{majorize-alpha}, \[
N_{\alpha_{1i}, p_{1i}} + N_{\alpha_{1j}, p_{1j}} \leq_{conv} N_{\alpha_{2i}, p_{2i}} + N_{\alpha_{2j}, p_{2j}}
\]
Since $\alpha_{1m} = \alpha_{2m}$, $p_{1m} = p_{2m}$, $m\neq i, j$, $\cN_1\leq_{conv}\cN_2$ follows from Lemma \ref{conv-additive}. 

If $\alpha_{1i} < \alpha_{1j}$, define $\ovr{\alpha_{1}}'$ by 
\[
\alpha'_{1i} = \alpha_{1j},\quad \alpha'_{1j} = \alpha_{1i}; \quad \alpha_{1m} = \alpha'_{1m}, m\neq i, j
\]

Then $\vectpair{\alpha_1}{p_1} \geq^a (\ovr{\alpha_1}', \ovr{p_1})$, therefore by Theorem \ref{thm:conv-AI} and the argument above,
\[\cN_1 \leq_{conv} \cN_{\ovr{\alpha_1}', \ovr{p_1}} \leq_{conv} \cN_2\]

The proof for Relation (ii) of Definition \ref{def:rcm} is similar, where Proposition \ref{diff-alpha-majorize-beta} is used instead of Proposition \ref{majorize-alpha}. The proofs for Relation (iii) and (iv) of Definition \ref{def:rcm} follow easily from Proposition \ref{raise-alpha} and \ref{lower-beta}.
\end{proof} 

From \eqref{ai-implies-rcm}, Theorem \ref{thm:conv-AI} can be seen as a special case of Theorem \ref{thm:conv-general}.

\begin{theorem}\label{conv-conditons-gammma}
\[\vectpair{\alpha_1}{\beta_1} \wrcm \vectpair{\alpha_2}{\beta_2} \implies \cG_1\leq_{conv}\cG_2\]
\end{theorem}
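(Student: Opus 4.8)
The plan is to transfer the negative-binomial statement (Theorem~\ref{thm:conv-general}) to the gamma setting via the shape-mixture machinery of Section~\ref{sec:section-compound-rv}, exactly as advertised in the introduction. The key observation is that Proposition~\ref{prop:gamma-mix-more} lets us write a convolution of gamma variables $\widetilde{\cG}_r = \sum_{i=1}^n G_{\alpha_{ri}, \beta_{ri}}$ as a single mixed gamma $G_{\widetilde{\cL}_r, \beta}$, where $\beta$ is a common scale parameter chosen larger than every $\beta_{ri}$ appearing in either convolution, $p_{ri} = \beta_{ri}/\beta$, and $\widetilde{\cL}_r = \sum_{i=1}^n \wL_{\alpha_{ri}, p_{ri}} = \widetilde{\cN}_r$ is a convolution of shifted negative binomials. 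Since scaling all $\beta_{ri}$ by a common factor $1/\beta$ does not change majorization or arrangement relations (it is a positive linear map applied coordinatewise to both $\ovr{\beta_1}$ and $\ovr{\beta_2}$, preserving $\prec$, $\prec_w$, $\prec^w$, the sign condition \eqref{eqn:reverse_pair}, and the coordinatewise inequalities (iii)--(iv)), the hypothesis $\vectpair{\alpha_1}{\beta_1} \wrcm \vectpair{\alpha_2}{\beta_2}$ gives $\vectpair{\alpha_1}{p_1} \wrcm \vectpair{\alpha_2}{p_2}$.

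First I would fix such a common $\beta$ and record the reduction $\cG_r \stackrel{st}{=} G_{\widetilde{\cL}_r, \beta}$ with $\widetilde{\cL}_r = \widetilde{\cN}_r = \sum_i \wN_{\alpha_{ri}, p_{ri}} - (\text{shift})$; more precisely $\widetilde{\cL}_r$ has the law of $\sum_i \wL_{\alpha_{ri}, p_{ri}}$, which is the same as the law of the shifted-NB convolution $\widetilde{\cN}_r$ up to the deterministic shift $\sum_i \alpha_{ri}$. Second, I would apply Theorem~\ref{thm:conv-general} to conclude $\cN_1 \leq_{conv} \cN_2$, hence (adding the same deterministic shift to both sides, which preserves $\leq_{conv}$) $\widetilde{\cL}_1 \leq_{conv} \widetilde{\cL}_2$; in particular $\widetilde{\cL}_1 \leq_{st} \widetilde{\cL}_2$. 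Third, I would invoke the mixture lemma: by Lemma~\ref{lem:alpha-beta-lr-order} the family $\alpha \mapsto G_{\alpha, \beta}$ is increasing in $\leq_{lr}$, hence in $\leq_{st}$, so Lemma~\ref{lem:st-mixture} with $\Theta(\alpha) = G_{\alpha,\beta}$ yields $G_{\widetilde{\cL}_1, \beta} \leq_{st} G_{\widetilde{\cL}_2, \beta}$, i.e. $\cG_1 \leq_{st} \cG_2$.

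The remaining work — and this is the main obstacle — is upgrading $\leq_{st}$ to $\leq_{conv}$. The point is that $\leq_{conv}$ for $\widetilde{\cL}_1 \leq_{conv} \widetilde{\cL}_2$ means $\widetilde{\cL}_2 \stackrel{st}{=} \widetilde{\cL}_1 + Z$ with $Z \geq 0$ independent of $\widetilde{\cL}_1$; I want to push this $Z$ through the $G_{\cdot, \beta}$ construction. Conditionally on $\widetilde{\cL}_1$ and $Z$, the gamma infinite divisibility \eqref{eqn:gamma-inf-div} gives $G_{\widetilde{\cL}_1 + Z, \beta} = G_{\widetilde{\cL}_1, \beta} + G_{Z, \beta}$ with the two summands independent given $(\widetilde{\cL}_1, Z)$, and $G_{Z,\beta} \geq 0$. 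One must check the independence structure carefully: build $G_{\widetilde{\cL}_1, \beta}$ from $\widetilde{\cL}_1$ alone and $G_{Z, \beta}$ from $Z$ alone using independent gamma-process increments, so that $G_{\widetilde{\cL}_1, \beta}$ is independent of $(Z, G_{Z,\beta})$ and in particular of $G_{Z,\beta}$. Then $\cG_2 \stackrel{st}{=} G_{\widetilde{\cL}_2, \beta} \stackrel{st}{=} G_{\widetilde{\cL}_1, \beta} + G_{Z,\beta} \stackrel{st}{=} \cG_1 + G_{Z,\beta}$ with a nonnegative independent summand, which is exactly $\cG_1 \leq_{conv} \cG_2$. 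I would then remark that statement (4) of Theorem~\ref{intro-main-result} and the analogous $\leq_{st}$ gamma result follow by the same transfer applied to the weaker relation $\wrcm$ on $(\ovr\alpha, \log\ovr\beta)$ — or simply note it is proved elsewhere in Section~\ref{sec:st-order}.
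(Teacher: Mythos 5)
Your proposal follows essentially the same route as the paper's proof: represent $\cG_r \stackrel{st}{=} G_{R_r+\cN_r,\beta}$ for one common large scale $\beta$ (Proposition \ref{prop:gamma-mix-more}), observe that rescaling all $\beta_{ri}$ by $1/\beta$ preserves the $\wrcm$ relation so that Theorem \ref{thm:conv-general} gives $\cN_1\leq_{conv}\cN_2$, and then split off an independent nonnegative gamma summand via infinite divisibility, which is exactly the paper's concluding display $\cG_2=\cG_1+G_{R_2-R_1+Z,\beta}$. One small slip: the deterministic shifts $R_r=\sum_i\alpha_{ri}$ are in general unequal under $\wrcm$ (relation (iii) of Definition \ref{def:rcm} raises the shape coordinates), so you cannot pass from $\cN_1\leq_{conv}\cN_2$ to $\widetilde{\cL}_1\leq_{conv}\widetilde{\cL}_2$ by ``adding the same deterministic shift to both sides''; instead you need $R_1\leq R_2$, which follows from $\ovr{\alpha_1}\prec_w\ovr{\alpha_2}$, giving $\widetilde{\cL}_2\stackrel{st}{=}\widetilde{\cL}_1+(R_2-R_1+Z)$ with a nonnegative independent remainder --- precisely how the paper handles it. Also, your intermediate derivation of $\cG_1\leq_{st}\cG_2$ via Lemma \ref{lem:st-mixture} is an unnecessary detour, since the convolution-order argument in your final paragraph supersedes it.
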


\begin{proof}
Following Proposition \ref{prop:nb-mix-more}, pick a large constant $\beta > \max_{r=1,2; i=1,\ldots, n}\beta_{ri}$, define $p_{ri} = \frac{\beta_{ri}}{\beta}$, we have $\cG_{r} = G_{R_r + \cN_r, \beta}$ where
\[ 
R_r = \sum_{i=1}^n \alpha_{ri},\quad \cN_r = \sum_{i=1}^n N_{\alpha_{ri}, p_{ri}}
\] 
\[\vectpair{\alpha_1}{\beta_1} \wrcm \vectpair{\alpha_2}{\beta_2}\implies \vectpair{\alpha_1}{p_1} \wrcm \vectpair{\alpha_2}{p_2} \implies \cN_1\leq_{conv} \cN_2\]
Therefore $\cN_2 = \cN_1 + Z$ where $Z$ is a non-negative r.v. independent of $\cN_1$. Moreover, $\ovr{\alpha_1}\prec_w \ovr{\alpha_2} \implies R_1 \leq R_2$, therefore  
\[ 
\cG_2 = G_{R_2 + \cN_2, \beta} = G_{R_1 + \cN_1, \beta} + G_{R_2 - R_1 + Z, \beta} = \cG_1 + G_{R_2 - R_1 + Z, \beta} 
\]
Since $G_{R_2 - R_1 + Z, \beta}\geq 0$,
\[
\cG_1\leq_{conv} \cG_2
\] 

\end{proof}

\begin{example}
For example, $\vectpair{\alpha_1}{\beta_1} \rcm \vectpair{\alpha_2}{\beta_2}$ where  
\[\ovr{\alpha_1} = (0.4, 0.6, 0.5), \quad  \ovr{p_1} = (2,3,4) \]
\[\ovr{\alpha_2} = (0.7, 0.3, 0.5), \quad  \ovr{p_2} = (1,3,5) \]
This can be seen with the help of intermediate pairs,
\[\ovr{\phi_2} = (0.4, 0.6, 0.5), \quad \ovr{\psi_2} = (2,2,5) \]
\[\ovr{\phi_3} = (0.7, 0.3, 0.5), \quad \ovr{\psi_3} = (2,2,5) \]
That $\vectpair{\alpha_1}{\beta_1}\rcm \vectpair{\phi_2}{\psi_2}$ follows from (ii) of Definition \ref{def:rcm}, where $i=2$, $j=3$; $\vectpair{\phi_2}{\psi_2}\rcm \vectpair{\phi_3}{\psi_3}$ follows from (i), where $i=1$, $j=2$; finally, $\vectpair{\phi_3}{\psi_3} \rcm \vectpair{\alpha_2}{\beta_2}$ follows from (ii), where $i=1$, $j=2$. Therefore $\cG_{\ovr{\alpha_1}, \ovr{\beta_1}}\leq_{conv} \cG_{\ovr{\alpha_2}, \ovr{\beta_2}}$. 
\end{example}

In the following, we give an easily recongnizable criterion for $\wrcm$ in a useful special case:
\begin{theorem}
\label{thm:weak-prec-cond}
If $\vectpair{x_2}{y_2} =^a (\ovr{x_2}_{\uparrow}, \ovr{y_2}_{\downarrow})$,
then
\begin{equation}\label{weak-prec-nb}
\ovr{x_1} \prec \ovr{x_2}, \quad \ovr{y_1} \prec \ovr{y_2} \implies \vectpair{x_1}{y_1} \rcm \vectpair{x_2}{y_2}
\end{equation}
\begin{equation}
\ovr{x_1} \prec_w \ovr{x_2}, \quad \ovr{y_1} \prec^w \ovr{y_2} \implies \vectpair{x_1}{y_1} \wrcm \vectpair{x_2}{y_2}
\end{equation}
\end{theorem}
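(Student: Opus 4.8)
The plan is to reduce both implications to a single ``sorted'' case and then establish that case by iterating $T$-transformations (Lemma~\ref{T-transformations}) while keeping one of the two vectors oppositely ordered to the other. \textbf{Reduction.} By \eqref{ai-relation} together with \eqref{ai-implies-rcm}, every pair satisfies $\vectpair{x_1}{y_1}\rcm(\ovr{x_1}_{\uparrow},\ovr{y_1}_{\downarrow})$, and likewise with $\wrcm$; since $\rcm$ and $\wrcm$ are transitive (a concatenation of admissible chains is an admissible chain), it suffices to prove both implications when $(\ovr{x_1},\ovr{y_1})=(\ovr{x_1}_{\uparrow},\ovr{y_1}_{\downarrow})$ and, by the hypothesis on $\vectpair{x_2}{y_2}$, when $(\ovr{x_2},\ovr{y_2})=(\ovr{x_2}_{\uparrow},\ovr{y_2}_{\downarrow})$. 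So assume from now on that $\ovr{x_1},\ovr{x_2}$ are nondecreasing and $\ovr{y_1},\ovr{y_2}$ are nonincreasing.

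\textbf{The majorization case \eqref{weak-prec-nb}.} I build the chain in two phases. In Phase~1, hold the $y$-part fixed at $\ovr{y_1}$ and move the $x$-part from $\ovr{x_1}$ up to $\ovr{x_2}$: by Lemma~\ref{T-transformations} there are nondecreasing vectors $\ovr{x_1}=\ovr{\phi}_1\prec\cdots\prec\ovr{\phi}_k=\ovr{x_2}$ with $\ovr{\phi}_{s+1}$ obtained from $\ovr{\phi}_s$ by $\phi_{s+1,i}=\phi_{s,i}-\epsilon$, $\phi_{s+1,j}=\phi_{s,j}+\epsilon$ for some $i<j$, $\epsilon\ge 0$. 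Each step $\bigl((\ovr{\phi}_s,\ovr{y_1}),(\ovr{\phi}_{s+1},\ovr{y_1})\bigr)$ realizes Relation~(i) of Definition~\ref{def:rcm}: the $y$-parts agree, only coordinates $i,j$ of the $x$-part change, $(\phi_{s,i},\phi_{s,j})\prec(\phi_{s+1,i},\phi_{s+1,j})$ (equal sum and at least as spread out), and the sign condition \eqref{eqn:reverse_pair} holds since $\phi_{s+1,j}-\phi_{s+1,i}=(\phi_{s,j}-\phi_{s,i})+2\epsilon\ge 0$ while $y_{1j}-y_{1i}\le 0$ ($\ovr{y_1}$ nonincreasing, $i<j$). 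Phase~2 is the mirror image: hold the $x$-part fixed at $\ovr{x_2}$ and move the $y$-part from $\ovr{y_1}$ up to $\ovr{y_2}$, applying Lemma~\ref{T-transformations} to the increasing vectors $\ovr{y_1}_{\uparrow}\prec\ovr{y_2}_{\uparrow}$ and reversing all coordinate indices so that the intermediate vectors are nonincreasing and each step raises the $a$-th and lowers the $b$-th coordinate for some $a<b$; these steps realize Relation~(ii), the sign condition again holding because $\ovr{x_2}$ is nondecreasing and the $y$-part stays nonincreasing, hence oppositely ordered at every $a<b$. Concatenating the two phases yields $(\ovr{x_1}_{\uparrow},\ovr{y_1}_{\downarrow})\rcm(\ovr{x_2}_{\uparrow},\ovr{y_2}_{\downarrow})$, which with the Reduction proves \eqref{weak-prec-nb}.

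\textbf{The weak case.} For the second implication, first complete each coordinate to a genuine majorization: by the classical completion lemmas (\cite{Marshall2011}), $\ovr{x_1}\prec_w\ovr{x_2}$ gives some $\ovr u$ with $\ovr{x_1}\le\ovr u$ componentwise and $\ovr u\prec\ovr{x_2}$, and $\ovr{y_1}\prec^w\ovr{y_2}$ gives some $\ovr w$ with $\ovr w\le\ovr{y_1}$ componentwise and $\ovr w\prec\ovr{y_2}$. Then $(\ovr{x_1}_{\uparrow},\ovr{y_1}_{\downarrow})$ passes to $(\ovr u,\ovr{y_1}_{\downarrow})$ by one use of Relation~(iii); this re-sorts (via \eqref{ai-relation}--\eqref{ai-implies-rcm}) to $(\ovr u_{\uparrow},\ovr{y_1}_{\downarrow})$, which passes to $(\ovr u_{\uparrow},\ovr w)$ by one use of Relation~(iv), re-sorting to $(\ovr u_{\uparrow},\ovr w_{\downarrow})$; finally the two-phase argument of the previous paragraph applied to $(\ovr u,\ovr w)$ gives $(\ovr u_{\uparrow},\ovr w_{\downarrow})\rcm(\ovr{x_2}_{\uparrow},\ovr{y_2}_{\downarrow})$ since $\ovr u\prec\ovr{x_2}$ and $\ovr w\prec\ovr{y_2}$. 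All links lie in $\wrcm$, so transitivity and the Reduction finish the proof.

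\textbf{Main obstacle.} The analytic content is light — a $T$-transform performed on a monotone vector against a fixed oppositely monotone vector automatically satisfies \eqref{eqn:reverse_pair} — so the real difficulty is purely organizational: the equalities ``$\ovr{y_1}=\ovr{y_2}$'' in Relations~(i),(iii) and ``$\ovr{x_1}=\ovr{x_2}$'' in Relations~(ii),(iv) are equalities of ordered tuples, and one must carry a single consistent coupling of the $x$- and $y$-coordinates through every reduction, every $T$-transform, and every re-sorting step. This is exactly the point where the hypothesis $\vectpair{x_2}{y_2}=^a(\ovr{x_2}_{\uparrow},\ovr{y_2}_{\downarrow})$ is indispensable, since otherwise the target pair need not be oppositely ordered and the sign condition \eqref{eqn:reverse_pair} can fail at some of the required steps.
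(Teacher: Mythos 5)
Your proposal is correct, and its core is the same as the paper's: reduce to oppositely sorted pairs via \eqref{ai-relation} and \eqref{ai-implies-rcm}, then run Lemma \ref{T-transformations} on one coordinate while the other is held fixed and oppositely monotone, so that every $T$-step automatically satisfies \eqref{eqn:reverse_pair} and realizes Relation (i) or (ii) of Definition \ref{def:rcm}, with the weak hypotheses absorbed through Relations (iii)--(iv). The one genuine divergence is the completion step in the weak case: the paper manufactures explicit vectors $\ovr{x_1}'$ and $\ovr{y_2}'$ by adding the entire deficit (surplus) to a single extreme coordinate and asserts $\ovr{x_1}'\prec\ovr{x_2}$ and $\ovr{y_1}\prec\ovr{y_2}'$, whereas you invoke the classical characterizations $\ovr{x_1}\prec_w\ovr{x_2}\iff\exists\,\ovr{u}\geq\ovr{x_1}$ with $\ovr{u}\prec\ovr{x_2}$, and $\ovr{y_1}\prec^w\ovr{y_2}\iff\exists\,\ovr{w}\leq\ovr{y_1}$ with $\ovr{w}\prec\ovr{y_2}$, adjusting the source pair rather than one vector on each side. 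Your choice is the more robust one: the lump-sum construction does not in general produce a vector majorized by $\ovr{x_2}$ (for instance $\ovr{x_1}=(0,0)\prec_w(1,1)=\ovr{x_2}$ gives $\ovr{x_1}'=(0,2)$, and $(0,2)\not\prec(1,1)$), so the existence form of the completion lemma is what that step really requires. The remaining differences --- performing the $x$-phase before the $y$-phase, and lowering $\ovr{y_1}$ instead of raising $\ovr{y_2}$ --- are immaterial, and your bookkeeping of the re-sorting links and of the sign condition at each $T$-step is accurate.
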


Theorem \ref{thm:weak-prec-cond} is proved in the appendix. 

\begin{corollary}\label{cor:weak-prec}
Suppose $\vectpair{\alpha_2}{p_2} =^a (\ovr{\alpha_2}_{\uparrow}, \ovr{p_2}_{\downarrow})$,
\begin{equation*}
\ovr{\alpha_1} \prec_w \ovr{\alpha_2}, \quad \ovr{p_1} \prec^w \ovr{p_2} \implies \cN_1\leq_{conv} \cN_2
\end{equation*}
Similarly, if $\vectpair{\alpha_2}{\beta_2} =^a (\ovr{\alpha_2}_{\uparrow}, \ovr{\beta_2}_{\downarrow})$,
\begin{equation*}
\ovr{\alpha_1} \prec_w \ovr{\alpha_2}, \quad \ovr{\beta_1} \prec^w \ovr{\beta_2} \implies \cG_1\leq_{conv} \cG_2
\end{equation*}
\end{corollary}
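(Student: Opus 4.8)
The plan is to obtain the corollary as an immediate consequence of Theorem \ref{thm:weak-prec-cond} combined with Theorem \ref{thm:conv-general} (for the negative binomial half) and Theorem \ref{conv-conditons-gammma} (for the gamma half); no new estimate is required, since all of the analytic content has already been isolated in those three results.

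First I would dispose of the negative binomial statement. Apply Theorem \ref{thm:weak-prec-cond} with $\ovr{x_r}=\ovr{\alpha_r}$ and $\ovr{y_r}=\ovr{p_r}$, $r=1,2$. The standing hypothesis $\vectpair{\alpha_2}{p_2}=^a(\ovr{\alpha_2}_\uparrow,\ovr{p_2}_\downarrow)$ is precisely the assumption of that theorem, and the inputs $\ovr{\alpha_1}\prec_w\ovr{\alpha_2}$, $\ovr{p_1}\prec^w\ovr{p_2}$ are exactly what its second implication consumes; hence $\vectpair{\alpha_1}{p_1}\wrcm\vectpair{\alpha_2}{p_2}$. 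Feeding this into Theorem \ref{thm:conv-general} gives $\cN_1\leq_{conv}\cN_2$.

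The gamma statement is handled the same way after replacing $\ovr{y_r}=\ovr{p_r}$ with $\ovr{y_r}=\ovr{\beta_r}$: Theorem \ref{thm:weak-prec-cond} converts $\ovr{\alpha_1}\prec_w\ovr{\alpha_2}$, $\ovr{\beta_1}\prec^w\ovr{\beta_2}$ (under $\vectpair{\alpha_2}{\beta_2}=^a(\ovr{\alpha_2}_\uparrow,\ovr{\beta_2}_\downarrow)$) into $\vectpair{\alpha_1}{\beta_1}\wrcm\vectpair{\alpha_2}{\beta_2}$, and Theorem \ref{conv-conditons-gammma} then yields $\cG_1\leq_{conv}\cG_2$.

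Because the corollary reduces to invoking two theorems already in hand, it presents no obstacle of its own; the real work sits inside Theorem \ref{thm:weak-prec-cond}, whose proof is postponed to the appendix. There the task is to upgrade a pair of weak majorizations to the reverse-coupled relation $\wrcm$. I expect that argument to first use relations (iii)--(iv) of Definition \ref{def:rcm} to trade the weak majorizations for honest majorizations (adjusting $\ovr{\alpha_1}$ upward and $\ovr{p_1}$ downward coordinatewise), and then to push those majorizations through the $T$-transform decomposition of Lemma \ref{T-transformations}, realizing each elementary transform on the $\alpha$-vector (resp.\ on the $p$-vector) as a step of type (i) (resp.\ type (ii)). The antitone alignment $(\ovr{\alpha_2}_\uparrow,\ovr{p_2}_\downarrow)$ is exactly what keeps the sign condition \eqref{eqn:reverse_pair} satisfied at each step, and the delicate point will be the bookkeeping needed to preserve that alignment as the successive transforms are applied.
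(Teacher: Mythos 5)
Your derivation is correct and matches the paper's intended route: the corollary is stated as an immediate consequence of Theorem \ref{thm:weak-prec-cond} (which converts the weak majorizations, under the alignment $\vectpair{\alpha_2}{p_2} =^a (\ovr{\alpha_2}_{\uparrow}, \ovr{p_2}_{\downarrow})$, into $\wrcm$) combined with Theorem \ref{thm:conv-general} for $\cN$ and Theorem \ref{conv-conditons-gammma} for $\cG$, exactly as you argue. Your closing sketch of how Theorem \ref{thm:weak-prec-cond} itself should be proved also agrees in spirit with the appendix argument, so no gap remains.
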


We note that when $\ovr{\alpha_2}=(\alpha, \ldots, \alpha)$ or $\ovr{p_2} = (p,\ldots, p)$, then $\vectpair{\alpha_2}{p_2} =^a (\ovr{\alpha_2}_{\uparrow}, \ovr{p_2}_{\downarrow})$ is trivially satisfied. 

\begin{remark}
In Corollary \ref{cor:weak-prec}, we only need $\vectpair{\alpha_2}{\beta_2} =^a (\ovr{\alpha_2}_{\uparrow}, \ovr{\beta_2}_{\downarrow})$ but have no such requirement for $\vectpair{\alpha_1}{\beta_1}$. This is essentially due to the result of Theorem \ref{thm:conv-AI}. It is interesting to compare the condition in Corollary \ref{cor:weak-prec} with known results in the literature. For example, using our notation, when $\alpha_{ri}\geq 1$, $r=1,2$, $i=1,\ldots, n$, if $\vectpair{\alpha_1}{\beta_1} =^a (\ovr{\alpha_1}_{\uparrow}, \ovr{\beta_1}_{\downarrow})$, $\vectpair{\alpha_2}{\beta_2} =^a (\ovr{\alpha_2}_{\uparrow}, \ovr{\beta_2}_{\downarrow})$, $\ovr{\alpha_1}\prec \ovr{\alpha_2}$ and $\ovr{\beta_1}\prec^w \ovr{\beta_2}$, \cite{Zhao2010pascal} and \cite{Zhao2011} obtained the likelihood ratio ordering for convolutions of gamma and Pascal variables.   
\end{remark}

\begin{remark}\label{rem:alternate-wrcm}
Without loss of generality, assume $y_{1i} > y_{1j}$ and $(y_{1i}, y_{1j})\prec^w (y_{2i}, y_{2j})$. We can find $z\geq y_{1i}$ such that $(z, y_{1j})\prec (y_{2i}, y_{2j})$. Using this idea, it is easy to see that the following relations generate a partial order that is equivalent to $\wrcm$:
\begin{enumerate}

\item[(i)] Under condition \eqref{eqn:reverse_pair},
\begin{align*}
& (x_{1i}, x_{1j}) \prec_w (x_{2i}, x_{2j}), \quad x_{1m} = x_{2m}, m\neq i, j;\\
& \ovr{y_1} = \ovr{y_2} 
\end{align*}

\item[(ii)] Under condition \eqref{eqn:reverse_pair}, 
\begin{align*}
& (y_{1i}, y_{1j}) \prec^w (y_{2i}, y_{2j}), \quad y_{1m} =y_{2m}, m\neq i, j;\\
& \ovr{x_1} = \ovr{x_2}  
\end{align*}

\item[(iii)]
\[x_{1i} \leq x_{2i},\quad i=1,\ldots, n; \quad \ovr{y_1} = \ovr{y_2}\]

\item[(iv)]
\[y_{1i} \geq y_{2i},\quad i=1,\ldots, n; \quad \ovr{x_1} = \ovr{x_2} \]
\end{enumerate}
It should be noted that (iii) and (iv) does not follow from (i) and (ii) because of the restriction \eqref{eqn:reverse_pair}.
\end{remark}

\section{the Usual Stochastic Order}\label{sec:st-order}

We prove an analogue of Proposition \ref{majorize-beta},

\begin{proposition}
\begin{align*}
& \alpha > 0, \quad (\log p_{11}, \log p_{12})\prec (\log p_{21}, \log p_{22}) \\  
& \implies N_{\alpha, p_{11}} + N_{\alpha, p_{12}} \leq_{st} N_{\alpha, p_{21}} + N_{\alpha, p_{22}} 
\numberthis\label{eqn:log-prec2} 
\end{align*}
\end{proposition}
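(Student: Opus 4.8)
The plan is to follow the proof of Proposition~\ref{majorize-beta}, but with the ``additive'' spreading $c_0\pm\lambda$ there replaced by the multiplicative spreading that log-majorization forces, and with Proposition~\ref{compound-negative-binomial-pair} replaced by two separate applications of Proposition~\ref{compound-negative-binomial}. First I would put the hypothesis in normal form: relabeling within each pair (which changes neither $N_{\alpha,p_{11}}+N_{\alpha,p_{12}}$ nor $N_{\alpha,p_{21}}+N_{\alpha,p_{22}}$), assume $p_{11}\geq p_{12}$ and $p_{21}\geq p_{22}$. For two-component vectors, $(\log p_{11},\log p_{12})\prec(\log p_{21},\log p_{22})$ with these orderings is precisely the statement that $\log p_{21}\geq\log p_{11}$ together with $\log p_{11}+\log p_{12}=\log p_{21}+\log p_{22}$; hence $p_{21}=cp_{11}$ and $p_{22}=p_{12}/c$ for some $c\geq 1$. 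If $c=1$ the two convolutions have the same law and there is nothing to prove, so assume $c>1$, noting $0<1/c<1$ and $0<cp_{11}=p_{21}<1$. Shifting both sides by the constant $2\alpha$, it then suffices to prove $\wN_{\alpha,p_{11}}+\wN_{\alpha,p_{12}}\leq_{st}\wN_{\alpha,cp_{11}}+\wN_{\alpha,p_{12}/c}$.

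Next, writing $p_{11}=(cp_{11})\cdot(1/c)$ and $p_{12}/c=p_{12}\cdot(1/c)$, Proposition~\ref{compound-negative-binomial} gives $\wN_{\alpha,p_{11}}\stackrel{st}{=}\wN_{\wL_1,cp_{11}}$ and $\wN_{\alpha,p_{12}/c}\stackrel{st}{=}\wN_{\wL_2,p_{12}}$, where $\wL_1,\wL_2$ are independent copies of the shifted negative binomial variable with shape $\alpha$ and success probability $1/c$. Expanding each compound as in the proof of Proposition~\ref{majorize-beta} (via Lemma~\ref{shifted-nb-inf-div}), $\wN_{\wL_1,cp_{11}}=\wN_{\alpha,cp_{11}}+\wN_{L_1,cp_{11}}$ and $\wN_{\wL_2,p_{12}}=\wN_{\alpha,p_{12}}+\wN_{L_2,p_{12}}$, where $L_r$ denotes the unshifted $N_{\alpha,1/c}$ and, on each side, the first summand is independent of the pair $(L_r,\wN_{L_r,\cdot})$. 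Rearranging,
\begin{align*}
\wN_{\alpha,p_{11}}+\wN_{\alpha,p_{12}} &\stackrel{st}{=} \bigl(\wN_{\alpha,cp_{11}}+\wN_{\alpha,p_{12}}\bigr)+\wN_{L_1,cp_{11}},\\
\wN_{\alpha,cp_{11}}+\wN_{\alpha,p_{12}/c} &\stackrel{st}{=} \bigl(\wN_{\alpha,cp_{11}}+\wN_{\alpha,p_{12}}\bigr)+\wN_{L_2,p_{12}},
\end{align*}
and in each line the parenthesized block (a sum of two independent shifted negative binomials, with the same distribution in both lines) is independent of the trailing term. Since adding an independent summand preserves $\leq_{st}$ and shifting by $2\alpha$ preserves $\leq_{st}$, the whole statement reduces to $\wN_{L_1,cp_{11}}\leq_{st}\wN_{L_2,p_{12}}$.

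For this last comparison the decisive point — and the one I expect to be the crux — is that $L_1$ and $L_2$ have the same distribution; this is exactly where log-majorization (equality of the products $p_{11}p_{12}=p_{21}p_{22}$) enters, in contrast with Proposition~\ref{majorize-beta}, where the analogous auxiliary factors differed and produced an extra honest nonnegative summand and hence $\leq_{conv}$. Granting this, both $\wN_{L_1,cp_{11}}$ and $\wN_{L_2,p_{12}}$ are mixtures over the common law of $N_{\alpha,1/c}$: conditionally on the common value $h$ they are $\wN_{h,cp_{11}}$ and $\wN_{h,p_{12}}$, and since $cp_{11}\geq p_{11}\geq p_{12}$ Lemma~\ref{lem:alpha-beta-lr-order} gives $N_{h,cp_{11}}\leq_{lr}N_{h,p_{12}}$, hence $\leq_{st}$, hence $\wN_{h,cp_{11}}\leq_{st}\wN_{h,p_{12}}$ for every $h$; mixing over the common weight (as in Lemma~\ref{lem:st-mixture}) yields $\wN_{L_1,cp_{11}}\leq_{st}\wN_{L_2,p_{12}}$. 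The only other place needing care is the independence bookkeeping in the two compound expansions — verifying that the ``common'' block really is independent of the trailing term on each side — but that is exactly the construction already carried out in the proof of Proposition~\ref{majorize-beta}.
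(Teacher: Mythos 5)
Your proof is correct, but it follows a genuinely different route from the paper's. The paper first pushes \emph{both} convolutions into a single shape mixture over the common (largest) success probability $p_{22}$, using Proposition \ref{compound-negative-binomial} with $p_1=p_{11}/p_{22}$, $p_2=p_{12}/p_{22}$, $p_3=p_{21}/p_{22}=p_1p_2$; Lemma \ref{lem:st-mixture} then reduces the claim to comparing the weight variables, $\wL_{\alpha,p_1}+\wL_{\alpha,p_2}\leq_{st}\alpha+\wL_{\alpha,p_3}$, which is settled by the iterated-compounding identity $\wL_{\alpha,p_1p_2}\stackrel{st}{=}\wL_{\wL_{\alpha,p_1},p_2}$, whose expansion exhibits the right side as the left side plus a nonnegative but \emph{dependent} term $L_{L_{\alpha,p_1},p_2}$ (this is exactly where the paper remarks that the conclusion cannot be upgraded to $\leq_{conv}$). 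You instead normalize multiplicatively ($p_{21}=cp_{11}$, $p_{22}=p_{12}/c$), compound only the two smaller success probabilities over independent weights $L_1,L_2\sim N_{\alpha,1/c}$ (same law precisely because the log-sums agree), peel off an identically distributed independent block, and reduce to $\wN_{L_1,cp_{11}}\leq_{st}\wN_{L_2,p_{12}}$, which you settle conditionally on the common weight value via Lemma \ref{lem:alpha-beta-lr-order} ($cp_{11}\geq p_{11}\geq p_{12}$) and then mix. Two small bookkeeping points, neither a gap: the final mixing step is not literally Lemma \ref{lem:st-mixture} (which fixes one kernel and varies the weight) but the equally standard fact that pointwise $\leq_{st}$-ordered kernels integrated against a common weight law remain ordered, which you can verify in one line by summing survival functions against the pmf of $N_{\alpha,1/c}$; and the reduction to the trailing terms uses Lemma \ref{st-additive} with within-pair independence, which your construction does provide. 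What each approach buys: the paper's argument parallels its convolution-order proofs (Propositions \ref{majorize-beta} and \ref{compound-negative-binomial-pair}) and makes visible why only $\leq_{st}$ survives, while yours avoids the iterated compounding altogether and isolates the role of log-majorization as ``equal products $\Rightarrow$ equal mixing laws,'' at the cost of needing the extra conditional comparison in the success probability.
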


\begin{proof}
Shifting by the same constant $2\alpha$, it suffices to prove
\[\wN_{\alpha, p_{11}} + \wN_{\alpha, p_{12}} \leq_{st} \wN_{\alpha, p_{21}} + \wN_{\alpha, p_{22}}\]

Without loss of generality, assume $p_{21} \leq p_{11}\leq p_{12}\leq p_{21}$. Denote $p_1:=\frac{p_{11}}{p_{22}}$, $p_2:=\frac{p_{12}}{p_{22}}$, $p_{3}:=\frac{p_{21}}{p_{22}}$. Then $p_3=p_1p_2$ if $(\log p_{11}, \log p_{12})\prec (\log p_{21}, \log p_{22})$. By Proposition \ref{compound-negative-binomial}, \[
\wN_{\alpha, p_{11}} + \wN_{\alpha, p_{12}} = \wN_{\wL_{\alpha, p_1}, p_{22}} + \wN_{\wL_{\alpha, p_2}, p_{22}} = \wN_{\wL_{\alpha, p_1} + \wL_{\alpha, p_2}, p_{22}}
\] 

Similarly, \[
\wN_{\alpha, p_{21}} + \wN_{\alpha, p_{22}} = \wN_{\wL_{\alpha, p_3} + \alpha, p_{22}} 
\]

Therefore by Lemma \ref{lem:st-mixture}, it suffices to prove 
\begin{equation}\label{eqn:st-prod}
\wL_{\alpha, p_1} + \wL_{\alpha, p_2} \leq_{st} \wL_{\alpha, p_3} + \alpha 
\end{equation}

By Proposition \ref{compound-negative-binomial} again, 
\begin{align*}
& \alpha + \wL_{\alpha, p_3} = \alpha + \wL_{\wL_{\alpha, p_1}, p_2} \\
& =  \alpha + \wL_{\alpha + L_{\alpha, p_1}, p_2}\\
& =  \alpha + \wL_{L_{\alpha, p_1}, p_2} + \wL_{\alpha, p_2}\\
& =  \alpha + L_{\alpha, p_1} + L_{L_{\alpha, p_1}, p_2} + \wL_{\alpha, p_2}\\
& =  \wL_{\alpha, p_1} + \wL_{\alpha, p_2} + L_{L_{\alpha, p_1}, p_2}\\
& \geq_{st} \wL_{\alpha, p_1} + \wL_{\alpha, p_2} \numberthis\label{eqn:st-last-line}
\end{align*}
where the last step of \eqref{eqn:st-last-line} is true because $\wL_{\alpha, p_2}\geq 0$.
\end{proof}

\begin{remark}
In the proof of \eqref{eqn:st-last-line}, we notice a key difference between the usual stochastic order and the convolution order: since $L_{L_{\alpha, p_1}, p_2}$ is not independent from $L_{\alpha, p_1}$, we can not strengthen the usual stochastic ordering in \ref{eqn:log-prec2} to the convolution order. 
\end{remark}
The following fact is evident from the definition,
\begin{lemma}\label{st-additive}
\begin{equation}
X_1\leq_{st} Y_1,\quad X_2 \leq_{st} Y_2 \implies X_1 + X_2\leq_{st} Y_1+Y_2
\end{equation} 
\end{lemma}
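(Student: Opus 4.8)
The plan is to reduce to the one-variable characterization of $\leq_{st}$ recalled in the introduction, namely that $U \leq_{st} V$ if and only if $E[\phi(U)] \leq E[\phi(V)]$ for every increasing $\phi$ (for which the integrals are finite). So fix such a $\phi$; it suffices to show $E[\phi(X_1+X_2)] \leq E[\phi(Y_1+Y_2)]$, where $X_1,X_2$ are independent and $Y_1,Y_2$ are independent, as per the standing convention in the paper.

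First I would integrate out $X_1$: by independence of $X_1$ and $X_2$ and Fubini, $E[\phi(X_1+X_2)] = E\bigl[g(X_2)\bigr]$ with $g(t) := E[\phi(X_1+t)]$. For each fixed $t$ the map $x \mapsto \phi(x+t)$ is increasing, so $X_1 \leq_{st} Y_1$ gives $g(t) \leq h(t)$, where $h(t) := E[\phi(Y_1+t)]$; moreover $h$ is itself an increasing function of $t$ since $\phi$ is. Hence $E[\phi(X_1+X_2)] = E[g(X_2)] \leq E[h(X_2)]$, and applying $X_2 \leq_{st} Y_2$ to the increasing function $h$ yields $E[h(X_2)] \leq E[h(Y_2)]$. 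Finally, integrating out $Y_1$ using independence of $Y_1$ and $Y_2$ gives $E[h(Y_2)] = E[\phi(Y_1+Y_2)]$, which closes the chain.

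An alternative, slightly slicker route is via the coupling characterization of $\leq_{st}$ (already implicit in the paper's defining phrase ``$X_2$ has the same distribution as $X_1+Z$ with $Z$ nonnegative''): for $i=1,2$ realize a joint law of $(\hat X_i,\hat Y_i)$ with $\hat X_i \stackrel{d}{=} X_i$, $\hat Y_i \stackrel{d}{=} Y_i$ and $\hat X_i \leq \hat Y_i$ almost surely, and build the two couplings on a common probability space so that they are mutually independent. Then $\hat X_1+\hat X_2 \leq \hat Y_1+\hat Y_2$ pointwise, while the independence hypotheses identify the two sides in distribution with $X_1+X_2$ and $Y_1+Y_2$, giving $X_1+X_2 \leq_{st} Y_1+Y_2$.

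There is essentially no genuine obstacle; the statement is flagged as ``evident'' precisely because both arguments are routine. The only points that need care are the bookkeeping with independence — using independence of $X_1,X_2$ (resp.\ $Y_1,Y_2$) to replace the bivariate expectation by an iterated one — and the observation that the partially-evaluated function $h$ is still increasing, which is what licenses the second stochastic comparison. Since this lemma is used only as a building block alongside Lemma~\ref{conv-additive}, I would keep the write-up to the short expectation computation above.
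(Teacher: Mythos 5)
Your proposal is correct. The paper offers no proof at all --- it states the lemma as ``evident from the definition'' --- and your two arguments are precisely the routine verifications being alluded to: the conditioning/Fubini computation with the increasing-function characterization of $\leq_{st}$, and the coupling argument, the latter being closest in spirit to the paper's defining phrase ``$X_2 \stackrel{st}{=} X_1 + Z$ with $Z$ nonnegative.'' You also correctly track the one hypothesis that matters, namely independence within each pair $(X_1,X_2)$ and $(Y_1,Y_2)$ (the paper's standing convention), without which the statement can fail; the only cosmetic refinement would be to run the expectation argument with bounded increasing $\phi$ (or tail indicators) to sidestep integrability caveats, but the coupling route already avoids this entirely.
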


Using Lemma \ref{st-additive} and similar ideas from the proof of Proposition \ref{diff-alpha-majorize-beta}, we can show the following:

\begin{proposition}\label{diff-alpha-majorize-beta-st}
\begin{align*}
& 0\leq \alpha_1 \leq \alpha_2,\\
& p_{11} \geq p_{12}, p_{21} \geq p_{22}, \quad (\log p_{11}, \log p_{12}) \prec (\log p_{21}, \log p_{22})\implies \\
& N_{\alpha_1, p_{11}} + N_{\alpha_2, p_{12}} \leq_{conv} N_{\alpha_1, p_{21}} + N_{\alpha_2, p_{22}} 
\end{align*}
\end{proposition}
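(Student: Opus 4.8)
The plan is to imitate the proof of Proposition \ref{diff-alpha-majorize-beta} almost verbatim, with ordinary majorization of the success probabilities replaced by log-majorization and with the convolution-order building block replaced by its stochastic-order analogue \eqref{eqn:log-prec2}; this is exactly the recipe announced in the sentence preceding the statement (``Using Lemma \ref{st-additive} and similar ideas from the proof of Proposition \ref{diff-alpha-majorize-beta}''). First I would unpack the hypotheses. Since both pairs are written in decreasing order ($p_{11}\ge p_{12}$, $p_{21}\ge p_{22}$), the relation $(\log p_{11},\log p_{12})\prec(\log p_{21},\log p_{22})$ is equivalent to $p_{11}p_{12}=p_{21}p_{22}$ together with $p_{11}\le p_{21}$; the latter forces $p_{12}=p_{21}p_{22}/p_{11}\ge p_{22}$, and this inequality is precisely what lets the ``excess shape'' factor be compared.

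Next I would separate the excess shape $\alpha_2-\alpha_1\ge 0$ using infinite divisibility \eqref{eqn:nb-inf-div}:
\begin{align*}
N_{\alpha_1,p_{11}}+N_{\alpha_2,p_{12}} &= \bigl(N_{\alpha_1,p_{11}}+N_{\alpha_1,p_{12}}\bigr)+N_{\alpha_2-\alpha_1,p_{12}},\\
N_{\alpha_1,p_{21}}+N_{\alpha_2,p_{22}} &= \bigl(N_{\alpha_1,p_{21}}+N_{\alpha_1,p_{22}}\bigr)+N_{\alpha_2-\alpha_1,p_{22}}.
\end{align*}
For the excess factor, $p_{12}\ge p_{22}$ and Proposition \ref{lower-beta} give $N_{\alpha_2-\alpha_1,p_{12}}\leq_{conv}N_{\alpha_2-\alpha_1,p_{22}}$. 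For the paired factor (common shape $\alpha_1$), the log-majorization hypothesis feeds directly into \eqref{eqn:log-prec2} to yield $N_{\alpha_1,p_{11}}+N_{\alpha_1,p_{12}}\leq_{st}N_{\alpha_1,p_{21}}+N_{\alpha_1,p_{22}}$. Because the paired factor and the excess factor are independent on each side, I would then combine the two comparisons by Lemma \ref{st-additive} (using that $\leq_{conv}$ implies $\leq_{st}$) to order the two three-term sums, and hence $N_{\alpha_1,p_{11}}+N_{\alpha_2,p_{12}}$ against $N_{\alpha_1,p_{21}}+N_{\alpha_2,p_{22}}$.

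The step I expect to be the main obstacle is the paired factor, and it is the reason the assembly must be routed through Lemma \ref{st-additive} rather than the stronger Lemma \ref{conv-additive}. As the remark after \eqref{eqn:st-last-line} records, the log-majorization comparison \eqref{eqn:log-prec2} is genuinely only a stochastic-order statement: the mixture representation that proves it introduces a term $L_{L_{\alpha,p_1},p_2}$ that is not independent of $L_{\alpha,p_1}$, which blocks any promotion of that step to the convolution order. Since the excess factor already lies in $\leq_{conv}\subset\leq_{st}$, the comparison this argument produces for the full sums is the usual stochastic order; this is consistent with the section heading, with the invocation of Lemma \ref{st-additive} in the text, and with the fact that the specialization $\alpha_1=\alpha_2$ collapses exactly to \eqref{eqn:log-prec2}, so the conclusion delivered is the $\leq_{st}$ ordering of the two convolutions.
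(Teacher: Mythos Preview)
Your expansion is correct and follows exactly the route the paper indicates (``Using Lemma \ref{st-additive} and similar ideas from the proof of Proposition \ref{diff-alpha-majorize-beta}''): split off the excess shape $\alpha_2-\alpha_1$ via \eqref{eqn:nb-inf-div}, compare the excess factor by Proposition \ref{lower-beta}, compare the equal-shape pair by \eqref{eqn:log-prec2}, and assemble with Lemma \ref{st-additive}. You are also right that the argument only yields $\leq_{st}$, not the $\leq_{conv}$ printed in the statement; this is consistent with the section heading, the invocation of Lemma \ref{st-additive}, and the remark after \eqref{eqn:st-last-line}, so the ``$\leq_{conv}$'' in the displayed conclusion is evidently a typo for ``$\leq_{st}$''.
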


Proposition \ref{raise-alpha}, \ref{lower-beta} and \ref{majorize-alpha} readily apply to the usual stochastic order, which is implied by the convolution order. Moreover, we can replace $p_{ri}$ with $\log p_{ri}$, $r,i = 1,2$ in the conditions of these propositions, since $\log$ is an increasing function. Following ideas in the proofs of Theorem \ref{thm:conv-general} and Theorem \ref{conv-conditons-gammma}, we readily obtain the following result:
\begin{theorem}\label{st-order-both}
\[(\ovr{\alpha_1}, \log \ovr{p_1}) \wrcm (\ovr{\alpha_2}, \log \ovr{p_2}) \implies \cN_1\leq_{st}\cN_2\]
where $\log \ovr{p}=(\log p_1, \ldots, \log p_n)$.
\[(\ovr{\alpha_1}, \log \ovr{\beta_1}) \wrcm (\ovr{\alpha_2}, \log \ovr{\beta_2}) \implies \cG_1\leq_{st}\cG_2\]
\end{theorem}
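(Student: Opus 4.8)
The plan is to follow the template of Theorems \ref{thm:conv-general} and \ref{conv-conditons-gammma}, replacing $\ovr{p}$ everywhere by $\log\ovr{p}$ and invoking the $\log$-versions of the two-coordinate lemmas. First I would reduce the negative binomial statement to the generating relations (i)--(iv) of $\wrcm$ in Definition \ref{def:rcm} applied to the pairs $(\ovr{\alpha_1},\log\ovr{p_1})$ and $(\ovr{\alpha_2},\log\ovr{p_2})$, chaining the resulting comparisons by Lemma \ref{st-additive} and transitivity of $\leq_{st}$. Relations (iii) and (iv) are immediate from Propositions \ref{raise-alpha} and \ref{lower-beta} together with Lemma \ref{conv-additive} (they even give $\leq_{conv}$), because $\log$ is monotone, so $\log p_{1i}\geq\log p_{2i}$ is the same as $p_{1i}\geq p_{2i}$. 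Relation (i) is handled verbatim as in the proof of Theorem \ref{thm:conv-general}: the reverse-coupling condition $(\log p_{2j}-\log p_{2i})(\alpha_{2j}-\alpha_{2i})\leq 0$ is literally $(p_{2j}-p_{2i})(\alpha_{2j}-\alpha_{2i})\leq 0$, and $\log\ovr{p_1}=\log\ovr{p_2}$ is $\ovr{p_1}=\ovr{p_2}$, so Proposition \ref{majorize-alpha} (after possibly using Theorem \ref{thm:conv-AI} to fix the arrangement of $\ovr{\alpha_1}$) plus Lemma \ref{conv-additive} give $\cN_1\leq_{conv}\cN_2$, hence $\cN_1\leq_{st}\cN_2$. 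Thus the only genuinely new ingredient is relation (ii).

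For relation (ii) I would argue as follows. After relabelling coordinates (legitimate since $\cN$ is symmetric) I may assume $1\le i<j\le n$ with $\alpha_{2i}\le\alpha_{2j}$; the reverse-coupling condition then forces $\log p_{2i}\ge\log p_{2j}$, i.e. $p_{2i}\ge p_{2j}$, while $\ovr{\alpha_1}=\ovr{\alpha_2}$, $(\log p_{1i},\log p_{1j})\prec(\log p_{2i},\log p_{2j})$, and $\log p_{1m}=\log p_{2m}$ for $m\ne i,j$. If in addition $p_{1i}\ge p_{1j}$, then Proposition \ref{diff-alpha-majorize-beta-st} (applied with the roles $\alpha_1\mapsto\alpha_{2i}$, $\alpha_2\mapsto\alpha_{2j}$, $p_{11}\mapsto p_{1i}$, $p_{12}\mapsto p_{1j}$, $p_{21}\mapsto p_{2i}$, $p_{22}\mapsto p_{2j}$) gives $N_{\alpha_{2i},p_{1i}}+N_{\alpha_{2j},p_{1j}}\leq_{st} N_{\alpha_{2i},p_{2i}}+N_{\alpha_{2j},p_{2j}}$, and Lemma \ref{st-additive} applied to the remaining untouched coordinates yields $\cN_1\leq_{st}\cN_2$. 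If instead $p_{1i}<p_{1j}$, I would first interchange the two $p$-values: with $\ovr{p_1}'$ defined by $p'_{1i}=p_{1j}$, $p'_{1j}=p_{1i}$, and $p'_{1m}=p_{1m}$ otherwise, one checks from \eqref{ai-relation} that $(\ovr{\alpha_1},\ovr{p_1}')\leq^a\vectpair{\alpha_1}{p_1}$, so Theorem \ref{thm:conv-AI} gives $\cN_1\leq_{conv}\cN_{\ovr{\alpha_1},\ovr{p_1}'}$; since $(\log p'_{1i},\log p'_{1j})$ is a permutation of $(\log p_{1i},\log p_{1j})$ it still satisfies $(\log p'_{1i},\log p'_{1j})\prec(\log p_{2i},\log p_{2j})$, and now $p'_{1i}\ge p'_{1j}$, so the previous case gives $\cN_{\ovr{\alpha_1},\ovr{p_1}'}\leq_{st}\cN_2$. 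Chaining and using $\leq_{conv}\Rightarrow\leq_{st}$ completes relation (ii), and hence the negative binomial half of the theorem.

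For the gamma half I would transfer exactly as in Theorem \ref{conv-conditons-gammma}. Pick $\beta>\max_{r,i}\beta_{ri}$ and set $p_{ri}=\beta_{ri}/\beta$; Proposition \ref{prop:gamma-mix-more} gives $\cG_r\stackrel{st}{=}G_{R_r+\cN_r,\beta}$ with $R_r=\sum_i\alpha_{ri}$ and $\cN_r=\sum_iN_{\alpha_{ri},p_{ri}}$. Since $\log\ovr{\beta_r}=\log\ovr{p_r}+(\log\beta)\mathbf{1}$, translating the common reference vector by a constant preserves all the $\prec$, $\prec_w$, $\prec^w$ relations and the reverse-coupling inequality, so $(\ovr{\alpha_1},\log\ovr{\beta_1})\wrcm(\ovr{\alpha_2},\log\ovr{\beta_2})$ is equivalent to $(\ovr{\alpha_1},\log\ovr{p_1})\wrcm(\ovr{\alpha_2},\log\ovr{p_2})$, whence $\cN_1\leq_{st}\cN_2$ by the first part; moreover $\wrcm$ implies $\ovr{\alpha_1}\prec_w\ovr{\alpha_2}$, so $R_1\le R_2$.

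The main obstacle is precisely this last step. Unlike in the convolution case, we cannot write $\cN_2=\cN_1+Z$ with $Z$ independent of $\cN_1$ (this is the obstruction noted in the remark after \eqref{eqn:log-prec2}, which is what prevents strengthening $\leq_{st}$ back to $\leq_{conv}$ here). Instead I would use Lemma \ref{st-additive} to obtain $R_1+\cN_1\leq_{st}R_2+\cN_2$ and then invoke the MLR shape-mixture Lemma \ref{lem:st-mixture}, together with the monotone likelihood ratio property $G_{\theta_1,\beta}\leq_{lr}G_{\theta_2,\beta}$ for $\theta_1\le\theta_2$ from Lemma \ref{lem:alpha-beta-lr-order} (and $\leq_{lr}\Rightarrow\leq_{st}$), to conclude $\cG_1\stackrel{st}{=}G_{R_1+\cN_1,\beta}\leq_{st}G_{R_2+\cN_2,\beta}\stackrel{st}{=}\cG_2$. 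Beyond this, the only points needing care are the arrangement bookkeeping in the swap step of relation (ii) and the verification of the hypotheses of Proposition \ref{diff-alpha-majorize-beta-st}; everything else is a routine transcription of the convolution-order arguments with $p$ replaced by $\log p$.
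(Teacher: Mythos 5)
Your proof is correct and follows essentially the same route the paper intends: the paper's own argument is just the observation that Propositions \ref{raise-alpha}, \ref{lower-beta}, \ref{majorize-alpha} and \ref{diff-alpha-majorize-beta-st} apply in the $\leq_{st}$ setting with $p$ replaced by $\log p$ (the generating relations (i)--(iv) being insensitive to the monotone reparametrization), followed by the chaining arguments of Theorems \ref{thm:conv-general} and \ref{conv-conditons-gammma}. Your substitution of the independent-increment splitting used in Theorem \ref{conv-conditons-gammma} by Lemma \ref{st-additive} together with the shape-mixture step (Lemma \ref{lem:st-mixture} plus the MLR property in Lemma \ref{lem:alpha-beta-lr-order}) is precisely the point the paper leaves implicit, and you handle it correctly.
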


\begin{remark}
Since $\log$ is an increasing concave function, by \cite[Theorem I.5.A.2]{Marshall2011}, 
\[
(p_{11}, p_{12})\prec (p_{21}, p_{22}) \implies (\log p_{11}, \log p_{12})\prec^w (\log p_{21}, \log p_{22}) 
\]
Therefore from Remark \ref{rem:alternate-wrcm}, it is easy to see  
\[(\ovr{\alpha_1}, \ovr{p_1}) \wrcm (\ovr{\alpha_2}, \ovr{p_2}) \implies (\ovr{\alpha_1}, \log \ovr{p_1}) \wrcm (\ovr{\alpha_2}, \log \ovr{p_2})\]
Since the convolution order is a strong ordering that implies the usual stochastic order, it is not surprising that the conditions in Theorem \ref{thm:conv-general}, \ref{conv-conditons-gammma} are stronger than that in Theorem \ref{st-order-both}. 
\end{remark}

Similar to Corollary \ref{cor:weak-prec}, we have the following:
\begin{corollary}
\label{cor:weak-prec-st}
Suppose $\vectpair{\alpha_2}{p_2} =^a (\ovr{\alpha_2}_{\uparrow}, \ovr{p_2}_{\downarrow})$,
\[\ovr{\alpha_1} \prec_w \ovr{\alpha_2}, \quad \log\ovr{p_1} \prec^w \log\ovr{p_2} \implies \cN_1\leq_{st} \cN_2\]

Similarly, if $\vectpair{\alpha_2}{\beta_2} =^a (\ovr{\alpha_2}_{\uparrow}, \ovr{\beta_2}_{\downarrow})$,
\[\ovr{\alpha_1} \prec_w \ovr{\alpha_2}, \quad \log\ovr{\beta_1} \prec^w \log\ovr{\beta_2} \implies \cG_1\leq_{st} \cG_2\]
\end{corollary}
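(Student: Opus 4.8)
The plan is to deduce this corollary from Theorem \ref{st-order-both} in exactly the way Corollary \ref{cor:weak-prec} is deduced from Theorem \ref{thm:conv-general}, with Theorem \ref{thm:weak-prec-cond} as the bridge — the only new ingredient being that applying $\log$ coordinatewise is compatible with the constructions in Theorem \ref{thm:weak-prec-cond}. First I would record that, since $t\mapsto \log t$ is strictly increasing on $(0,\infty)$, a decreasing sequence of $p$'s maps to a decreasing sequence of $\log p$'s, so $\log(\ovr{p_2}_{\downarrow}) = (\log\ovr{p_2})_{\downarrow}$ (and likewise for $\ovr{\beta_2}$); the unambiguous definition of $\ovr{x}_{\downarrow}$ as the sorted vector means possible ties in the coordinates of $\ovr{p_2}$ cause no trouble. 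Consequently, if $\vectpair{\alpha_2}{p_2} =^a (\ovr{\alpha_2}_{\uparrow}, \ovr{p_2}_{\downarrow})$ via a permutation $\pi$ carrying $\ovr{\alpha_2}_{\uparrow}$ to $\ovr{\alpha_2}$ and $\ovr{p_2}_{\downarrow}$ to $\ovr{p_2}$, then the same $\pi$ carries $(\log\ovr{p_2})_{\downarrow}$ to $\log\ovr{p_2}$, so $(\ovr{\alpha_2}, \log\ovr{p_2}) =^a (\ovr{\alpha_2}_{\uparrow}, (\log\ovr{p_2})_{\downarrow})$. Thus the standing hypothesis of Theorem \ref{thm:weak-prec-cond} holds with $\ovr{x_2} = \ovr{\alpha_2}$, $\ovr{y_2} = \log\ovr{p_2}$.

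Next I would invoke the weak-majorization half of Theorem \ref{thm:weak-prec-cond} with $\ovr{x_1} = \ovr{\alpha_1}$ and $\ovr{y_1} = \log\ovr{p_1}$: the given $\ovr{\alpha_1}\prec_w \ovr{\alpha_2}$ and $\log\ovr{p_1}\prec^w \log\ovr{p_2}$ yield $(\ovr{\alpha_1}, \log\ovr{p_1})\wrcm (\ovr{\alpha_2}, \log\ovr{p_2})$. Theorem \ref{st-order-both} then gives $\cN_1\leq_{st}\cN_2$ at once. The gamma statement follows by the same argument verbatim, replacing $\ovr{p_r}$ by $\ovr{\beta_r}$ and $\cN_r$ by $\cG_r$ and using the second assertion of Theorem \ref{st-order-both}.

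Since all the substantive work is already contained in Theorems \ref{thm:weak-prec-cond} and \ref{st-order-both}, I do not expect a genuine obstacle here; the one point deserving an explicit line is the commutation $\log(\ovr{p_2}_{\downarrow}) = (\log\ovr{p_2})_{\downarrow}$ that lets the arrangement hypothesis $\vectpair{\alpha_2}{p_2} =^a (\ovr{\alpha_2}_{\uparrow}, \ovr{p_2}_{\downarrow})$ pass to $(\ovr{\alpha_2}, \log\ovr{p_2})$, after which the corollary is purely formal.
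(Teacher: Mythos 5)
Your proposal is correct and is essentially the paper's intended argument: the corollary is obtained, just as Corollary \ref{cor:weak-prec} was, by feeding Theorem \ref{thm:weak-prec-cond} (applied here to $(\ovr{\alpha_1},\log\ovr{p_1})$ and $(\ovr{\alpha_2},\log\ovr{p_2})$, resp.\ the $\log\ovr{\beta_r}$ versions) into Theorem \ref{st-order-both}. Your explicit remark that monotonicity of $\log$ gives $\log(\ovr{p_2}_{\downarrow})=(\log\ovr{p_2})_{\downarrow}$, so the arrangement hypothesis transfers, is exactly the one small point the paper leaves implicit.
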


\begin{remark}
We briefly review some results in the literature that are related to Theorem \ref{st-order-both}. Using a simple change of variables, the following result is obtained in \cite[Theorem III.11.E.8.a]{Marshall2011}: 
\begin{equation}\label{proschan-result}
\ovr{\alpha_1} = \ovr{\alpha_2} = (\alpha, \ldots, \alpha), \quad \alpha > 0; 
\quad \log \overrightarrow{\beta_1} {\prec} \log\overrightarrow{\beta_2} \implies \cG_1 \leq_{st} \cG_2 
\end{equation} 
\eqref{proschan-result} is generalized by \cite{Yu2011weighted}\cite{Xu2011inequalities}\cite{Pan2013II}, which also develop a technique to work with a more general class of continuous distributions that includes the gamma distribution. Let $\lambda_i =\frac{1}{\beta_i}$, $i=1,2,\ldots, n$. It is straightforward to check that
\[\cG = \sum_{i=1}^n \lambda_i G_{\alpha_i, 1} = \sum_{i=1}^n G_{\alpha_i, \beta_i}\]


When $\alpha_1 < \alpha_2 < \ldots < \alpha_n$, by \eqref{eqn:alpha-lr-order} $G_{\alpha_1, 1}\leq_{lr}\ldots \leq_{lr} G_{\alpha_n, 1}$. Following \cite[Theorem 3.3, Remark 3.3]{Xu2011inequalities}, \cite[Corollary 4.7]{Pan2013II} 
\begin{align*}
& \vectpair{\alpha_1}{\beta_1} =^a (\ovr{\alpha_1}_{\uparrow}, \ovr{\beta_1}_{\downarrow}),\quad \vectpair{\alpha_2}{\beta_2} =^a (\ovr{\alpha_2}_{\uparrow}, \ovr{\beta_2}_{\downarrow})\\
& \ovr{\alpha_1} = \ovr{\alpha_2}, \quad \log\ovr{\beta_1}\prec^w \log\ovr{\beta_2} \numberthis \label{pan-conditions}\\
& \implies \cG_{1}\leq_{st} \cG_2
\end{align*}

See also \cite[Theorem 3.3]{Zhao2011} for a condition that leads to the hazard rate order (a stronger order that implies the usual stochastic order) when $\alpha_{ri}\geq 1$, $r=1,2$. In the discrete case, the usual stochastic order for convolutions of Pascal variables is studied in \cite{Boland1994}\cite{Yu2009}\cite{Zhao2010hazard}\cite{Zhao2010pascal} and the references therein. Finally, the arrangement increasing property of certain stochastic orderings is discussed in \cite{Ma2000convex}. 

Corollary \ref{cor:weak-prec-st} and Theorem \ref{st-order-both} extended the above results to more general conditions for convolution of gamma and negative binomial random variables. 
\end{remark}
\begin{corollary}\label{gamma-tail-is-AI}
For any $c>0$, $P(\sum_{i=1}^n \lambda_i G_{\alpha_i, 1} \geq c)$ is an AI function of $\vectpair{\alpha}{\lambda}$.
\end{corollary}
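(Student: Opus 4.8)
The plan is to deduce the corollary from Theorem~\ref{conv-conditons-gammma} by rewriting the weighted sum as a genuine gamma convolution and tracking how the reparametrization $\lambda\leftrightarrow 1/\lambda$ acts on the arrangement order $\leq^a$.

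First I would set $\beta_i:=1/\lambda_i$, so that $\sum_{i=1}^n\lambda_i G_{\alpha_i,1}=\sum_{i=1}^n G_{\alpha_i,\beta_i}=\cG_{\ovr{\alpha},\ovr{\beta}}$; the claim then reads: $(\ovr{\alpha_1},\ovr{\lambda_1})\leq^a(\ovr{\alpha_2},\ovr{\lambda_2})$ implies $P(\cG_1\geq c)\leq P(\cG_2\geq c)$, where $\cG_r=\cG_{\ovr{\alpha_r},\ovr{\beta_r}}$ with $\beta_{ri}=1/\lambda_{ri}$. Next I would check that coordinatewise inversion reverses $\leq^a$ in the second argument: an elementary interchange realizing a step of $\leq^a$ for $(\ovr{\alpha},\ovr{\lambda})$ moves the larger of two $\lambda$-coordinates into the slot carrying the larger $\alpha$, so—since $t\mapsto 1/t$ is strictly decreasing—it moves the \emph{smaller} of the corresponding two $\beta$-coordinates into that slot, which is exactly the reverse of a $\leq^a$-step for $(\ovr{\alpha},\ovr{\beta})$. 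Since the $\alpha$-component (hence the increasing rearrangement invoked in the definition of $\leq^a$) is unchanged and $\ovr{\alpha_1},\ovr{\alpha_2}$ are permutations of one another, this yields
\[
(\ovr{\alpha_1},\ovr{\lambda_1})\leq^a(\ovr{\alpha_2},\ovr{\lambda_2})\iff(\ovr{\alpha_1},\ovr{\beta_1})\geq^a(\ovr{\alpha_2},\ovr{\beta_2}).
\]

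Finally I would chain the implications: by \eqref{ai-implies-rcm}, $(\ovr{\alpha_1},\ovr{\beta_1})\geq^a(\ovr{\alpha_2},\ovr{\beta_2})$ gives $\vectpair{\alpha_1}{\beta_1}\rcm\vectpair{\alpha_2}{\beta_2}$ and a fortiori $\vectpair{\alpha_1}{\beta_1}\wrcm\vectpair{\alpha_2}{\beta_2}$, so Theorem~\ref{conv-conditons-gammma} gives $\cG_1\leq_{conv}\cG_2$, hence $\cG_1\leq_{st}\cG_2$; by the tail characterization of $\leq_{st}$ recalled in the introduction, $P(\cG_1\geq c)\leq P(\cG_2\geq c)$, which is precisely the arrangement-increasing property of the tail in $\vectpair{\alpha}{\lambda}$. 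The computation is routine once the dictionary is fixed; the only point requiring care—and the sole conceptual content beyond quoting Theorem~\ref{conv-conditons-gammma}—is the two order reversals (inversion flips $\leq^a$, and Theorem~\ref{conv-conditons-gammma}, like Theorem~\ref{thm:conv-AI}, sends the top of $\leq^a$ to the bottom of $\leq_{conv}$), which compose to give monotonicity in the stated direction.
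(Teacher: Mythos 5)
Your proof is correct and follows essentially the same route as the paper: observe that the reparametrization $\beta_i=1/\lambda_i$ reverses the arrangement order, then pass from $\geq^a$ to the gamma convolution order and finally to the usual stochastic order to compare tails. The only cosmetic difference is that you make the gamma step explicit via \eqref{ai-implies-rcm} and Theorem~\ref{conv-conditons-gammma}, whereas the paper cites Theorem~\ref{thm:conv-AI} (stated for negative binomials) and implicitly uses its gamma analogue, so your chain of citations is, if anything, slightly more precise.
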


\begin{proof}
Since $\lambda_i =\frac{1}{\beta_i}$ is a decreasing function, it is easy to see \[
\vectpair{\alpha_1}{\lambda_1} \leq^a \vectpair{\alpha_2}{\lambda_2} \iff \vectpair{\alpha_1}{\beta_1} \geq^a \vectpair{\alpha_2}{\beta_2}\]

From Theorem \ref{thm:conv-AI}, 
\begin{align*}
\sum_{i=1}^n \lambda_{1i} G_{\alpha_{1i}, 1} \leq_{conv} \sum_{i=1}^n \lambda_{2i} G_{\alpha_{2i}, 1}\implies \\
\sum_{i=1}^n \lambda_{1i} G_{\alpha_{1i}, 1} \leq_{st} \sum_{i=1}^n \lambda_{2i} G_{\alpha_{2i}, 1}\implies \\
P(\sum_{i=1}^n \lambda_{1i} G_{\alpha_{1i}, 1} \geq c) \leq P\big(\sum_{i=1}^n \lambda_{2i} G_{\alpha_{2i}, 1} \geq c\big)
\end{align*}  
\end{proof}

\begin{remark}\label{rem:gamma-tail}
Corollary \ref{gamma-tail-is-AI} can also be proved using other known results in the literature. By \eqref{eqn:alpha-lr-order} and \cite[Lemma 2.2(c)]{Hollander1977}, the density $\prod_{i=1}^n f_{G_{\alpha_i, 1}}(t_i)$ is AI in $(\ovr{\alpha}, \ovr{t})$. On the other hand, for fixed constant $c$ the indicating function $I_{\sum_{i=1}^n \lambda_i t_i\geq c}$ is AI in $(\ovr{t}, \ovr{\lambda})$. Since the AI property is preserved under convolution, 
\[P(\sum_{i=1}^n \lambda_i G_{\alpha_i, 1} \geq c) = \int_{\sum_{i=1}^n \lambda_i t_i\geq c}\prod_{i=1}^nf_{G_{\alpha_i, 1}}(t_i)dt_1\ldots dt_n\]
is AI in $(\ovr{\alpha}, \ovr{\lambda})$, see \cite[Example 3.2.2]{boland1988AI}. 
\end{remark}

We will further apply results in this article to study other stochastic orderings such as the likelihood ratio order in a subsequent article, which will among other things show that in general, the condition $\vectpair{\alpha_1}{\beta_1} \wrcm \vectpair{\alpha_2}{\beta_2}$ cannot be weakened to $(\ovr{\alpha_1}, \log\ovr{\beta_1}) \wrcm (\ovr{\alpha_2}, \log\ovr{\beta_2})$ for the convolution order.  

\appendix

\section{Proofs}

The following lemma is used in the proofs for statements in Section \ref{sec:section-compound-rv}. We omit its proof here since the verification is direct:
\begin{lemma}
The probability generating function of $\wN_{\alpha, p}$ is 
\begin{equation}\label{mgf-nb}
E(t^{\wN{\alpha, p}})=\sum_{k=0}^{\infty} t^{k+\alpha} P(N=k) =\bigg(\frac{p}{\frac{1}{t}-q}\bigg)^{\alpha}
\end{equation}

The moment generating function is
\begin{equation}\label{shifted-ng-mgf}
E(e^{t\wN{\alpha, p}})=\bigg(\frac{p}{e^{-t}-q}\bigg)^{\alpha}
\end{equation}
\end{lemma}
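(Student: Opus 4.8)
The plan is to obtain both generating functions directly from the defining density \eqref{nbrv-density}, together with Newton's generalized binomial series. Since $\wN_{\alpha,p} = \alpha + N_{\alpha,p}$ is the negative binomial variable shifted by the deterministic constant $\alpha$, we have $E(t^{\wN_{\alpha,p}}) = t^{\alpha}\,E(t^{N_{\alpha,p}})$, so it suffices to compute the probability generating function of $N_{\alpha,p}$ itself.

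Next, I would expand using \eqref{nbrv-density}:
\[
E(t^{N_{\alpha,p}}) = \sum_{k=0}^{\infty} t^{k}\binom{k+\alpha-1}{k}p^{\alpha}q^{k} = p^{\alpha}\sum_{k=0}^{\infty}\binom{k+\alpha-1}{k}(qt)^{k},
\]
and then invoke the identity $\sum_{k=0}^{\infty}\binom{k+\alpha-1}{k}x^{k} = (1-x)^{-\alpha}$, valid for $|x|<1$ and any $\alpha>0$; this is just the generalized binomial theorem for the exponent $-\alpha$, after rewriting $\binom{k+\alpha-1}{k} = (-1)^{k}\binom{-\alpha}{k}$. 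Taking $x = qt$ (legitimate whenever $|qt|<1$, in particular for $t\in[0,1]$ since $q<1$) gives $E(t^{N_{\alpha,p}}) = p^{\alpha}(1-qt)^{-\alpha}$.

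Multiplying by $t^{\alpha}$ and rearranging,
\[
E(t^{\wN_{\alpha,p}}) = \frac{t^{\alpha}p^{\alpha}}{(1-qt)^{\alpha}} = \left(\frac{tp}{1-qt}\right)^{\alpha} = \left(\frac{p}{\tfrac1t-q}\right)^{\alpha},
\]
which is \eqref{mgf-nb}. For the moment generating function one substitutes $e^{s}$ for the formal variable $t$: $E(e^{s\wN_{\alpha,p}}) = E\big((e^{s})^{\wN_{\alpha,p}}\big) = \big(p/(e^{-s}-q)\big)^{\alpha}$, which is \eqref{shifted-ng-mgf}; the underlying series converges exactly when $qe^{s}<1$, i.e. for $s<\log(1/q)$, a neighborhood of $0$ because $q<1$.

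There is no real obstacle: the argument is elementary. The only point deserving a word of care is the appeal to Newton's binomial series for the non-integer exponent $-\alpha$ together with its range of convergence; everything else is algebraic rearrangement plus the observation that a deterministic shift by $\alpha$ multiplies the generating function by $t^{\alpha}$.
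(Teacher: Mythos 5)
Your computation is correct and is exactly the direct verification the paper leaves out (the appendix states the proof is omitted "since the verification is direct"): expand the probability generating function of $N_{\alpha,p}$ via the generalized binomial series, multiply by $t^{\alpha}$ for the deterministic shift, and substitute $t=e^{s}$ for the moment generating function. Your attention to the convergence range $|qt|<1$, i.e.\ $s<\log(1/q)$, is a welcome extra detail.
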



In many cases, we can prove an equality of the law of random variables by proving their moment generating functions are the same. This is justified when the probability law corresponding to a given moment sequence is unique. A sufficient condition is the following, which is satisfied by all probability laws occurring in this article:

\begin{theorem}\cite[Theorem 30.1]{Billingsley1995}\label{analytic-mgf}
Let $\mu$ be a probability measure on the line having finite moments of all orders 
\[
m_k=\int_{-\infty}^{\infty} x^k \mu(dx), \ \ \ \ k=1,2,\ldots
\] 

If the power series $\sum_{k=1}^{\infty}\frac{m_kt^k}{k!}$ has a positive radius of convergence, then $\mu$ is uniquely determined by its moments $m_k$, $k=1,2,\ldots$. Namely, $\mu$ is the only probability measure with moments $m_k$, $k=1,2,\ldots$
\end{theorem}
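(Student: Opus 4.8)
The plan is to deduce determinacy from the classical uniqueness theorem for characteristic functions, the point being that the hypothesis forces the moment generating function — and hence the characteristic function — of $\mu$ to extend to a holomorphic function on a strip about the real axis, where it is completely determined by the moment sequence $(m_k)$.

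First I would reformulate the hypothesis as a coefficient bound and upgrade it to the absolute moments. Saying that $\sum_{k}\frac{m_k t^k}{k!}$ has positive radius of convergence $R$ is the same as saying $\limsup_k (|m_k|/k!)^{1/k}\le 1/R$, i.e. for every $r<R$ there is $C_r$ with $|m_k|\le C_r\,k!\,r^{-k}$ for all $k$. Writing $\beta_k=\int_{-\infty}^{\infty}|x|^k\,\mu(dx)$ for the absolute moments, one has $\beta_{2k}=m_{2k}$ and, by the Cauchy--Schwarz inequality, $\beta_{2k+1}^2\le m_{2k}\,m_{2k+2}$; combined with $(2k)!\,(2k+2)!\le\bigl((2k+1)!\bigr)^2$ this gives $\beta_k\le C_r'\,k!\,r^{-k}$ for $r<R$, so $\sum_k \frac{\beta_k t^k}{k!}<\infty$ for $|t|<R$. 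By Tonelli's theorem $\int e^{t|x|}\,\mu(dx)=\sum_k\frac{\beta_k t^k}{k!}<\infty$ for $0\le t<R$; in particular $\int e^{sx}\,\mu(dx)<\infty$ for all real $s$ with $|s|<R$.

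Next I would pass to complex arguments. For $z\in\C$ with $|\operatorname{Re} z|<R$ the integral $\psi_\mu(z):=\int_{-\infty}^{\infty}e^{zx}\,\mu(dx)$ converges absolutely, and a standard application of Morera's theorem (or differentiation under the integral sign, with dominating function $|x|\,e^{(|\operatorname{Re} z|+\varepsilon)|x|}$) shows that $\psi_\mu$ is holomorphic on the strip $S=\{z:|\operatorname{Re} z|<R\}$. For real $t\in(-R,R)$, dominated convergence permits term-by-term integration of the series for $e^{tx}$, giving $\psi_\mu(t)=\sum_{k\ge0}\frac{m_k t^k}{k!}$ (with $m_0=1$). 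Now let $\nu$ be any probability measure with the same moments $m_k$. The same argument applies verbatim to $\nu$, so $\psi_\nu$ is holomorphic on $S$ and $\psi_\nu(t)=\sum_k\frac{m_k t^k}{k!}=\psi_\mu(t)$ for $t\in(-R,R)$. Since $\psi_\mu$ and $\psi_\nu$ are holomorphic on the connected open set $S$ and agree on a subset with a limit point, the identity theorem forces $\psi_\mu\equiv\psi_\nu$ on $S$; restricting to $z=it$, $t\in\R$, shows that $\mu$ and $\nu$ have the same characteristic function, and hence $\mu=\nu$ by the uniqueness (inversion) theorem for characteristic functions.

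I expect the main obstacle to be the first step: the hypothesis is stated only for the signed moments $m_k$, which could a priori enjoy cancellation, so one has to work a little to see that the absolute moments are still controlled and that $\mu$ therefore has exponential tails. Once exponential integrability is in hand, the holomorphy of $\psi_\mu$ on $S$ and the closing appeal to the identity theorem and Fourier uniqueness are routine. Alternatively one can bypass the complex-analytic step and cite directly the injectivity of the two-sided Laplace transform on measures for which it is finite on an open interval containing $0$; the route above only assumes the elementary uniqueness theorem for characteristic functions.
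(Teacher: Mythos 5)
Your argument is correct in substance, but note that the paper does not prove this statement at all: it is quoted from Billingsley, Theorem 30.1, and used as a black box, so the only ``proof'' in the paper is the citation. Your route --- upgrade the hypothesis on the signed moments to a bound on the absolute moments, deduce $\int e^{t|x|}\,\mu(dx)<\infty$ for $0\le t<R$, extend the two-sided Laplace transform holomorphically to the strip $\{z: |\operatorname{Re} z|<R\}$, and finish with the identity theorem plus uniqueness of characteristic functions --- is a standard and complete proof of the cited result. It differs mildly from Billingsley's own argument, which stays on the real line: he controls $\int |x|^k\,\mu(dx)$ by the even moments in the same way, but then shows the characteristic functions of $\mu$ and $\nu$ are real-analytic with a radius of convergence bounded below uniformly in the base point, and propagates the identity $\phi_\mu=\phi_\nu$ from a neighborhood of $0$ along all of $\R$ step by step. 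The two approaches deliver the same theorem; yours is shorter if one is comfortable invoking holomorphy of $\int e^{zx}\,\mu(dx)$ on a strip, while Billingsley's avoids complex analysis beyond Taylor estimates for $e^{ix}$.

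One small slip: the auxiliary inequality $(2k)!\,(2k+2)!\le \bigl((2k+1)!\bigr)^2$ is false --- the ratio equals $(2k+2)/(2k+1)>1$ --- but it is at most $2$, so Cauchy--Schwarz still gives $\beta_{2k+1}\le \sqrt{2}\,C_r\,(2k+1)!\,r^{-(2k+1)}$, and the conclusion $\beta_k\le C_r'\,k!\,r^{-k}$ for every $r<R$ (hence the exponential integrability you need) survives with an adjusted constant. Nothing downstream is affected.
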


\begin{proof}[Proof of Proposition \ref{compound-negative-binomial}]
Consider the moment generating function of $\wN_{\wL, p_1}$:
\begin{align*}
E(e^{t\wN_{\wL, p_1}}) = & \sum_{k_1 = 0}^{\infty} E(e^{\wN_{\alpha + k_1}, p_1})P(\wL = \alpha + k_1)\\
= & \sum_{k_1=0}^{\infty} \left(\frac{p_1}{e^{-t} - q_1}\right)^{\alpha + k_1}P(\wL = \alpha + k_1)\\
= & E(u^{\wL})\bigg\rvert_{u=\frac{p_1}{e^{-t}-q_1}}\\
= & \left(\frac{p_2}{\frac{1}{u}-q_2}\right)^{\alpha}_{u=\frac{p_1}{e^{-t}-q_1}}\\
= & \left(\frac{p_1p_2}{e^{-t}-(1-p_1p_2)}\right)^{\alpha}\\
= & E(e^{t\wN_{\alpha, p_1p_2}})
\end{align*}


\end{proof}

\begin{proof}[Proof of Proposition \ref{compound-negative-binomial-pair}]
Again we will consider the m.g.f.:
\begin{align*}
& E\big[e^{t(\wN_{\wL, c_0+\lambda_1} + \wN_{\wL, c_0-\lambda_1})}\big] \\
= & \sum_{k=0}^{\infty} E\big[e^{t(\wN_{k+\alpha, c_0+\lambda_1}+\wN_{k+\alpha, c_0-\lambda_1})}\big]P(\wL=k+\alpha)\\
 = & \sum_{k=0}^{\infty} \left(\frac{c_0+\lambda_1}{e^{-t}-(1-c_0-\lambda_1)}\right)^{\alpha+k} \left(\frac{c_0-\lambda_1}{e^{-t}-(1-c_0+\lambda_1)}\right)^{\alpha+k} P(\wL=\alpha + k)\\
= & \sum_{k=0}^{\infty} \left[\frac{c_0^2-\lambda_1^2}{(e^{-t}-1+c_0)^2-\lambda_1^2}\right]^{\alpha+k} P(\wL=\alpha+k) \\ \label{plugin-nb2-2}
 = & \left(\frac{p}{\frac{1}{u}-q}\right)^{\alpha}\bigg\rvert_{u=\frac{c_0^2-\lambda_1^2}{(e^{-t}-1+c_0)^2-\lambda_1^2}}\\ 
 = & \left[\frac{p(c_0^2-\lambda_1^2)}{(e^{-t}-1+c_0)^2-\lambda_1^2-q(c_0^2-\lambda_1^2)}\right]^{\alpha}\\
 = & \left[\frac{c_0^2-\lambda_2^2}{(e^{-t}-1+c_0)^2-\lambda_2^2}\right]^{\alpha}\\
 = & E[e^{t(\wN_{\alpha, c_0+\lambda_2}+\wN_{\alpha, c_0-\lambda_2})}]
\end{align*}
\end{proof}

\begin{proof}[Proof of Proposition \ref{compound-single}]

Given a Gamma variate $G_{\alpha, \beta}$ with shape parameter $\alpha$ and scale parameter $\beta$, it is straightforward to compute that 
\[ 
E(e^{tG})=\bigg(\frac{1}{1-\frac{t}{\beta}}\bigg)^{\alpha} 
\]

We consider the moment generating function of $G_{\wL, \beta}$:
\begin{align*}
E(e^{tG_{\wL, \beta}}) & = \sum_{k=0}^{\infty} E(e^{tG_{\alpha+k, \beta}}) P(\wL = k+\alpha)\\ 
& = \sum_{k=0}^{\infty} \bigg(\frac{1}{1-\frac{t}{\beta}}\bigg)^{\alpha+k}  P(\wL = k+\alpha)\\ 
& = \bigg(\frac{p}{\frac{1}{u}-q}\bigg)^{\alpha}\bigg\rvert_{u=\frac{1}{1-\frac{t}{\beta}}}\\
& = \bigg(\frac{p}{p-\frac{t}{\beta}}\bigg)^{\alpha}\\
& = \bigg(\frac{1}{1-\frac{t}{p\beta}}\bigg)^{\alpha}\\
& = E(e^{tG_{\alpha, p\beta}})
\end{align*}
\end{proof}

We include here an easy corollary of Proposition \ref{compound-negative-binomial-pair} and \ref{compound-single}:
\begin{corollary}
\[
G_{{\wL}, c_0+\lambda_1} + G_{\wL, c_0-\lambda_1} \stackrel{st}{=} G_{{\alpha}, c_0+\lambda_2} + G_{\alpha, c_0-\lambda_2}, \quad 0<\lambda_2 < \lambda_1 < c_0
\]
where $\wL$ has shape parameter $\alpha$, success probability $p=\frac{c_0^2-\lambda_2^2}{c_0^2-\lambda_1^2}$.
\end{corollary}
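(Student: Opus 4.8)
The plan is to deduce the identity purely by combining Proposition \ref{compound-negative-binomial-pair} with Proposition \ref{compound-single} and the gamma additivity \eqref{eqn:gamma-inf-div}, exploiting the fact that a gamma of scale parameter $c$ can be written as a shape mixture of gammas of scale $\beta$ — with mixing governed by a shifted negative binomial of success probability $c/\beta$ — for any $\beta>c$. For notational ease I take $\beta=1$ throughout, as permitted by the standing hypothesis that $c_0\pm\lambda_r\in(0,1)$; for arbitrary scales one simply fixes a common $\beta$ exceeding all of $c_0\pm\lambda_r$ and rescales. In this $\beta=1$ normalization Proposition \ref{compound-single} reads $G_{\wN_{\alpha,\pi},1}\stackrel{st}{=}G_{\alpha,\pi}$ for $\pi\in(0,1)$.

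First I would rewrite the right-hand side as a single shape mixture. Since $G_{\alpha,c_0+\lambda_2}$ and $G_{\alpha,c_0-\lambda_2}$ are independent, Proposition \ref{compound-single} applied to each factor, together with \eqref{eqn:gamma-inf-div} extended from constant to (jointly) random shapes by conditioning, gives
\[
G_{\alpha,c_0+\lambda_2}+G_{\alpha,c_0-\lambda_2}\;\stackrel{st}{=}\;G_{\wN_{\alpha,c_0+\lambda_2},1}+G_{\wN_{\alpha,c_0-\lambda_2},1}\;\stackrel{st}{=}\;G_{\wN_{\alpha,c_0+\lambda_2}+\wN_{\alpha,c_0-\lambda_2},1},
\]
where the two shifted negative binomials are independent. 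Next I would invoke Proposition \ref{compound-negative-binomial-pair}, which is precisely the statement that $\wN_{\alpha,c_0+\lambda_2}+\wN_{\alpha,c_0-\lambda_2}\stackrel{st}{=}\wN_{\wL,c_0+\lambda_1}+\wN_{\wL,c_0-\lambda_1}$ with $\wL$ a shifted negative binomial of shape $\alpha$ and success probability $p=\frac{c_0^2-\lambda_2^2}{c_0^2-\lambda_1^2}$; since an $\stackrel{st}{=}$ identity between shape variables lifts to one between the corresponding scale-$1$ gamma mixtures, the displayed quantity is $\stackrel{st}{=}G_{\wN_{\wL,c_0+\lambda_1}+\wN_{\wL,c_0-\lambda_1},1}$. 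Finally I would unfold this mixture in the opposite direction: conditioning on $\wL=\alpha+k$, splitting the scale-$1$ gamma along its (now constant-plus-NB) shape via \eqref{eqn:gamma-inf-div}, and applying Proposition \ref{compound-single} to each of the two pieces, I obtain $G_{\wN_{\alpha+k,c_0+\lambda_1},1}+G_{\wN_{\alpha+k,c_0-\lambda_1},1}\stackrel{st}{=}G_{\alpha+k,c_0+\lambda_1}+G_{\alpha+k,c_0-\lambda_1}$ with the two gammas independent; un-conditioning over $\wL$ this is exactly $G_{\wL,c_0+\lambda_1}+G_{\wL,c_0-\lambda_1}$, the left-hand side of the corollary, and the chain of $\stackrel{st}{=}$'s closes.

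I do not expect any serious obstacle here — this is why it is billed as an ``easy corollary.'' The only things to be careful about are bookkeeping matters: that \eqref{eqn:gamma-inf-div} extends from constant shapes to jointly random shapes provided the two gammas are conditionally independent given the shapes, and that equality in law of the mixing (shape) variable yields equality in law of the mixture (a direct conditioning argument, or Lemma \ref{lem:st-mixture} read with equalities in place of inequalities). As a self-contained alternative one can bypass Proposition \ref{compound-negative-binomial-pair} entirely and just rerun its moment generating function computation, replacing the shifted-negative-binomial transform \eqref{shifted-ng-mgf}, $(p/(e^{-t}-q))^{\alpha}$, by the gamma transform $(1-t/\beta)^{-\alpha}$ from the proof of Proposition \ref{compound-single} — formally the substitution $e^{-t}\mapsto 1-t$ — using the elementary identity $\bigl(1-\tfrac{t}{c_0+\lambda_r}\bigr)\bigl(1-\tfrac{t}{c_0-\lambda_r}\bigr)=\frac{(c_0-t)^2-\lambda_r^2}{c_0^2-\lambda_r^2}$ and closing with the uniqueness Theorem \ref{analytic-mgf}.
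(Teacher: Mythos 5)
Your argument is correct and essentially identical to the paper's: the paper likewise fixes a common base scale $\beta > c_0+\lambda_1$, uses Proposition \ref{compound-single} to rewrite $G_{\alpha,c_0\pm\lambda_2}$ as shape mixtures $G_{\wN_{\alpha,(c_0\pm\lambda_2)/\beta},\beta}$, merges them by gamma additivity into a single gamma with random shape, applies Proposition \ref{compound-negative-binomial-pair} to that shape variable (the success probability being invariant under the rescaling by $1/\beta$), and unfolds back to $G_{\wL,c_0+\lambda_1}+G_{\wL,c_0-\lambda_1}$. Your $\beta=1$ normalization and the alternative direct m.g.f.\ computation are only cosmetic variations.
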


\begin{proof}
Pick $\beta > c_0 + \lambda_1$, by Proposition \ref{compound-single}, 
\begin{align*}
& G_{{\alpha}, c_0+\lambda_2} + G_{\alpha, c_0-\lambda_2} =  G_{\big(\wN_{\alpha, \frac{1}{\beta}(c_0 -\lambda_2)}\big), \beta } + G_{\big(\wN_{\alpha, \frac{1}{\beta}(c_0 + \lambda_2)}\big), \beta }\\
= & G_{\big(\wN_{\alpha, \frac{1}{\beta}(c_0 -\lambda_2)} + \wN_{\alpha, \frac{1}{\beta}(c_0 + \lambda_2)}\big), \beta}\\
= & G_{\big(\wN_{\wL, \frac{1}{\beta}(c_0 -\lambda_1)} + \wN_{\wL, \frac{1}{\beta}(c_0 + \lambda_1)}\big), \beta} \\
= & G_{{\wL}, c_0+\lambda_1} + G_{\wL, c_0-\lambda_1}
\end{align*}

\end{proof}

\begin{proof}[Proof of Theorem \ref{thm:weak-prec-cond}]
Without loss of generality, assume $\ovr{x_1} = \ovr{x_1}_{\uparrow}$, $\ovr{x_2} = \ovr{x_2}_{\uparrow}$, $\ovr{y_2} = \ovr{y_2}_{\downarrow}$. Define $\ovr{x_1}'$ where 
\[
x'_{1n} = x_{1n} + \sum_{i=1}^n (x_{2i} - x_{1i}) \geq x_{1n}; \quad x'_{1i} = x_{1i}, i = 1, 2,\ldots, n-1\]
Then $\ovr{x_{1}}'\prec \ovr{x_2}$ and $\ovr{x_1}' = \ovr{x_1}'_{\uparrow}$. Similarly, define $\ovr{y_2}'$ where 
\[
y'_{21} = y_{21} + \sum_{i=1}^n (y_{2i} - y_{1i}) \geq y_{21}; \quad y'_{2i} = y_{2i}, i\neq j\]
Then $\ovr{y_{1}}\prec \ovr{y_2}'$ and $\ovr{y_2}' = \ovr{y_2}'_{\downarrow}$. By Relation (iii) and (iv) of Definition \ref{def:rcm}, 
\[
(\ovr{x_{1}}, \ovr{y_1}) \wrcm (\ovr{x_{1}}', \ovr{y_1}),\quad ({\ovr{x_{2}}, \ovr{y_2}'}) \wrcm ({\ovr{x_{2}}, \ovr{y_2}})\] 

Therefore it suffices to prove \eqref{weak-prec-nb}. We will show
\begin{equation}
\label{eqn:tmp1}
(\ovr{x_{1}}', \ovr{y_1}) \rcm (\ovr{x_{1}}', \ovr{y_1}_{\downarrow}) \rcm (\ovr{x_{1}}', \ovr{y_2}') \rcm (\ovr{x_{2}}, \ovr{y_2}')
\end{equation}
Since $\ovr{x_1}'=\ovr{x_1}'_{\uparrow}$, $\ovr{x_2}=\ovr{x_2}_{\uparrow}$, $\ovr{x_1}'\prec \ovr{x_2}$, we consider finitely many vectors $\ovr{\phi_i}$, $i=1,\ldots, k$ as in Lemma \ref{T-transformations}. For $s=1,\ldots, k$, since $\ovr{\phi_s} = \ovr{\phi_s}_{\uparrow}$, $\ovr{y_2}'=\ovr{y_2}'_{\downarrow}$, for any $1\leq i, j\leq n$,
\[(\phi_{si} - \phi_{sj})(y'_{2i} - y'_{2j})\leq 0\]
By Relation (i) of Definition \ref{def:rcm} 
\[(\ovr{\phi_s}, \ovr{y_2}')\rcm (\ovr{\phi}_{s+1}, \ovr{y_2}'), \quad s=1,2,\ldots, k-1\]

By transitivity, $(\ovr{x_{1}}', \ovr{y_2}') \rcm (\ovr{x_{2}}, \ovr{y_2}')$. The proof for $(\ovr{x_{1}}', \ovr{y_1}_{\downarrow}) \rcm (\ovr{x_{1}}', \ovr{y_2}')$ is similar. Finally, $(\ovr{x_{1}}', \ovr{y_1}) \rcm (\ovr{x_{1}}', \ovr{y_1}_{\downarrow})$ by \eqref{ai-relation} and \eqref{ai-implies-rcm}.
\end{proof}

\section*{Acknowledgment}

The author is grateful for Louis Gordon from Susquehanna International Group for introducing him to the topic of stochastic orders, and for many helpful discussions and comments on this article. 

\bibliographystyle{abbrv}
\bibliography{ref3_snapshot}

\end{document}